\newcommand{\st}{\mid}
\newcommand{\cond}{\,|\,} 
\newcommand{\diff}{\mathrm d}
\let\Re\relax 
\DeclareMathOperator{\Re}{Re} 
\DeclareMathOperator{\E}{E}
\newcommand{\funt}{\tau} 
\newcommand{\funl}{\lambda} 
\newcommand{\funti}{T} 
\newcommand{\funli}{\Lambda} 
\newcommand{\funta}{\varphi} 
\newcommand{\probmax}{\rho} 
\newcommand{\ras}{\sigma}
\newcommand{\len}{\ell} 
\newcommand{\leni}{\ell} 
\newcommand{\tiles}{t} 
\newcommand{\isolr}{i^+}
\newcommand{\jsolr}{j^+}
\newcommand{\isoli}{i^\ast}
\newcommand{\jsoli}{j^\ast}
\newcommand{\genfun}{q}
\newcommand{\genvar}{s}
\newcommand{\roundterm}{r}
\newcommand{\mss}{M}
\newcommand{\funcirc}{\Omega}
\newcommand{\funcircesc}{\bar \Omega}
\newcommand{\fdp}{p}
\newcommand{\lra}{\alpha}
\newcommand{\lrb}{\beta}
\newcommand{\intervx}{I}
\newtheorem{proposition}{Proposition}
\newtheorem{theorem}{Theorem}
\newtheorem{corollary}{Corollary}
\renewcommand\theenumi{(\@roman\c@enumi)} 
\renewcommand\labelenumi{\theenumi} 
\begin{document}

\title{
On the number of tiles visited by a\\
line segment on a rectangular grid
}

\author[1]{Alex Arkhipov}
\author[2]{Luis Mendo}

\affil[1]{\small E-mail: \texttt{arkhipov@alum.mit.edu}}
\affil[2]{\small Universidad Polit\'ecnica de Madrid. E-mail:  \texttt{luis.mendo@upm.es}}


\maketitle

\begin{abstract}
Consider a line segment placed on a two-dimensional grid of rectangular tiles. This paper addresses the relationship between the length of the segment and the number of tiles it visits (i.e.~has intersection with). The square grid is also considered explicitly, as some of the specific problems studied are more tractable in that particular case. The segment position and orientation can be modelled as either deterministic or random. In the deterministic setting, the maximum possible number of visited tiles is characterized for a given length, and conversely, the infimum segment length needed to visit a desired number of tiles is analyzed. In the random setting, the average number of visited tiles and the probability of visiting the maximum number of tiles on a square grid are studied as a function of segment length. These questions are related to Buffon's needle problem and its extension by Laplace. 

\emph{Keywords:} Discrete geometry, Geometric probability, Rectangular lattice, Buffon's needle problem.

\emph{MSC2020:} 52C99, 60D05.
\end{abstract}

\section{Introduction}
\label{part: intro}

Given $a, b \in \mathbb R^+$, consider a grid on $\mathbb R^2$ formed by rectangular \emph{tiles} of width $a$ and height $b$. A line segment of length $\len \in \mathbb R^+$ is located on the plane with arbitrary position and orientation. The segment is said to \emph{visit} a tile if it intersects its interior.\footnote{
The definition uses the interior of the tile, excluding the border, to avoid uninteresting results such as a ``zero-length'' segment visiting (a vertex of) $4$ tiles.}

This paper studies the relationship between the length of the segment and the number of visited tiles. The motivation comes from the classical Buffon-Laplace needle problem (i.e.~a segment with random position and orientation on a rectangular grid), of which a modified version is considered, wherein the segment position and orientation are parameters that can be chosen to maximize the number of visited tiles. In addition, the probability of visiting that maximum number of tiles in the classical (random) setting is studied.

Specifically, two different settings are considered, which correspond to the segment position and orientation being deterministic or random, respectively. In the \emph{deterministic} case, the relevant questions are:
\begin{enumerate}
\renewcommand{\labelenumi}{(1\alph{enumi})}
\renewcommand{\theenumi}{(1\alph{enumi})}
\item
\label{prob: determ dir}
What is the maximum number of tiles that the segment can visit given its length?
\item
\label{prob: determ inv}
Conversely, what length should a segment have to visit a given number of tiles?
\end{enumerate}

In the \emph{random} setting, if the segment position and orientation are uniformly distributed (this will be precisely defined later),
\begin{enumerate}
\renewcommand{\labelenumi}{(2\alph{enumi})}
\renewcommand{\theenumi}{(2\alph{enumi})}
\item
\label{prob: rand ave}
What is the average number of tiles visited by a segment of a given length?
\item
\label{prob: rand prob}
How often does the random segment visit the maximum number of tiles?
\end{enumerate}

As an example of question~\ref{prob: determ dir}, consider $a=1.35$, $b=1$. A segment of unit length can be placed as shown in Figure~\ref{fig: examples} (left) to make it visit $3$ tiles. In fact, this is the maximum number for $\len=1$. The figure also illustrates that the solution for length $2.4$ is $5$ (center), and for $4.7$ it is $8$ (right).

\begin{figure}
\centering%
\includegraphics[width=.85\textwidth]{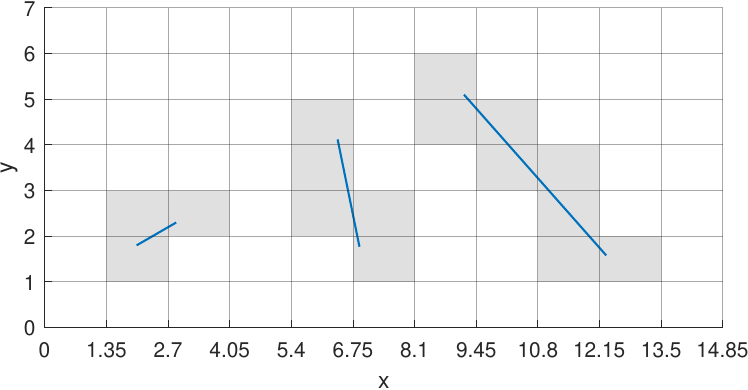}%
\caption{Examples for $a=1.35$, $b=1$; $\len=1$, $\len=2.4$ and $\len=4.7$
}%
\label{fig: examples}%
\end{figure}%

An equivalent formulation of the problem is obtained by allowing segments of length $\len$ \emph{or smaller}. The equivalence is clear from the fact that reducing the length cannot increase the number of visited tiles. Either of these formulations will be referred to as the \emph{direct} problem.

The \emph{inverse} problem~\ref{prob: determ inv} is, given $\tiles \in \mathbb N$, to determine the infimum length of all segments that visit at least $\tiles$ tiles. If the length can take any real value the infimum is not a minimum, because given any segment it can be shortened by some small amount without changing the number of visited tiles. This is a consequence of the interior of each tile being an open set.

The direct and inverse problems are closely related. Namely, if $\len$ is the infimum of all lengths that allow visiting at least $\tiles$ tiles (inverse problem), $\tiles$ is the maximum number of tiles that can be visited with lengths slightly greater than $\len$ (direct problem).

To address the remaining two questions, the notion of a \emph{random} segment of a given length needs to be precisely defined. This is done as follows. By symmetry, one endpoint of the segment can be assumed to lie in a fixed, reference tile. The position of this endpoint is \emph{uniformly} distributed on the tile. The segment orientation has a \emph{uniform} distribution on $[0,2\pi)$, and is \emph{independent} of the endpoint position. Solving the problem~\ref{prob: rand ave} of how many tiles the segment visits on average also answers, as will be seen, the inverse question (segment length to visit a given number of tiles on average). A natural, related problem~\ref{prob: rand prob} is with what probability the segment visits the maximum number of tiles. 


The questions studied in this paper are related to Buffon's needle problem and Laplace's extension of it, as stated at the outset. Buffon's original problem considers a plane with vertical lines a distance $a$ apart. A needle is placed on the plane with uniformly random position\footnote{It suffices to define position using a horizontal coordinate modulo $a$, for which a uniform distribution can be defined.} and orientation, and the probability of the needle crossing a line is studied. For a needle of length $\len = a$, this probability is $2/\pi$ (and hence repeated trials of this experiment can be used to estimate $\pi$). This is generalized in \cite{Ramaley69} to $2\len /(\pi a)$ for the expected number of crossings of a needle with arbitrary length.

The Buffon-Laplace needle problem \cite[section~1.1]{Mathai99} considers a needle randomly placed on a grid of rectangular tiles of width $a$ and height $b$. The number of visited tiles equals one plus the number of crossings almost surely. The probability of the needle staying within a single tile is computed in \cite{Arnow94} for the case where $\len < \min\{a,b\}$. One of the problems considered in this paper, as mentioned above, is the complementary question~\ref{prob: rand prob} of the probability that the needle visits the maximum number of tiles possible for its length.

The rest of the paper is organized as follows. Fundamental results are presented in \S\ref{part: fund results}, which form the basis of both the deterministic and probabilistic analyses. The direct and inverse problems for a deterministic segment are considered in \S\ref{part: max}, first for arbitrary grids and then for a square grid. The analysis for the random segment is carried out in \S\ref{part: rand}. The average number of tiles is computed for arbitrary grids, and the probability that the segment visits the maximum number of tiles is obtained for a square grid.

The symbols $\lfloor x \rfloor$ and $\lceil x \rceil$ respectively denote the floor (rounding down) and ceiling (rounding up) operations. The functions $\arcsin x$ and $\arccos x$ are defined as their principal branches in the usual way. In particular, the output angles are in $[0, \pi/2]$ for $x \in [0,1]$.

\section{Fundamentals}
\label{part: fund results}

For a grid with horizontal spacing $a$ and vertical spacing $b$, lines $x = ka$ or $y = kb$ with $k \in \mathbb Z$ will be called \emph{grid lines}. A \emph{tile} is delimited by two pairs of consecutive horizontal and vertical grid lines. The intersection points of horizontal and vertical grid lines will be called \emph{grid points}. These correspond to vertices of the tiles.

Every segment has an associated \emph{discrete bounding rectangle}, which is the minimum-size rectangle that is formed by grid lines and contains the segment. More specifically, if the segment has endpoints $(x_1,y_1)$, $(x_2,y_2) \in \mathbb R^2$, its discrete bounding rectangle has lower-left and upper-right corners respectively given as
\begin{align*}
& (\lfloor\min\{x_1, x_2\}/a\rfloor a, \lfloor\min\{y_1,y_2\}/b\rfloor b), \\
& (\lceil\max\{x_1, x_2\}/a \rceil a, \lceil\max\{y_1,y_2\}/b \rceil b).
\end{align*}
The dimensions of the discrete bounding rectangle, \emph{normalized} to the tile width and height respectively, are two integer numbers $i$, $j$. Two examples are illustrated in Figure~\ref{fig: discrete bounding rectangle and touched tiles}, both with $i=5$, $j=4$. All tiles visited by the segment are contained in the discrete bounding rectangle. Note also that the rectangle can have $i=0$ or $j=0$ if the segment coincides with part of a grid line.

\begin{figure}
\centering%
\subfigure[The segment does not pass through any interior grid points]{%
\label{fig: S_nogridpoints}%
\includegraphics[width=.49\textwidth]{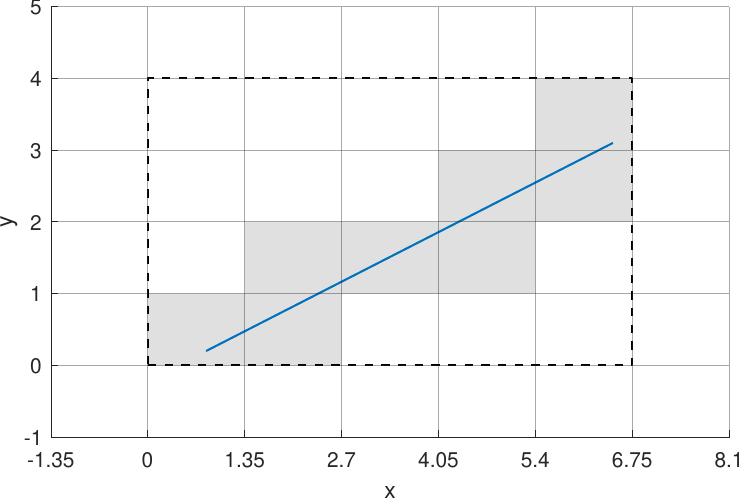}%
}\hfill%
\subfigure[The segment passes through some interior grid points]{%
\label{fig: S_gridpoints}%
\includegraphics[width=.49\textwidth]{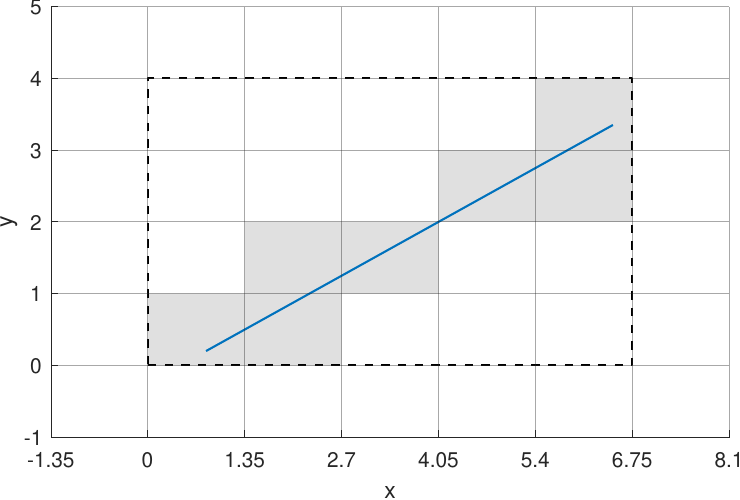}%
}%
\caption{Discrete bounding rectangle and visited tiles
}%
\label{fig: discrete bounding rectangle and touched tiles}%
\end{figure}%

\begin{proposition}
\label{prop: i+j-1}
Consider an arbitrary segment, and let $i, j$ respectively denote the normalized width and height of its discrete bounding rectangle. If $i, j \geq 1$, the number of tiles visited by the segment is at most $i+j-1$. This bound is attained if and only if the segment does not pass through any grid point in the interior of the rectangle.
\end{proposition}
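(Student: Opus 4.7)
The plan is to parametrize the segment from one endpoint to the other and count its tile transitions. Because a line segment and a tile are both convex, their intersection is a subsegment and no tile is visited twice; thus the number of visited tiles equals $1+N$, where $N$ is the number of points in the interior of the segment at which the enclosing tile changes. The task reduces to counting these transitions in terms of $i$, $j$, and the number of interior grid points on the segment.

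Assume first that the segment is neither horizontal nor vertical. Translating so the bounding rectangle has corner at the origin, the definition of the discrete bounding rectangle forces the two endpoints to have $x$-coordinates in $[0,a)$ and $((i-1)a,ia]$ respectively; since $x$ varies monotonically and continuously along the segment, each interior vertical grid line $x=ka$ with $1\leq k\leq i-1$ is hit at exactly one interior point of the segment, giving $i-1$ vertical crossings, and a symmetric argument yields $j-1$ horizontal ones. Let $p$ denote the number of interior grid points on the segment, which by definition is exactly the number of vertical-horizontal crossing coincidences; by inclusion-exclusion the segment then has $(i-1)+(j-1)-p$ distinct interior crossing points.

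The key step is to verify that each such crossing point contributes exactly one tile transition. A simple crossing of a single grid line at a non-grid-point moves the segment into the tile sharing that edge. At an interior grid point the segment arrives from one of the four incident tiles and leaves into the diagonally opposite one, while the two off-diagonal tiles are \emph{not} visited since the segment meets each of them only at the grid point itself, which lies on their boundary and not in their interior; a grid-point passage therefore still counts as a single transition. Hence $N=(i-1)+(j-1)-p$, so the number of visited tiles is $i+j-1-p\leq i+j-1$, with equality if and only if $p=0$, as claimed. Segments parallel to a grid axis must have $i=1$ or $j=1$, contain no interior grid points, and give $i+j-1$ visited tiles by direct counting. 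I expect the main delicate point to be the grid-point claim: verifying rigorously that a grid-point passage produces exactly one transition, equivalently that the two off-diagonal tiles are not visited.
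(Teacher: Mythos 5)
Your proof is correct and follows essentially the same route as the paper's: both track the sequence of tiles along the segment, observe that each interior grid-line crossing yields one tile transition, and that a passage through an interior grid point merges a horizontal and a vertical crossing into a single (diagonal) transition. Your crossing-count with inclusion--exclusion is just a more explicit rendering of the paper's staircase-path argument, with the added benefit of producing the exact count $i+j-1-p$ rather than only the bound and the equality criterion.
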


\begin{proof}
The segment visits, by definition, two tiles in opposite corners of the discrete bounding rectangle. It can be assumed, without loss of generality, that those tiles are in the lower-left and upper-right corners of the rectangle, as in Figure~\ref{fig: discrete bounding rectangle and touched tiles}. The visited tiles can be thought of as following a path within the discrete bounding rectangle. Starting at the lower-left tile, the next tile can be the one to the left, the one above, or the one above and to the left. The latter case occurs if and only if the segment passes through the grid point between those two tiles.

Since the segment follows a straight line, once it ``leaves'' a row of tiles in its path from the lower-left to the upper-right corner, it can never visit any more tiles from that row. The same observation applies to the columns.

This implies that the maximum number of visited tiles is $i+j-1$, which is attained if and only if the segment avoids all grid points in the interior of the discrete bounding rectangle, as in Figure~\ref{fig: S_nogridpoints}. Note that grid points at the corners of the rectangle do not count for this; and that the segment cannot pass through any other grid points on the rectangle border, because that would imply $i=0$ or $j=0$. Figure~\ref{fig: S_gridpoints} illustrates a case where the maximum is not attained.
\end{proof}

\begin{proposition}
\label{prop: len ineq i j}
Consider $a, b, \len \in \mathbb R^+$ and $i, j \in \mathbb N$, $i, j \geq 2$ arbitrary.
\begin{enumerate}
\item
\label{prop: len ineq i j: ineqs}
The following inequalities hold for any segment with length $\len$ whose discrete bounding rectangle has normalized dimensions $i, j$:
\begin{align}
\label{eq: len > sqrt}
\len &> \sqrt{(i-2)^2 a^2 + (j-2)^2 b^2}, \\
\label{eq: len leq sqrt}
\len &\leq \sqrt{i^2 a^2 + j^2 b^2}.
\end{align}
\item
\label{prop: len ineq i j: exist}
Conversely, if $\len$, $i$, $j$ satisfy \eqref{eq: len > sqrt} and \eqref{eq: len leq sqrt} there exists a segment of length $\len$ whose discrete bounding rectangle has normalized dimensions $i$ and $j$.
\item
\label{prop: len ineq i j: ineq, exist}
There is a segment of length not exceeding $\len$ that has a discrete bounding rectangle with normalized dimensions $i, j$ if and only if \eqref{eq: len > sqrt} holds.
\end{enumerate}
\end{proposition}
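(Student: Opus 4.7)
The plan is to pass everything through the horizontal and vertical extents of the segment, $\Delta_x := |x_2 - x_1|$ and $\Delta_y := |y_2 - y_1|$. The workhorse estimate, proved first, is that if a segment has discrete bounding rectangle of normalized dimensions $i, j$, then
\[
(i-2)a < \Delta_x \le ia, \qquad (j-2)b < \Delta_y \le jb.
\]
The upper bounds come immediately from the segment lying inside a rectangle of width $ia$ and height $jb$. The lower bounds follow from the elementary facts $\lceil y\rceil < y+1$ and $\lfloor y\rfloor > y-1$, which upon subtraction give $i = \lceil x_{\max}/a\rceil - \lfloor x_{\min}/a\rfloor < \Delta_x/a + 2$, and analogously in $y$.

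Part~\ref{prop: len ineq i j: ineqs} is then immediate: squaring and summing yields \eqref{eq: len > sqrt} and \eqref{eq: len leq sqrt} from $\ell^2 = \Delta_x^2 + \Delta_y^2$, with strictness in \eqref{eq: len > sqrt} preserved because $\Delta_x > (i-2)a \ge 0$ and $\Delta_y > (j-2)b \ge 0$.

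For part~\ref{prop: len ineq i j: exist} I would build the segment in two stages. First, exhibit a pair $(\Delta_x,\Delta_y)$ in the half-open box $B := \bigl((i-2)a,\,ia\bigr]\times\bigl((j-2)b,\,jb\bigr]$ with $\Delta_x^2+\Delta_y^2 = \ell^2$. The map $p\mapsto|p|$ on the connected set $B$ has infimum $\sqrt{(i-2)^2a^2+(j-2)^2b^2}$ (approached but not attained, at the excluded lower-left corner) and supremum $\sqrt{i^2a^2+j^2b^2}$ (attained at the included upper-right corner); by the intermediate value theorem its image is exactly the half-open interval $\bigl(\sqrt{(i-2)^2a^2+(j-2)^2b^2},\,\sqrt{i^2a^2+j^2b^2}\bigr]$, which contains $\ell$ by hypothesis. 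Second, pick the horizontal position $x_{\min} = sa$ with $s\in[0,1)$ so that the normalized width equals $i$; writing the requirement as $\lceil s + \Delta_x/a\rceil = i$, i.e.\ $s \in \bigl(i-1 - \Delta_x/a,\; i - \Delta_x/a\bigr]$, a short case analysis shows this interval meets $[0,1)$ precisely when $\Delta_x \in ((i-2)a, ia]$. The vertical placement is analogous.

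Part~\ref{prop: len ineq i j: ineq, exist} is then a short packaging argument: ``only if'' applies part~\ref{prop: len ineq i j: ineqs} to a realizing segment of length $\ell'\le\ell$; ``if'' picks any $\ell' \in \bigl(\sqrt{(i-2)^2a^2+(j-2)^2b^2},\;\min\{\ell,\sqrt{i^2a^2+j^2b^2}\}\bigr]$, which is a nonempty interval whose elements satisfy the hypothesis of part~\ref{prop: len ineq i j: exist}, producing a segment of length $\ell'\le\ell$. The subtlety I anticipate throughout is the strict-versus-weak bookkeeping: $B$ is open on the sides facing the origin and closed on the opposite sides, matching exactly the asymmetry between \eqref{eq: len > sqrt} and \eqref{eq: len leq sqrt}, and this asymmetry must also propagate cleanly through the positioning step—particularly at the extreme value $\Delta_x = ia$, where the admissible $s$ is forced to $0$.
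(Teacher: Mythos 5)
Your proof is correct and follows essentially the same route as the paper, which argues that the two endpoints lie in (the interiors or outer edges of) the two opposite corner tiles of the bounding rectangle and reads the length bounds and the existence claim off a figure; your displacement box $\bigl((i-2)a,\,ia\bigr]\times\bigl((j-2)b,\,jb\bigr]$ is exactly the set of possible displacement vectors between those two corner tiles. The difference is only one of rigor: you supply the floor/ceiling inequalities, the intermediate value theorem argument on the half-open box, and the explicit positioning step that the paper leaves implicit, and your strict-versus-weak bookkeeping (including the use of $i,j\ge 2$ so that squaring preserves the strict lower bounds) is handled correctly throughout.
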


\begin{proof}
\ref*{prop: len ineq i j: ineqs} The inequalities follow from the fact that the segment endpoints lie in the interiors or on the outer edges of two tiles in opposite corners of the discrete bounding rectangle. This is illustrated in Figure~\ref{fig: len ineq i j} for two specific $(i,j)$ pairs. For each $(i,j)$, segments are shown with lengths close to either of the two bounds. Note that inequality \eqref{eq: len > sqrt} is valid even for $i=2$, $j=2$, in which case it reduces to $\len>0$.

\ref*{prop: len ineq i j: exist} For $a$, $b$, $\len$, $i$, $j$ satisfying the two inequalities, a segment of length $\len$ can be found that has its endpoints in the interiors or on the outer edges of the two shaded tiles of a rectangle with normalized dimensions $i$ and $j$ (see Figure~\ref{fig: len ineq i j}), which is thus the discrete bounding rectangle of that segment.
\begin{figure}
\centering%
\subfigure[$i=4$, $j=3$]{%
\label{fig: len ineq 4 3}%
\includegraphics[width=.49\textwidth]{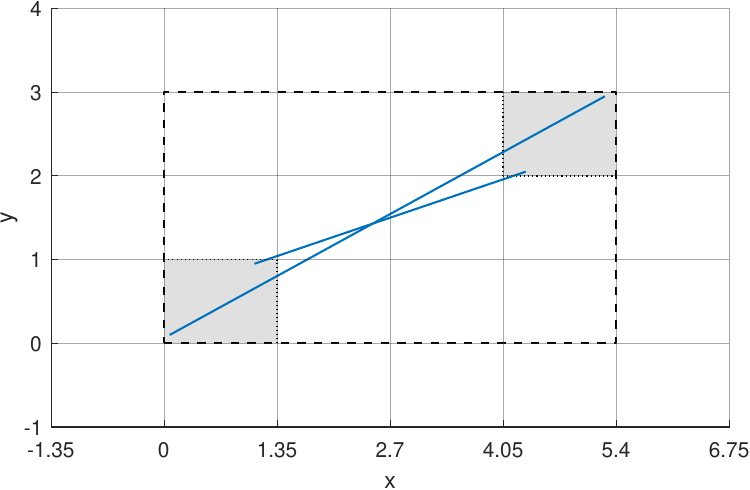}%
}\hfill%
\subfigure[$i=4$, $j=2$]{%
\label{fig: len ineq 4 2}%
\includegraphics[width=.49\textwidth]{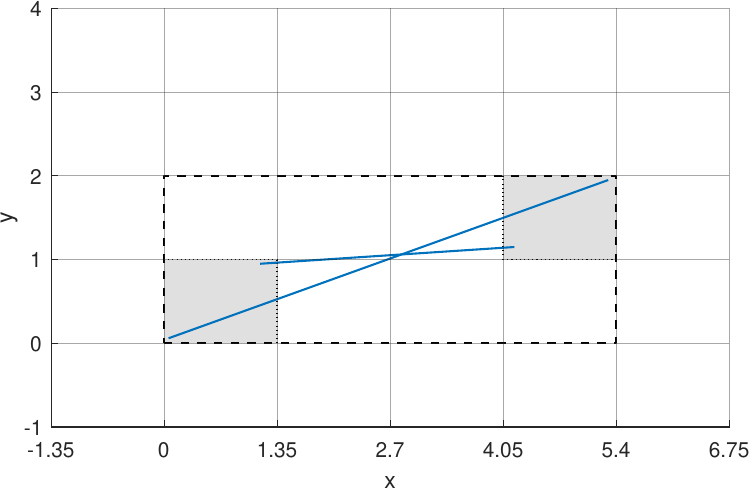}%
}%
\caption{Relationship between segment length $\len$ and dimensions $i$, $j$ of the discrete bounding rectangle
}%
\label{fig: len ineq i j}
\end{figure}%

\ref*{prop: len ineq i j: ineq, exist} ``\eqref{eq: len > sqrt} $\Rightarrow$ there is a segment\ldots'': Assume that \eqref{eq: len > sqrt} holds. It is always possible to choose a length equal to or smaller than $\len$ such that both \eqref{eq: len > sqrt} and \eqref{eq: len leq sqrt} hold. The result follows, for that length, from part~\ref{prop: len ineq i j: exist}.

``There is a segment\ldots $\Rightarrow$ \eqref{eq: len > sqrt}'': Assume that a segment exists with length $\len' \leq \len$ and with a discrete bounding rectangle of normalized dimensions $i$, $j$. From part~\ref{prop: len ineq i j: ineqs} it follows that inequality \eqref{eq: len > sqrt} holds for the length $\len'$, and thus for $\len$.
\end{proof}

Consider the problem of maximizing the number of visited tiles for a given length. According to Proposition~\ref{prop: i+j-1}, the position and orientation of the segment should be chosen to obtain $i+j-1$ as large as possible, where $i$ and $j$ are the normalized dimensions of its discrete bounding rectangle. On the other hand, Proposition~\ref{prop: len ineq i j} restricts the $i, j$ values that can be achieved with a given length. A relevant question is: are there any $(i,j)$ pairs that can be disregarded irrespective of the length $\len$? In other words, what is the ``smallest'' subset of $\mathbb N^2$ such that the $(i,j)$ pair that maximizes the number of tiles for any given length can always be found within that subset?

For instance, it is clear from Figure~\ref{fig: examples} that segment orientations near the vertical or horizontal directions (resulting in $i=1$ with large $j$, or $j=1$ with large $i$) will not maximize the number of visited tiles, and thus the corresponding $(i,j)$ pairs can be discarded. On the other hand, the set of optimal $(i,j)$ pairs must contain one such pair for each possible value of $i+j-1$, so that the set can produce that value as the solution (maximum number of visited tiles) for certain lengths. It is insightful to examine two specific examples (Figure~\ref{fig: ijLS}) before giving an explicit formula for the coordinates of the optimal pairs.

Consider $a=b=1$ first. This is illustrated in Figure~\ref{fig: ijLS_1}. Note that in this and in the next figures the axes represent $ia$ and $jb$ (not $i$ and $j$). In this graph, each dashed diagonal line contains points $(ia,jb)$ with the same $i+j-1$; and the lower bound \eqref{eq: len > sqrt} corresponds to an arc centered at $(2a,2b)$.

\begin{figure}
\centering%
\subfigure[$a=b$]{%
\label{fig: ijLS_1}
\includegraphics[width=.7\textwidth]{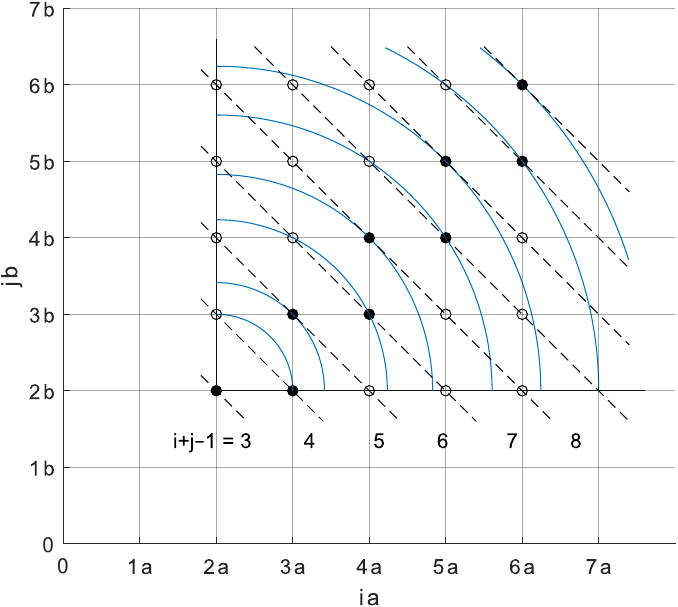}%
}\\%
\centering%
\subfigure[$a=1.35b$]{%
\label{fig: ijLS_1p35}
\includegraphics[width=.7\textwidth]{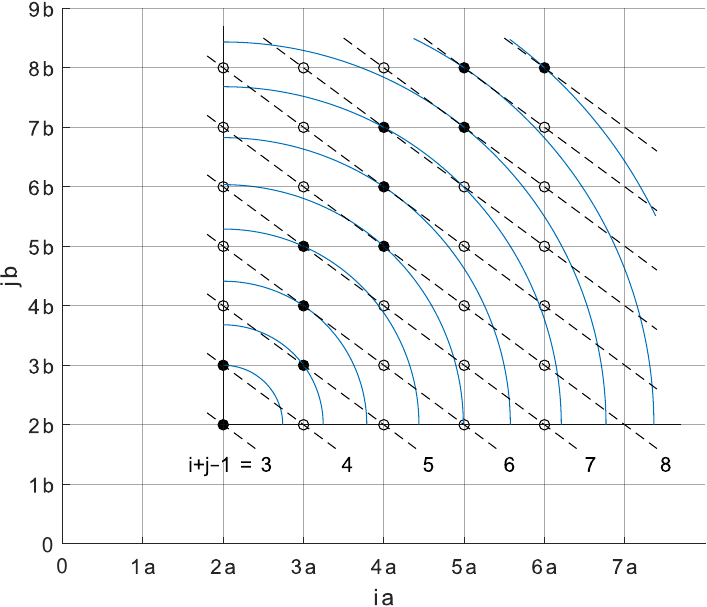}%
}%
\caption{Relationship of segment length and number of visited tiles with the width and height of the discrete bounding rectangle
}%
\label{fig: ijLS}%
\end{figure}%

For a given $\len \in \mathbb R^+$, the $(i,j)$ pairs that can be achieved with segments of length not exceeding $\len$ are, by Proposition~\ref{prop: len ineq i j}.\ref{prop: len ineq i j: ineq, exist}, those that satisfy \eqref{eq: len > sqrt}. Thus for a given value of $i+j-1$ the optimal $(i,j)$ pair (the one that can be achieved with the smallest length) is determined by the condition that the point $(ia,jb)$ minimizes the distance to $(2a,2b)$. Such pairs are depicted as filled circles in the figure, and the arcs represent the inequality \eqref{eq: len > sqrt} for each of the optimal pairs.

Conversely, given a length $\len$, the maximum number of visited tiles will be achieved with one of these pairs, namely the pair $(i,j)$ such that $(ia,jb)$ is on the uppermost (or rightmost) diagonal line while still being contained in the circle of radius $\len$ centered at $(2a,2b)$.

The optimal pairs in this specific case have the form $(i,i)$ or $(i,i-1)$, as seen in the figure. Due to symmetry, any pair $(i,i-1)$ could be replaced by $(i-1,i)$. This illustrates that the set of optimal pairs is not unique in general. 

As a second example, consider $a=1.35$, $b=1$. This is depicted in Figure~\ref{fig: ijLS_1p35}. Again, the optimal pair $(i,j)$ for each diagonal is that for which the point $(ia,jb)$ is closest to $(2a,2b)$; but in this case the $i$, $j$ coordinates of these pairs do not follow a rule as simple as in the previous example.

The following proposition gives an explicit method to obtain a set of optimal $(i,j)$ pairs. This set will be denoted as $\mss = \{(i_3,j_3), (i_4,j_4), \ldots\}$, where the pair $(i_\tiles,j_\tiles)$ corresponds to $i+j-1 = \tiles$.

\begin{proposition}
\label{prop: min suff set, form}
Given $a, b \in \mathbb R^+$, a set of optimal pairs $\mss = \{(i_3,j_3), (i_4,j_4), \ldots\}$ can be obtained as
\begin{align}
\label{eq: min suff set, form, i}
i_\tiles &= \left\lfloor \frac{(\tiles-3) b^2}{a^2+b^2} + \frac 5 2 \right\rfloor, \\
\label{eq: min suff set, form, j}
j_\tiles &= \left\lceil  \frac{(\tiles-3) a^2}{a^2+b^2} + \frac 3 2 \right\rceil,
\end{align}
where $i_\tiles + j_\tiles-1 = \tiles$. All pairs $(i_\tiles,j_\tiles)$ are strictly below the line
\begin{equation}
\label{eq: upper bound, line}
j = \frac{i a^2}{b^2} - \frac{3a^2}{2b^2} + \frac 5 2,
\end{equation}
and above or on the line
\begin{equation}
\label{eq: lower bound, line}
j = \frac{i a^2}{b^2} - \frac{5a^2}{2b^2} + \frac 3 2.
\end{equation}
\end{proposition}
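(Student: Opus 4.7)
My plan is to recast the problem as minimizing a quadratic in a single variable, find the real optimum, round to the nearest integer lattice point on the diagonal $i+j=\tiles+1$, and then verify that the formulas \eqref{eq: min suff set, form, i}--\eqref{eq: min suff set, form, j} and the line bounds \eqref{eq: upper bound, line}--\eqref{eq: lower bound, line} are precisely the rounding prescription and its standard two-sided estimates.

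First, by the discussion preceding the proposition, an ``optimal pair'' on the diagonal $i+j-1=\tiles$ is one that minimizes the squared distance $(i-2)^2a^2+(j-2)^2b^2$ subject to $(i,j)\in\mathbb N^2$ and $i+j=\tiles+1$. Substituting $j=\tiles+1-i$ turns the objective into a strictly convex quadratic in the single real variable $i$, whose unconstrained minimum is attained at
\[
i^\ast = 2+\frac{(\tiles-3)b^2}{a^2+b^2},\qquad j^\ast = 2+\frac{(\tiles-3)a^2}{a^2+b^2}.
\]
Because the objective is a convex quadratic in $i$, the integer minimizer on the diagonal is simply the nearest integer to $i^\ast$, i.e.\ $i_\tiles=\lfloor i^\ast+1/2\rfloor$, which is exactly \eqref{eq: min suff set, form, i}. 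The constraint then forces $j_\tiles=\tiles+1-i_\tiles$, giving $i_\tiles+j_\tiles-1=\tiles$ as claimed; using the identity $\lceil N-x\rceil=N-\lfloor x\rfloor$ for integer $N$ (applied with $N=\tiles+1$ and $x=i^\ast+1/2$), together with the identity $\tiles+1/2-i^\ast=j^\ast-1/2$, one recovers \eqref{eq: min suff set, form, j}.

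Finally, the two line conditions are just the standard bounds on rounding rewritten. Substituting $j_\tiles=\tiles+1-i_\tiles$ into \eqref{eq: upper bound, line} and using the explicit form of $i^\ast$, the strict inequality ``$(i_\tiles,j_\tiles)$ lies below \eqref{eq: upper bound, line}'' reduces algebraically to $i_\tiles>i^\ast-1/2$, which holds because $\lfloor x+1/2\rfloor>x-1/2$ for every $x\in\mathbb R$; symmetrically, lying on or above \eqref{eq: lower bound, line} reduces to $i_\tiles\le i^\ast+1/2$, which follows from $\lfloor x+1/2\rfloor\le x+1/2$. Geometrically these two parallel lines are the loci at perpendicular distance $\tfrac12\sqrt{a^2+b^2}$ from the line of slope $a^2/b^2$ through $(2,2)$, which is exactly the ``rounding band'' around the perpendicular on which $(i^\ast,j^\ast)$ sits. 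The only real step requiring care, and hence the main bookkeeping obstacle, is the algebraic conversion of each line condition into the corresponding rounding inequality; this is a short, routine calculation once $i^\ast$ is substituted explicitly.
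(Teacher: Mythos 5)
Your proof is correct and follows essentially the same route as the paper: compute the continuous minimizer $(\isolr,\jsolr)$ of $(i-2)^2a^2+(j-2)^2b^2$ on the diagonal $i+j=\tiles+1$, round to the nearest integer point on that diagonal, and translate the two-sided rounding estimates $\isolr-\tfrac12<i_\tiles\le\isolr+\tfrac12$ into the two bounding lines. Your one-variable convexity argument for why the integer minimizer is the nearest integer to $\isolr$ is a slightly cleaner justification of the rounding step than the paper's "whichever of the two neighbours is closer," but the substance is identical.
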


\begin{proof}
For each $\tiles \geq 3$, the pair $(i_\tiles,j_\tiles)$ should be chosen as that on the line $i+j-1=\tiles$ which minimizes $(i-2)^2 a^2 + (j-2)^2 b^2$. This allows maximizing the sum $i+j-1$, and thus the number of visited tiles, for a given length restriction; or visiting a specified number of tiles with lengths as small as possible.

Consider, for the moment, $i$, $j$ as if they were real-valued variables, and denote $x=ia$, $y=jb$. The line $i+j-1 = \tiles$ then becomes
\begin{equation}
\label{eq: line 1}
\frac x a + \frac y b = \tiles + 1,
\end{equation}
and $(i-2)^2 a^2 + (j-2)^2 b^2$ is expressed as $(x-2a)^2+(y-2b)^2$. The point minimizing this quadratic function along the line \eqref{eq: line 1} is the intersection of the latter with the perpendicular line passing through $(2a,2b)$,
\begin{equation}
\label{eq: line 2}
y = \frac {a (x-2a)} b + 2b.
\end{equation}
An example with $a=1.35$, $b=1$, $\tiles=7$ is shown in Figure~\ref{fig: min suff set, form}, where \eqref{eq: line 2} is depicted as a solid line. Solving the system of equations \eqref{eq: line 1} and \eqref{eq: line 2} gives 
\begin{align}
\frac x a - 2 &= \frac{(\tiles-3)b^2}{a^2+b^2}, \\
\frac y b - 2 &= \frac{(\tiles-3)a^2}{a^2+b^2}.
\end{align}
In terms of the real-valued variables $i$, $j$, the solution $(\isolr, \jsolr)$ is thus
\begin{align}
\label{eq: prop: min suff set, form: isolr}
\isolr &= \frac{(\tiles-3)b^2}{a^2+b^2} + 2, \\
\label{eq: prop: min suff set, form: jsolr}
\jsolr &= \frac{(\tiles-3)a^2}{a^2+b^2} + 2.
\end{align}
The corresponding point $(\isolr a, \jsolr b)$ is shown in Figure~\ref{fig: min suff set, form} with a square marker.

\begin{figure}%
\centering%
\includegraphics[width=.7\textwidth]{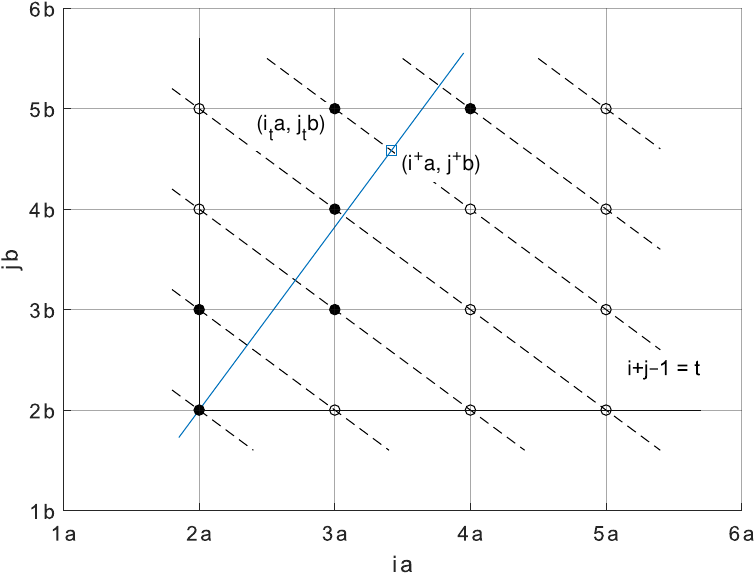}%
\caption{Obtaining $(\isolr, \jsolr)$ and $(i_\tiles, j_\tiles)$ in Proposition~\ref{prop: min suff set, form}. Example with $a=1.35$, $b=1$, $\tiles=7$%
}%
\label{fig: min suff set, form}%
\end{figure}%

The variables $i$, $j$ are actually limited to integer values. The pair $(i_\tiles, j_\tiles) \in \mathbb Z^2$ that minimizes $(i-2)^2 a^2 + (j-2)^2 b^2$ along the line $i+j-1=\tiles$ is either $(\lceil \isolr \rceil, \lfloor \jsolr \rfloor)$ or $(\lfloor \isolr \rfloor, \lceil \jsolr \rceil)$, whichever gives $(i_\tiles a, j_\tiles b)$ closest to $(\isolr a, \jsolr b)$, as illustrated in Figure~\ref{fig: min suff set, form}. In case of a tie the first of the two options is (arbitrarily) chosen. This can be expressed as
\begin{align}
\label{eq: prop: min suff set, form: isoli}
i_\tiles &= \left\lfloor \isolr + \frac 1 2 \right\rfloor, \\
\label{eq: prop: min suff set, form: jsoli}
j_\tiles &= \left\lceil \jsolr - \frac 1 2 \right\rceil,
\end{align}
which corresponds to rounding $\isolr$ and $\jsolr$ to the closest integers, with ties resolved in opposite directions. Combining \eqref{eq: prop: min suff set, form: isolr}--\eqref{eq: prop: min suff set, form: jsoli}
yields \eqref{eq: min suff set, form, i} and \eqref{eq: min suff set, form, j}.

From \eqref{eq: prop: min suff set, form: isolr} and \eqref{eq: prop: min suff set, form: isoli},
\begin{equation}
\label{eq: prop: min suff set, form: isoli ineq}
\frac{(\tiles-3)b^2}{a^2+b^2} + \frac 3 2 < i_\tiles \leq \frac{(\tiles-3)b^2}{a^2+b^2} + \frac 5 2,
\end{equation}
and similarly, from \eqref{eq: prop: min suff set, form: jsolr} and \eqref{eq: prop: min suff set, form: jsoli},
\begin{equation}
\label{eq: prop: min suff set, form: jsoli ineq}
\frac{(\tiles-3)a^2}{a^2+b^2} + \frac 3 2 \leq j_\tiles < \frac{(\tiles-3)a^2}{a^2+b^2} + \frac 5 2.
\end{equation}
Considering the first inequality in \eqref{eq: prop: min suff set, form: isoli ineq} and the second in \eqref{eq: prop: min suff set, form: jsoli ineq} as equalities and eliminating $\tiles$ gives \eqref{eq: upper bound, line}. The pair $(i_\tiles, j_\tiles)$ is strictly below the line \eqref{eq: upper bound, line} because the used inequalities are strict. Similarly, \eqref{eq: lower bound, line} results from the second inequality in \eqref{eq: prop: min suff set, form: isoli ineq} and the first in \eqref{eq: prop: min suff set, form: jsoli ineq}, and the fact that those inequalities are not strict implies that the bound \eqref{eq: lower bound, line} can actually be attained.
\end{proof}

The bounding lines in Proposition~\ref{prop: min suff set, form} are shown in Figure~\ref{fig: pairs_bounds}, using three different pairs of grid parameters $a$, $b$ as examples. Given $(i_\tiles,j_\tiles) \in \mss$, the next pair $(i_{\tiles+1},j_{\tiles+1})$ is obtained by incrementing $j$ if that results in a point below \eqref{eq: upper bound, line}. Else $i$ is incremented instead, and the new pair is guaranteed to be above or on \eqref{eq: lower bound, line}.

\begin{figure}
\centering%
\subfigure[$a = 1$, $b = 1$]{%
\label{fig: pairs_bounds_1}%
\includegraphics[height=.23\textheight]{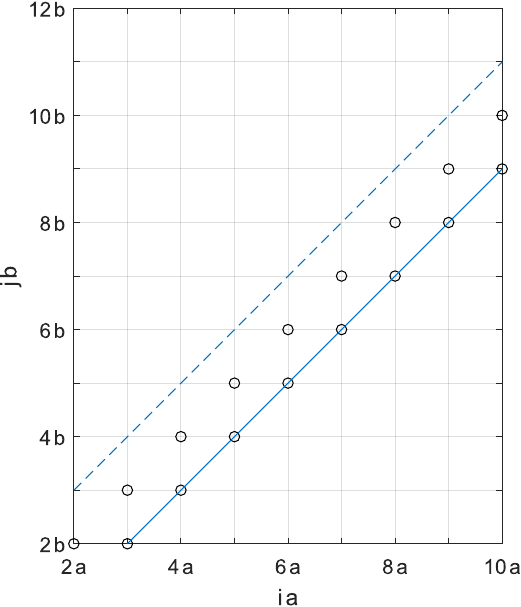}
}\hfill%
\subfigure[$a = 1.35$, $b = 1$]{%
\label{fig: pairs_bounds_1p35}%
\includegraphics[height=.23\textheight]{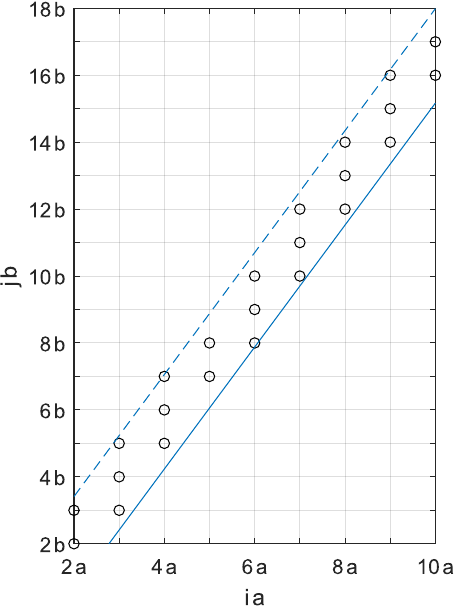}
}\hfill%
\subfigure[$a = \sqrt{2}$, $b = 1$]{%
\label{fig: pairs_bounds_sqrt2}%
\includegraphics[height=.23\textheight]{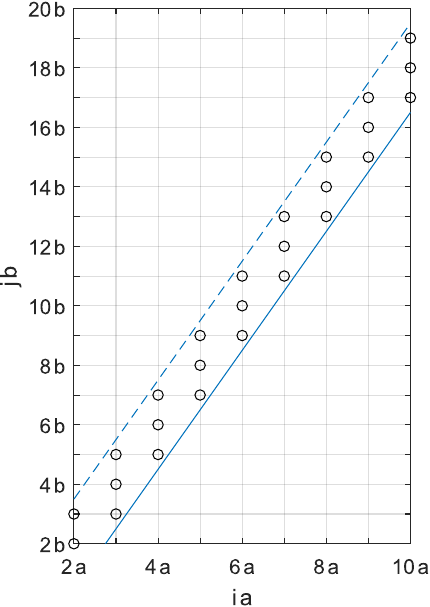}
}%
\caption{Set of optimal pairs $\mss$, and bounding lines
}%
\label{fig: pairs_bounds}%
\end{figure}%
 
For $a^2/b^2$ arbitrary, the number of pairs in $\mss$ with the same $i$, or with the same $j$, is in general irregular, because the lines \eqref{eq: upper bound, line} and \eqref{eq: lower bound, line} do not follow a ``natural'' direction of the grid. This happens for instance in Figure~\ref{fig: pairs_bounds_1p35}, where the number of pairs for each $i$ equals either $2$ or $3$ without a clear pattern.\footnote{Strictly, there is a periodic pattern whenever $a^2/b^2$ is rational, which is the case in Figure~\ref{fig: pairs_bounds_1p35}. However, the pattern is not easily discernible unless $a^2/b^2$ is a ratio of small numbers.}
On the other hand, a simple pattern arises when $a^2/b^2$ or $b^2/a^2$ is a natural number, as seen in Figures~\ref{fig: pairs_bounds_1} and \ref{fig: pairs_bounds_sqrt2}.

A segment whose discrete bounding rectangle has normalized width $i$ and height $j$ is oriented with approximate slope $jb/(ia)$ with respect to the $x$ axis (see Figure~\ref{fig: len ineq i j}); and this approximation becomes better for greater segment lengths. From \eqref{eq: upper bound, line} and \eqref{eq: lower bound, line} it can be seen that the pairs $(i,j) \in \mss$ have $j/i \approx a^2/b^2$ for large $i, j$. Therefore the optimal slope for long segments is approximately $a/b$. This substantiates the intuition that to maximize the number of visited tiles, the segment direction should strike a balance between achieving a small perceived ``length'' of the tile on one hand, and crossing both horizontal and vertical grid lines on the other hand.

\section{Deterministic segment: direct and inverse problems}
\label{part: max}

The direct and inverse problems defined in \S\ref{part: intro}, considering the segment position and orientation as deterministic, are addressed in this section. The general case for rectangular grids with real-valued segment lengths is analyzed first, in \S\ref{part: max: arbitrary grid, real lengths}. The square grid with real-valued segment lengths is addressed in \S\ref{part: max: unit square grid, real lengths}, as it allows a specialized formula for the direct problem. Lastly, the analysis of a unit square grid with integer-valued lengths is presented in \S\ref{part: max: unit square grid, integer lengths}.

\subsection{Arbitrary grid with real-valued lengths}
\label{part: max: arbitrary grid, real lengths}

Given a grid with parameters $a, b \in \mathbb R^+$, the maximum number $\tiles$ of visited tiles for an arbitrary real-valued length $\len$ can be represented by a function $\funt: \mathbb R^+ \to \mathbb N$ such that $\tiles = \funt(\len)$. Similarly, for the inverse problem a function $\funl: \mathbb N \to \mathbb R^+$ can be defined such that $\funl(\tiles)$ gives the infimum length of all segments that visit at least $\tiles$ tiles. Clearly, these two functions are related as
\begin{align}
\label{eq: funt funl}
\funt(\len) &= \max \{\tiles \in \mathbb N \st \funl(\tiles)<\len\},\\
\label{eq: funl funt}
\funl(\tiles) &= \inf\{\len \in \mathbb R^+ \st \funt(\len) \geq \tiles\}.
\end{align}

For arbitrary $a, b \in \mathbb R^+$, the functions $\funt$ and $\funl$ can be computed using an iterative procedure, which exploits the fact that the pairs $(i_3,j_3), (i_4,j_4), \ldots$ of the set $\mss$ are sorted by increasing $i+j-1$, and also by increasing $(i-2)^2 a^2 + (j-2)^2 b^2$. Namely, for $\funt$ the following method yields the solution: generate successive pairs to find the last one, $(i_\tiles,j_\tiles)$, that satisfies \eqref{eq: len > sqrt}; then $\funt(\len) = \tiles$. For $\funl$ the analogous method gives a direct formula. In addition, it is possible to obtain a direct formula also for $\funt$ using a different approach. These formulas are given in Theorems~\ref{theo: funt, form} and \ref{theo: funl, form}.

\begin{theorem}
\label{theo: funt, form}
For $a, b \in \mathbb R^+$, $a \geq b$ and $\len \in \mathbb R^+$,
\begin{equation}
\label{eq: theo: funt, form; funt}
\funt(\len) = \isoli+\jsoli-1
\end{equation}
with
\begin{align}
\label{eq: theo: funt, form; i}
\isoli &= \left\lceil \frac 3 2 + \frac b a \Re \sqrt{\frac{\len^2}{a^2+b^2}-\frac 1 4} \right\rceil, \\
\label{eq: theo: funt, form; j}
\jsoli &= \left\lceil 1 + \frac{\sqrt{\len^2-(\isoli-2)^2a^2}}{b} \right\rceil.
\end{align}
The function $\funt$ is piecewise constant and left-continuous, with unit-height jumps. A jump occurs at $\len$ if and only if $\len = \funl(\tiles)$ for some $\tiles \in \mathbb N$, $\tiles \geq 4$; and then $\funt(\len) = \tiles-1$, $\lim_{\delta \rightarrow 0+} \funt(\len+\delta) = \tiles$.
\end{theorem}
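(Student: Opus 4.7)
The plan is to combine Propositions~\ref{prop: i+j-1}--\ref{prop: min suff set, form} to reduce $\funt(\len)$ to a lattice-point optimization on an ellipse, identify $(\isoli, \jsoli)$ as the optimizer, and then derive the jump structure from the inverse relationship with $\funl$.

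\emph{Reduction and achievability.} Proposition~\ref{prop: i+j-1} together with Proposition~\ref{prop: len ineq i j}.\ref{prop: len ineq i j: ineq, exist} gives
\begin{equation*}
\funt(\len) = \max\bigl\{i + j - 1 : (i,j) \in \mathbb N^2,\ i, j \geq 2,\ (i-2)^2 a^2 + (j-2)^2 b^2 < \len^2\bigr\};
\end{equation*}
the axis-parallel cases $i = 1$ or $j = 1$ visit at most $\lceil \len / b \rceil$ tiles and (for $a \geq b$) are dominated by the candidate $(2, \lceil 1 + \len/b \rceil)$. That $(\isoli, \jsoli)$ itself lies in the open ellipse follows by unfolding the ceiling in \eqref{eq: theo: funt, form; j} to $(\jsoli - 2) b < \sqrt{\len^2 - (\isoli - 2)^2 a^2}$, provided $(\isoli - 2)^2 a^2 \leq \len^2$. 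For $\isoli = 2$ this is trivial, and for $\isoli \geq 3$ squaring the lower bound $(b/a)\sqrt{\len^2/(a^2+b^2) - 1/4} > \isoli - 5/2$ implicit in \eqref{eq: theo: funt, form; i} yields
\begin{equation*}
\len^2 > (a^2+b^2)\bigl[(a^2/b^2)(\isoli - 5/2)^2 + 1/4\bigr] \geq (\isoli - 2)^2 a^2,
\end{equation*}
the last inequality reducing to $(2a^2 \isoli - 5a^2 - b^2)^2 \geq 0$.

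\emph{Optimality (the main obstacle).} I need to show no $(i, j) \in \mathbb Z^2$ with $i + j > \isoli + \jsoli$ lies in the open ellipse. By Proposition~\ref{prop: min suff set, form}, the minimum of $(i-2)^2 a^2 + (j-2)^2 b^2$ over integer points on the line $i+j-1 = \tiles$ is attained at $(i_\tiles, j_\tiles) \in \mss$, so it suffices to push $(i_{\isoli+\jsoli}, j_{\isoli+\jsoli})$ outside the ellipse. My plan is first to verify $(\isoli, \jsoli) = (i_{\isoli+\jsoli-1}, j_{\isoli+\jsoli-1})$ by substituting \eqref{eq: theo: funt, form; i}--\eqref{eq: theo: funt, form; j} into \eqref{eq: min suff set, form, i}--\eqref{eq: min suff set, form, j} and comparing with \eqref{eq: prop: min suff set, form: isoli ineq}--\eqref{eq: prop: min suff set, form: jsoli ineq}, then to derive $(i_{\isoli+\jsoli} - 2)^2 a^2 + (j_{\isoli+\jsoli} - 2)^2 b^2 \geq \len^2$ from the upper ceiling bounds $(b/a)\sqrt{\len^2/(a^2+b^2) - 1/4} \leq \isoli - 3/2$ and $\sqrt{\len^2 - (\isoli-2)^2 a^2} \leq (\jsoli - 1) b$. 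The cleanest route uses the identity
\begin{equation*}
(i_\tiles - 2)^2 a^2 + (j_\tiles - 2)^2 b^2 = \frac{(\tiles - 3)^2 a^2 b^2}{a^2 + b^2} + \delta_\tiles^2 (a^2 + b^2),
\end{equation*}
where $\delta_\tiles = (i_\tiles - 2) - (\tiles - 3) b^2 / (a^2 + b^2)$ is the signed rounding error; combining this with the two ceiling bounds produces the required inequality by a manipulation analogous to the achievability step.

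\emph{Jump structure.} From \eqref{eq: funt funl}, $\funt$ is the left-continuous integer step-function inverse of $\funl$: it is constant on $(\funl(\tiles), \funl(\tiles+1)]$ with value $\tiles$ and jumps from $\tiles - 1$ to $\tiles$ at $\len = \funl(\tiles)$. Unit-height jumps follow from the strict monotonicity of $\tiles \mapsto \funl(\tiles) = \sqrt{(i_\tiles - 2)^2 a^2 + (j_\tiles - 2)^2 b^2}$: by the paragraph after Proposition~\ref{prop: min suff set, form}, incrementing $\tiles$ changes exactly one of $i_\tiles, j_\tiles$ by $+1$, strictly enlarging the squared distance. The restriction $\tiles \geq 4$ reflects $\funl(3) = 0 \notin \mathbb R^+$.
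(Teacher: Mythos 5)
Your reduction of $\funt(\len)$ to a lattice optimization over the open disk $(i-2)^2a^2+(j-2)^2b^2<\len^2$, your achievability computation for $(\isoli,\jsoli)$ (the algebra reducing to $(2a^2\isoli-5a^2-b^2)^2\geq 0$ checks out), and your treatment of the jump structure via the pseudo-inverse relation with $\funl$ are all sound and consistent with the paper's strategy. The genuine gap is in the optimality step: the identity $(\isoli,\jsoli)=(i_{\isoli+\jsoli-1},j_{\isoli+\jsoli-1})$ that you propose to ``first verify'' is false in general. The formulas \eqref{eq: theo: funt, form; i}--\eqref{eq: theo: funt, form; j} do not always reproduce the $\mss$ pair on the diagonal $i+j-1=\tiles$: for $a=1.35$, $b=1$, $\len=3.7$ they give $(\isoli,\jsoli)=(4,4)$, whereas $\mss$ contains $(i_7,j_7)=(3,5)$ (this is exactly the situation of Figure~\ref{fig: ijLS_1p35_detail_notin}). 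So the substitution-and-comparison you describe cannot succeed, and the subsequent derivation---which is keyed to the decomposition identity evaluated at the $\mss$ minimizer with rounding error $\delta_\tiles$ read off from $\isoli$---does not directly apply to the pair your formulas actually produce.

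The target you name afterwards is the right one: it does suffice to push $(i_{\isoli+\jsoli},j_{\isoli+\jsoli})$ outside the disk, and this does not require $(\isoli,\jsoli)\in\mss$. But to get there you must handle the two cases separately, as the paper does: when $(\isoli,\jsoli)\notin\mss$ one shows that $(\isoli-1,\jsoli+1)\in\mss$ is achievable with the same sum (so the value $\isoli+\jsoli-1$ is still correct), and that the $\mss$ pair on the next diagonal lies outside the disk because $\isoli$ was chosen as the largest integer strictly below the intersection abscissa $\isolr$ of the line \eqref{eq: lower bound, line} with the circle. Without that case analysis (or an equivalent repair), the optimality half of \eqref{eq: theo: funt, form; funt} is not established. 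You should also make explicit the small-$\len$ regimes where the radicand in \eqref{eq: theo: funt, form; i} is negative or the intersection point has $\jsolr<2$, which the paper disposes of by showing $\isoli=2$ is forced there; your sketch treats $\isoli=2$ only as a trivial subcase of achievability, not of optimality.
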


\begin{proof}
The approach is similar to that used in the proof of Proposition~\ref{prop: min suff set, form}. First, the intersection point $(\isolr a, \jsolr b)$, $\isolr, \jsolr \geq 2$ between the line defined by \eqref{eq: lower bound, line} and the arc centered at $(2a,2b)$ with radius $\len$ is computed, if it exists. Then, based on either the values $\isolr, \jsolr$ or the non-existence of the intersection point, a pair of integer values $(\isoli, \jsoli)$ is obtained that maximizes the $i+j-1$ sum that can be achieved with segments of length up to $\len$.

As will be seen, the obtained pair $(\isoli,\jsoli)$ may belong to the set $\mss$ defined by Proposition~\ref{prop: min suff set, form} or not. However, in either case $\funt(\len)$ is given by $\isoli+\jsoli-1$. The two possibilities are respectively illustrated in Figures~\ref{fig: ijLS_1p35_detail_in} and \ref{fig: ijLS_1p35_detail_notin} for $a=1.35$, $b=1$. In each case, the displayed arc is centered at $(2a,2b)$ and has radius $\len$. The inner region defined by the arc contains all $(ia,jb)$ points such that the pair $(i,j)$ is achievable according to Proposition~\ref{prop: len ineq i j}.\ref{prop: len ineq i j: ineq, exist}. As in previous figures, filled circles represent points $(ia,jb)$ such that $(i,j) \in \mss$. The solid line is the bound \eqref{eq: lower bound, line}. The intersection point $(\isolr a, \jsolr b)$ is displayed with a square marker.

\begin{figure}
\centering%
\subfigure[$\len = 3.1$: $(\isoli,\jsoli) \in \mss$]{%
\label{fig: ijLS_1p35_detail_in}%
\includegraphics[width=.7\textwidth]{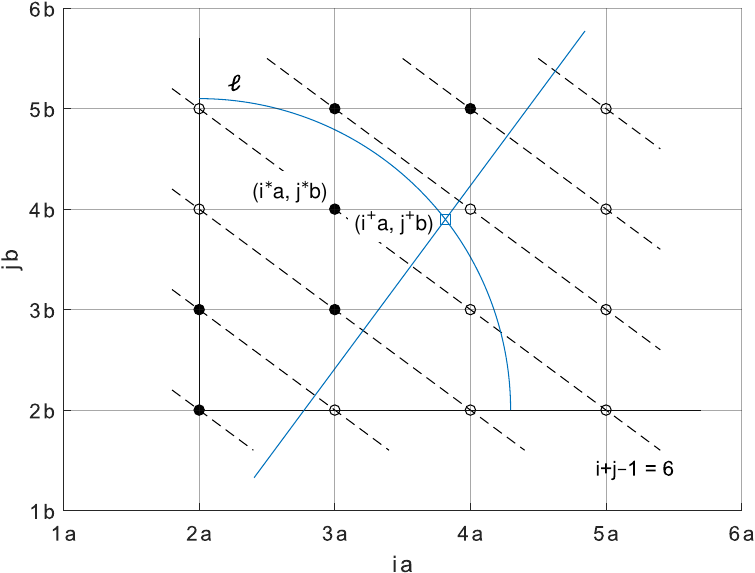}%
}\\
\subfigure[$\len = 3.7$: $(\isoli,\jsoli) \notin \mss$]{%
\label{fig: ijLS_1p35_detail_notin}%
\includegraphics[width=.7\textwidth]{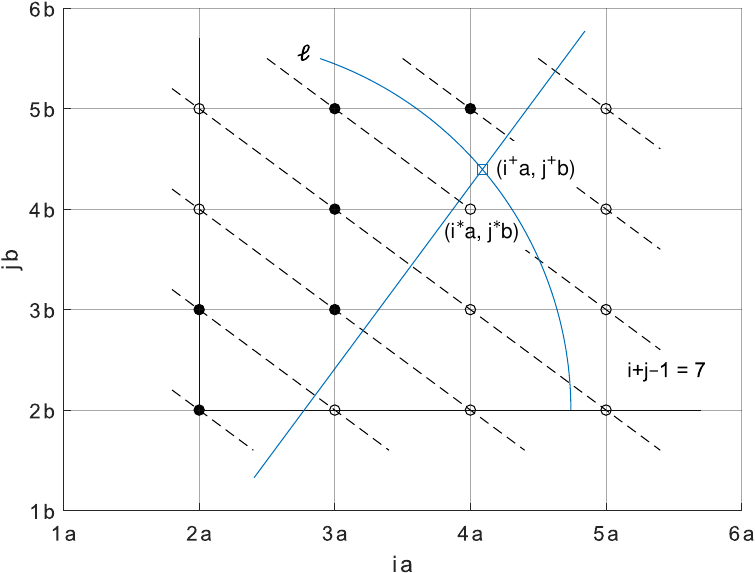}%
}%
\caption{Obtaining $(\isoli,\jsoli)$ in Theorem~\ref{theo: funt, form}. Examples with $a=1.35$, $b=1$%
}%
\label{fig: ijLS_1p35_detail}%
\end{figure}%

The pair $(\isolr, \jsolr)$ results from solving the equation system
\begin{align}
\label{eq: icont jcont 1}
(\isolr-2)^2 a^2 + (\jsolr-2)^2 b^2 &= \len^2, \\
\label{eq: icont jcont 2}
\jsolr = \frac{\isolr a^2}{b^2} - \frac{5a^2}{2b^2} + \frac 3 2.
\end{align}
Expressing these equations in terms of new variables $\isolr-5/2$ and $\jsolr-3/2$, the solutions are easily found to be
\begin{align}
\label{eq: isolr sol}
\isolr &= \frac 5 2 \pm \frac b a \sqrt{\frac{\len^2}{a^2+b^2} - \frac 1 4}, \\
\label{eq: jsolr sol}
\jsolr &= \frac 3 2 \pm \frac a b \sqrt{\frac{\len^2}{a^2+b^2} - \frac 1 4},
\end{align}
where the same sign (positive or negative) should be used in the two equations. This can yield zero, one or two real-valued solution pairs $(\isolr, \jsolr)$, which respectively corresponds to the solid line in Figure~\ref{fig: ijLS_1p35_detail} being exterior, tangent or secant to the circle (the figure depicts the latter situation).

A solution pair $(\isolr, \jsolr)$ given by \eqref{eq: isolr sol} and \eqref{eq: jsolr sol} is meaningful only if it is real-valued with $\isolr, \jsolr \geq 2$. This solution, if it exists, is always associated with the positive sign in those expressions. Since $a \geq b$, it is easily seen that $\jsolr \geq 2$ implies $\isolr \geq 2$, and thus it suffices to check the former condition. Three cases need to be distinguished: there are no real-valued solution pairs $(\isolr, \jsolr)$; there are one or two but none of them has $\jsolr \geq 2$; or there are one or two and and one of them satisfies that inequality. These cases correspond to different ranges of $\len$, as seen next.

For $\len < \sqrt{a^2+b^2}/2$, \eqref{eq: isolr sol} and \eqref{eq: jsolr sol} are not real-valued. Since $a \geq b$, from the inequality $\len < \sqrt{a^2+b^2}/2$ it follows that $\len < a/\sqrt{2} < a$. This means that any achievable $(i,j)$ pair, if any, will have $i=2$. Thus in this case $\isoli$ should be set to $2$.

For $\sqrt{a^2+b^2}/2 \leq \len < (a^2+b^2) / (2a)$, \eqref{eq: isolr sol} and \eqref{eq: jsolr sol} give either two real-valued solutions or one real-valued double solution for $(\isolr, \jsolr)$, with $\jsolr < 2$. This in turn implies, according to \eqref{eq: icont jcont 2}, that $\isolr < (b^2/a^2+5)/2 \leq 3$ for $a \geq b$. Thus only pairs with $i=2$ are achievable again for $\len$ in this range, and therefore $\isoli$ must be $2$.

Lastly, for $\len \geq (a^2+b^2) / (2a)$ the expressions \eqref{eq: isolr sol} and \eqref{eq: jsolr sol} with positive sign give $\isolr, \jsolr \geq 2$, and $\isoli$ should be taken as the greatest integer less than $\isolr$, i.e.~$\lceil \isolr \rceil-1$.
 
The three cases are unified, as can be easily checked, by taking the real part of the positive-sign version of \eqref{eq: isolr sol} and computing $\isoli = \lceil \isolr \rceil-1$. This yields \eqref{eq: theo: funt, form; i}. Once $\isoli$ is known, \eqref{eq: theo: funt, form; j} computes $\jsoli$ as the greatest integer such that $(\isoli a,\jsoli b)$ is within the circle with center $(2a,2b)$ and radius $\len$. This ensures that $(\isoli,\jsoli)$ is achievable with lengths less than $\len$.

To see that $\funt(\len) = \isoli+\jsoli-1$, the two situations stated at the outset need to be considered separately. The first possibility is that $(\isoli,\jsoli) \in \mss$ (upper part of Figure~\ref{fig: ijLS_1p35_detail}). Then, by construction  $(\isoli,\jsoli)$ maximizes $i+j-1$ among all achievable pairs of $\mss$, and is therefore optimal.

The second possibility is that $(\isoli,\jsoli) \notin \mss$ (Figure~\ref{fig: ijLS_1p35_detail_notin}). This happens when the pair from $\mss$ that has $i = \isoli$ ($(4,5)$ in the figure) is outside the circle, i.e.~it would require a length greater than $\len$. The selected $(\isoli,\jsoli)$ ($(4,4)$ in the figure), however, has the same $i+j-1$ sum as the pair from $\mss$ that ``should'' be used, which is $(\isoli-1,\jsoli+1)$ ($(3,5)$ in the figure); and therefore gives the same result. This is always the case, because  $(\isoli,\jsoli+1) \in \mss$ (it is above or on the bounding line) and $(\isoli,\jsoli) \notin \mss$ (it is below the line), and due to how $\mss$ has been constructed, this implies that $(\isoli-1,\jsoli+1) \in \mss$ and $(\isoli-1,\jsoli+k) \notin \mss$ for $k =2, 3, \ldots$. It follows that $(\isoli,\jsoli)$ is achievable and maximizes $i+j-1$, and thus $\isoli+\jsoli-1$ is the desired solution.

Therefore, regardless of whether $(\isoli,\jsoli)$ is in $\mss$ or not, \eqref{eq: theo: funt, form; i} and \eqref{eq: theo: funt, form; j} give $\isoli+\jsoli-1$ equal to $\funt(\len)$. This establishes \eqref{eq: theo: funt, form; funt}.

Interestingly, for the specific case that $a^2/b^2 = 2k-1$, $k \in \mathbb N$ the lower bounding line \eqref{eq: lower bound, line} becomes $j = (2k-1)i - 5k + 4$, which gives integer $j$ for integer $i$. This means that for each $i$ there is a pair $(i,j) \in \mss$ that is on that line (see for example Figure~\ref{fig: pairs_bounds_1}), and the case $(\isoli,\jsoli) \notin \mss$ never occurs.

As for the properties of $\funt$, it stems from \eqref{eq: theo: funt, form; funt}--\eqref{eq: theo: funt, form; j} that this function is piecewise constant and left-continuous. From the procedure described in the previous paragraphs for obtaining $(\isoli, \jsoli)$ it is clear that $\isoli+\jsoli-1$ increases in steps of $1$ when $\len$ is increased continuously; that is, $\funt$ has jumps of unit height.

Consider an arbitrary $\len$ such that for some $\tiles \in \mathbb N$, $\tiles \geq 4$
\begin{equation}
\label{eq: theo funt, form: prop 1}
\funl(\tiles)=\len.
\end{equation}
To see that $\funt$ has a jump at $\len$, assume for the sake of contradiction that $\funt$ is continuous at that point. Therefore $\funt$ is constant on an interval containing that point, which implies that $\funt(\len-\epsilon) = \tiles = \funt(\len+\epsilon)$ for some $\epsilon > 0$. This means that there exists a segment with length $\len-\epsilon$ that visits $\tiles$ tiles, and thus $\funl(\tiles) \leq \len-\epsilon < \len$, in contradiction with \eqref{eq: theo funt, form: prop 1}. Therefore $\funt$ is discontinuous (from the right) at $\len$. By definition of $\funl$, from \eqref{eq: theo funt, form: prop 1} it follows that
\begin{equation}
\label{eq: theo funt, form: prop 2}
\funt(\len) < \tiles
\end{equation}
and there exists $\epsilon > 0$ such that $\funt(\len+\delta) = \tiles$ for $0 < \delta < \epsilon$. This implies that
\begin{equation}
\label{eq: theo funt, form: prop 3}
\lim_{\delta \rightarrow 0+} \funt(\len+\delta) = \tiles,
\end{equation}
that is, $\funt$ has a jump at $\len$. In addition, since the jump has unit height, it stems from \eqref{eq: theo funt, form: prop 2} and \eqref{eq: theo funt, form: prop 3}  that $\funt(\len) = \tiles-1$.

Conversely, assume that $\funt$ has a jump from $\tiles-1$ to $\tiles$ at some $\len \in \mathbb R^+$. This means that \eqref{eq: theo funt, form: prop 2} and \eqref{eq: theo funt, form: prop 3} hold for those $\tiles$ and $\len$. From \eqref{eq: theo funt, form: prop 2} it follows that  $\funl(\tiles) \geq \len$. On the other hand, \eqref{eq: theo funt, form: prop 3} implies that $\funl(\tiles) \leq \len$. Thus $\funl(\tiles)=\len$.
\end{proof}

Although Theorem~\ref{theo: funt, form} requires $a \geq b$, the result could obviously be applied for $a < b$ by swapping the values of $a$ and $b$.

\begin{theorem}
\label{theo: funl, form}
For $a, b \in \mathbb R^+$ and $\tiles \in \mathbb N$,
\begin{equation}
\label{eq: theo: funl, form; funl}
\funl(\tiles) = \sqrt{(\isoli-2)^2 a^2 + (\jsoli-2)^2 b^2}
\end{equation}
with
\begin{align}
\label{eq: theo: funl, form; isolr}
\isolr &= \max\left\{ \frac{(\tiles-3) b^2}{a^2+b^2}, 0 \right\} + 2, \\
\label{eq: theo: funl, form; jsolr}
\jsolr &= \max\left\{ \frac{(\tiles-3) a^2}{a^2+b^2}, 0 \right\} + 2, \\
\label{eq: theo: funl, form; isoli}
\isoli &= \left\lfloor \isolr + \frac 1 2 \right\rfloor, \\
\label{eq: theo: funl, form; jsoli}
\jsoli &= \left\lceil \jsolr - \frac 1 2 \right\rceil.
\end{align}
Equivalently, for $\tiles \geq 3$,
\begin{equation}
\label{eq: theo: funl, form; funl 2}
\funl(\tiles) = \sqrt{\frac{(\tiles-3)^2 a^2 b^2}{a^2+b^2} + \roundterm^2(a^2+b^2)}
\end{equation}
with
\begin{equation}
\label{eq: roundterm}
\roundterm = |\isoli - \isolr| = |\jsoli - \jsolr|.
\end{equation}
This function is monotone increasing for $\tiles \geq 3$.
\end{theorem}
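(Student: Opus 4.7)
The strategy is to recognize that, for $\tiles \geq 3$, the pair $(\isoli, \jsoli)$ defined by the theorem coincides with the pair $(i_\tiles, j_\tiles) \in \mss$ from Proposition~\ref{prop: min suff set, form}; then Propositions~\ref{prop: i+j-1} and~\ref{prop: len ineq i j} identify \eqref{eq: theo: funl, form; funl} as the desired infimum. The boundary cases $\tiles \in \{1, 2\}$, the reformulation \eqref{eq: theo: funl, form; funl 2}, and strict monotonicity are then handled separately.

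For $\tiles \in \{1, 2\}$, the maxima in \eqref{eq: theo: funl, form; isolr}--\eqref{eq: theo: funl, form; jsolr} evaluate to $0$, giving $\isoli = \jsoli = 2$ and hence $\funl(\tiles) = 0$. This matches the definition: every segment visits at least one tile, and an arbitrarily short segment across a grid line visits two. For $\tiles = 3$ the formula likewise yields $0$; a suitably small segment near a grid point (but avoiding it) visits three tiles by Proposition~\ref{prop: i+j-1} with $i = j = 2$.

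For $\tiles \geq 4$ the maxima are attained by their non-zero arguments, so the expressions for $\isolr, \jsolr, \isoli, \jsoli$ coincide with those of Proposition~\ref{prop: min suff set, form}, giving $(\isoli, \jsoli) = (i_\tiles, j_\tiles)$. I would first establish that the sequence $(i_k - 2)^2 a^2 + (j_k - 2)^2 b^2$ is strictly increasing for $k \geq 3$: since $\isolr$ grows by $b^2/(a^2+b^2) \in (0,1)$ per unit increase in $k$, its rounded value $i_k$ changes by $0$ or $1$, and similarly for $j_k$; because $i_k + j_k$ must grow by $1$, exactly one coordinate increments, raising $(i_k - 2)^2 a^2 + (j_k - 2)^2 b^2$ by either $(2 i_k - 3) a^2 > 0$ or $(2 j_k - 3) b^2 > 0$. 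Combined with the characterization of $\mss$ as minimizing this quadratic on each diagonal $i + j - 1 = k$, it follows that $(i_\tiles, j_\tiles)$ attains the minimum over all integer pairs $(i, j)$ with $i, j \geq 2$ and $i + j - 1 \geq \tiles$. Proposition~\ref{prop: len ineq i j}.\ref{prop: len ineq i j: ineq, exist} then identifies the associated infimum length as $\sqrt{(\isoli - 2)^2 a^2 + (\jsoli - 2)^2 b^2}$, and Proposition~\ref{prop: i+j-1} confirms that $\tiles$ tiles can be visited, establishing \eqref{eq: theo: funl, form; funl}. Degenerate bounding rectangles with $i \leq 1$ or $j \leq 1$ force a length exceeding $(\tiles - 2)\min\{a, b\}$, which comfortably exceeds \eqref{eq: theo: funl, form; funl} for $\tiles \geq 4$.

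The alternative form \eqref{eq: theo: funl, form; funl 2} follows by writing $\isoli - 2 = (\isolr - 2) + (\isoli - \isolr)$ and similarly for $j$; since $\isolr + \jsolr = \tiles + 1 = \isoli + \jsoli$, one has $\isoli - \isolr = -(\jsoli - \jsolr)$, and on expanding the cross terms cancel because $(\isolr - 2) a^2 = (\jsolr - 2) b^2 = (\tiles - 3) a^2 b^2 / (a^2 + b^2)$. The remaining Pythagorean sum gives the stated expression, with $\roundterm = |\isoli - \isolr| = |\jsoli - \jsolr|$. Strict monotonicity of $\funl$ on $\{3, 4, \ldots\}$ is then inherited from that of the underlying sequence. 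The main obstacle I anticipate is the bookkeeping needed to exclude sub-optimal and degenerate bounding rectangles from the infimum; once those are dispatched, the rest is an assembly of the earlier results together with some algebra.
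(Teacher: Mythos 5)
Your proposal is correct and follows essentially the same route as the paper: identify $(\isoli,\jsoli)$ with the optimal pair $(i_\tiles,j_\tiles)$ of Proposition~\ref{prop: min suff set, form}, invoke Propositions~\ref{prop: i+j-1} and~\ref{prop: len ineq i j} to characterize the infimum, and obtain \eqref{eq: theo: funl, form; funl 2} from the orthogonal decomposition of $(\isoli-2,\jsoli-2)$ (your cross-term cancellation is exactly the paper's Pythagorean argument). You actually supply more detail than the paper on two points it leaves implicit — strict monotonicity of $(i_k-2)^2a^2+(j_k-2)^2b^2$ along $\mss$ and the exclusion of degenerate rectangles, the latter most cleanly dispatched by noting $\funl(\tiles)\leq(\tiles-3)\min\{a,b\}$ via the pair $(2,\tiles-1)$ or $(\tiles-1,2)$.
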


\begin{proof}
The $(i_\tiles,j_\tiles)$ pair in set $\mss$ defined in Proposition~\ref{prop: min suff set, form} corresponds to at most $\tiles$ visited tiles. By construction of this set, any segment that visits $\tiles$ tiles must have length greater than $\sqrt{(i_\tiles-2)^2a^2 + (j_\tiles-2)^2b^2}$. For $\tiles \geq 3$ the variables $\isoli$, $\jsoli$ computed in \eqref{eq: theo: funl, form; isolr}--\eqref{eq: theo: funl, form; jsoli}
coincide with $i_\tiles$, $j_\tiles$ as given by \eqref{eq: min suff set, form, i} and \eqref{eq: min suff set, form, j}, and therefore \eqref{eq: theo: funl, form; funl} gives the correct result. For $\tiles \in \{1, 2\}$ both \eqref{eq: theo: funl, form; isoli} and \eqref{eq: theo: funl, form; jsoli} equal $2$, and \eqref{eq: theo: funl, form; funl} gives $0$, which is again the correct result.

For $\tiles \geq 3$, the term $\sqrt{(i_\tiles-2)^2a^2 + (j_\tiles-2)^2b^2}$ can be interpreted geometrically as the distance between $(i_\tiles a, j_\tiles a)$ and $(2a, 2b)$. As can be seen with the help of Figure~\ref{fig: min suff set, form}, this distance is the hypotenuse of a right triangle whose other two sides extend from $(2a, 2b)$ to $(\isolr a, \jsolr b)$ and from $(\isolr a, \jsolr b)$ to $(i_\tiles a, j_\tiles a)$ respectively. Therefore,
\begin{equation}
\label{eq: distance as hypotenuse}
\begin{split}
(i_\tiles-2)^2a^2 + (j_\tiles-2)^2b^2 &= (\isolr-2)^2a^2 + (\jsolr-2)^2b^2 \\
&\quad + (i_\tiles-\isolr)^2a^2 + (j_\tiles-\jsolr)^2b^2.
\end{split}
\end{equation}
For $\tiles \geq 3$ it stems from \eqref{eq: theo: funl, form; isolr}
and \eqref{eq: theo: funl, form; jsolr} that
\begin{equation}
\label{eq: distance as hypotenuse, first leg}
(\isolr-2)^2a^2 + (\jsolr-2)^2b^2 =
\frac{(\tiles-3)^2 (a^2b^4+a^4b^2)}{(a^2+b^2)^2} =
\frac{(\tiles-3)^2 a^2b^2}{a^2+b^2}.
\end{equation}
The fact that both $(\isolr, \jsolr)$ and $(i_\tiles, j_\tiles)$ are on the line $i+j-1=\tiles$ implies that $\isolr+\jsolr = i_\tiles+ j_\tiles$. Taking into account that $i_\tiles=\isoli$ and $j_\tiles=\jsoli$, it stems that $\vert i_\tiles-\isolr \vert = \vert j_\tiles-\jsolr \vert = \roundterm$ with $\roundterm$ given by \eqref{eq: roundterm}. Consequently,
\begin{equation}
\label{eq: distance as hypotenuse, second leg}
(i_\tiles-\isolr)^2a^2 + (j_\tiles-\jsolr)^2b^2 = \roundterm^2 (a^2+b^2).
\end{equation}
Substituting \eqref{eq: distance as hypotenuse, first leg} and \eqref{eq: distance as hypotenuse, second leg} into \eqref{eq: distance as hypotenuse} and using \eqref{eq: theo: funl, form; funl} yields \eqref{eq: theo: funl, form; funl 2}.

The definition of $\funl$ implies that $\funl(\tiles) \geq \funl(\tiles-1)$ for any $\tiles \in \mathbb N$. On the other hand, by Theorem~\ref{theo: funt, form}, $\funt$ is piecewise constant and has a unit-height jump from $\tiles-1$ to $\tiles$ at $\funl(\tiles)$, $\tiles \in \mathbb N$, $\tiles \geq 4$. This implies that $\funl(\tiles) > \funl(\tiles-1)$ for $\tiles \geq 4$.
\end{proof}

The expression \eqref{eq: theo: funl, form; funl 2} allows a neat interpretation of $\funl(\tiles)$ (as stems from the arguments used in the proof of Theorem~\ref{theo: funl, form}). Namely, $\funl^2(\tiles)$ is the sum of the two terms that appear in that expression. The first term is the squared distance from $(2a, 2b)$ to the diagonal line defined by $i+j-1 = \tiles$; and the second term is additional squared distance incurred from rounding $i$, $j$ to integer values.

Theorems~\ref{theo: funt, form} and \ref{theo: funl, form} not only give the solutions $\funt(\len)$ and $\funl(\tiles)$ to the first two questions posed in \S\ref{part: intro}; they also provide a way to actually position a segment of length $\len$ or slightly greater than $\funl(\tiles)$, respectively, so that it visits $\funt(\len)$ or $\tiles$ tiles. Namely, for $\isoli$, $\jsoli$ computed as in the corresponding theorem, the segment should have its endpoints in the interiors of two tiles shifted $\isoli-1$ steps horizontally and $\jsoli-1$ steps vertically with respect to each other, with the exact position and orientation of the segment adjusted to avoid any grid points.

It is interesting to consider the following particular cases: $\len \gg a,b$; $a \gg b$; and $a=b$. Regarding the first, from \eqref{eq: theo: funt, form; funt}--\eqref{eq: theo: funt, form; j}
and from \eqref{eq: theo: funl, form; funl}--\eqref{eq: theo: funl, form; jsoli}
it is seen that for long segments the number of visited tiles and the segment length are approximately proportional, with
\begin{equation}
\label{eq: asympt slope}
\lim_{\len \rightarrow \infty} \frac{\funt(\len)}{\len}
= \lim_{\tiles \rightarrow \infty} \frac{\tiles}{\funl(\tiles)}
= \sqrt{1/a^2 + 1/b^2}.
\end{equation}

As for $a \gg b$, in this case the optimal discrete bounding rectangle has $\isoli = 2$, and $\jsoli$ as large as allowed by $\len$ (direct problem) or as required by $\tiles$ (inverse problem), corresponding to an almost vertical segment. In other words, for $a \gg b$ the length of the segment is best invested in increasing the number of tiles traversed vertically (but the segment should be slightly tilted to cross a vertical edge), and the asymptotic ratio \eqref{eq: asympt slope} is approximately $1/b$.

For $a=b$, either from symmetry considerations or particularizing the results in the above theorems it stems that the optimal orientation of the segment is close to $45^\circ$. This case will be dealt with in \S\ref{part: max: unit square grid, real lengths}, as it lends itself to simplified formulas.

Figure~\ref{fig: funt funl examples} shows the functions $\funt$ and $\funl$ for several pairs of grid parameters $a$, $b$. The graphs illustrate some of the observations of the previous paragraphs. Indeed, the asymptotic slope in Figure~\ref{fig: funt examples}, or the inverse of the asymptotic slope in Figure~\ref{fig: funl examples}, is approximately $\sqrt{2}$ for $a=b=1$; and it is roughly $1/b$ for the case $a=5,b=1$, or even for $a=5,b=1.5$ or $a=10,b=3$. Comparing the latter two cases it is also seen that scaling $a$, $b$ and $\len$ by the same factor does not alter $\funt(\len)$, and results in $\funl(\tiles)$ being scaled by that factor.

\begin{figure}
\centering%
\subfigure[Function $\funt$]{%
\label{fig: funt examples}%
\includegraphics[width=.49\textwidth]{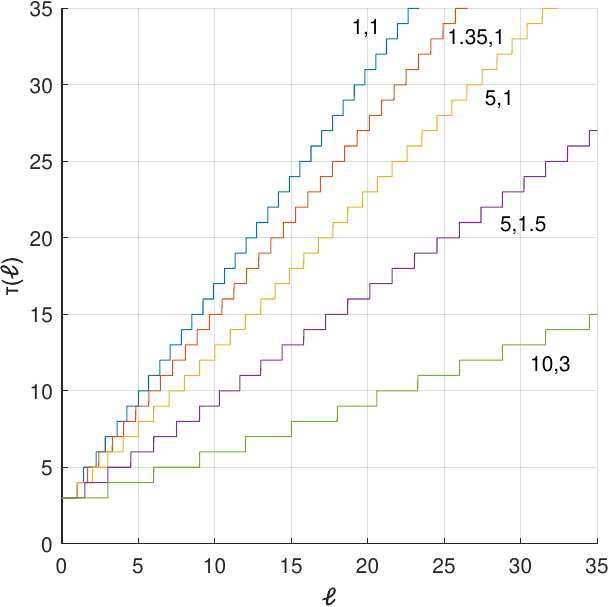}%
}\hfill%
\subfigure[Function $\funl$]{%
\label{fig: funl examples}%
\includegraphics[width=.49\textwidth]{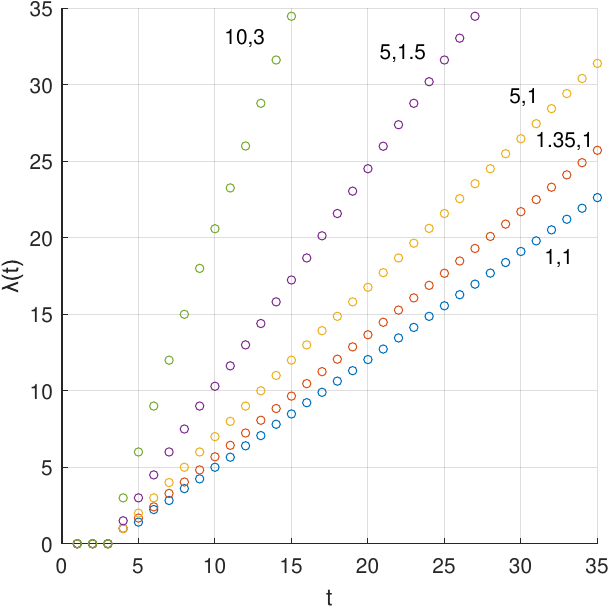}%
}%
\caption{Functions $\funt$ and $\funl$ for several pairs $a, b$
}%
\label{fig: funt funl examples}%
\end{figure}%

\subsection{Unit square grid with real-valued lengths}
\label{part: max: unit square grid, real lengths}

A square grid has $a=b$. For real-valued segment lengths it can be further assumed that $a=1$ (unit square grid). For $a \neq 1$ the expressions to be obtained are valid with $\len$ and $\funl(\tiles)$ replaced by $\len/a$ and $\funl(\tiles)/a$ respectively.

Particularizing the results in \S\ref{part: max: arbitrary grid, real lengths} to $a=b=1$ obviously yields simpler formulas.

\begin{corollary}
\label{cor: funt, sq, form}
For a unit square grid with $\len \in \mathbb R^+$,
\begin{equation}
\label{eq: cor: funt, sq, form}
\funt(\len) = \isoli+\jsoli-1
\end{equation}
with
\begin{align}
\label{eq: cor: funt, sq, form; i}
\isoli &= \left\lceil \frac 3 2 + \Re\sqrt{\frac{\len^2}{2} - \frac 1 4} \right\rceil, \\
\label{eq: cor: funt, sq, form; j}
\jsoli &= \left\lceil 1 + \sqrt{\len^2-(\isoli-2)^2} \right\rceil.
\end{align}
\end{corollary}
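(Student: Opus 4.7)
The plan is simply to specialize Theorem~\ref{theo: funt, form} to the case $a=b=1$. Since the corollary asserts exactly the same structural formula $\funt(\len) = \isoli+\jsoli-1$ with $\isoli$, $\jsoli$ defined as in the theorem, the only work is to verify that substituting $a = b = 1$ into \eqref{eq: theo: funt, form; i} and \eqref{eq: theo: funt, form; j} yields \eqref{eq: cor: funt, sq, form; i} and \eqref{eq: cor: funt, sq, form; j}.

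First I would note that the hypothesis $a \geq b$ required by Theorem~\ref{theo: funt, form} holds trivially (with equality) when $a=b=1$, so the theorem applies. Then I would compute: with $a=b=1$, one has $b/a = 1$ and $a^2+b^2 = 2$, so
\begin{equation*}
\isoli = \left\lceil \frac{3}{2} + \frac{b}{a} \Re\sqrt{\frac{\len^2}{a^2+b^2} - \frac{1}{4}} \right\rceil = \left\lceil \frac{3}{2} + \Re\sqrt{\frac{\len^2}{2} - \frac{1}{4}} \right\rceil,
\end{equation*}
which matches \eqref{eq: cor: funt, sq, form; i}. Likewise, since $b = 1$,
\begin{equation*}
\jsoli = \left\lceil 1 + \frac{\sqrt{\len^2 - (\isoli-2)^2 a^2}}{b} \right\rceil = \left\lceil 1 + \sqrt{\len^2 - (\isoli-2)^2} \right\rceil,
\end{equation*}
matching \eqref{eq: cor: funt, sq, form; j}. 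Combined with \eqref{eq: theo: funt, form; funt}, this gives \eqref{eq: cor: funt, sq, form}.

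There is no real obstacle here — the corollary is a direct specialization. The only minor thing worth flagging, for the reader's benefit, is that no separate argument is needed for small $\len$: the $\Re$ in \eqref{eq: cor: funt, sq, form; i} already handles the case $\len < 1/\sqrt{2}$ by rounding the complex square root to zero, exactly as in the general theorem.
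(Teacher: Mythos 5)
Your proposal is correct and matches the paper's own (implicit) argument: the corollary is obtained exactly by particularizing Theorem~\ref{theo: funt, form} to $a=b=1$, for which the hypothesis $a \geq b$ holds trivially, and the substitution into \eqref{eq: theo: funt, form; i} and \eqref{eq: theo: funt, form; j} is routine. Nothing further is needed.
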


\begin{corollary}
\label{theo: funl, sq, form}
For a unit square grid, and for $\tiles \in \mathbb N$,
\begin{equation}
\funl(\tiles) = \begin{cases}
\displaystyle
0 & \text{for } \tiles =1, 2 \\[1.4mm] 
\displaystyle
\frac{\tiles-3}{\sqrt{2}} & \text{for } \tiles \text{ odd, } \tiles \geq 3 \\[4.5mm] 
\displaystyle
\frac{\sqrt{(\tiles-4)^2+(\tiles-2)^2}} {2} & \text{for } \tiles \text{ even, } \tiles \geq 4,
\end{cases}
\end{equation}
or equivalently
\begin{equation}
\label{eq: theo: funt, sq, form; funl}
\funl(\tiles) = \begin{cases}
\displaystyle
0 & \text{for } \tiles =1, 2 \\[1.4mm]  
\displaystyle
\sqrt{\left\lceil \frac{(\tiles-3)^2} {2} \right\rceil} & \text{for } \tiles \geq 3.
\end{cases}
\end{equation}
\end{corollary}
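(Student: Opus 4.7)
The plan is to obtain this corollary as a direct specialization of Theorem~\ref{theo: funl, form} to $a=b=1$, then collapse the resulting expression using parity of $\tiles$. First, setting $a=b=1$ in \eqref{eq: theo: funl, form; isolr} and \eqref{eq: theo: funl, form; jsolr} gives, for $\tiles\geq 3$,
\begin{equation*}
\isolr = \jsolr = \frac{\tiles-3}{2} + 2 = \frac{\tiles+1}{2},
\end{equation*}
whereas for $\tiles\in\{1,2\}$ the max in those equations clamps both coordinates to $2$, so \eqref{eq: theo: funl, form; isoli}--\eqref{eq: theo: funl, form; jsoli} yield $\isoli=\jsoli=2$ and \eqref{eq: theo: funl, form; funl} gives $\funl(\tiles)=0$, matching the first branch.

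Next I would split by parity of $\tiles$ to evaluate the rounding term $\roundterm$ in \eqref{eq: roundterm}. When $\tiles$ is odd, $(\tiles+1)/2$ is already an integer, so the roundings in \eqref{eq: theo: funl, form; isoli}--\eqref{eq: theo: funl, form; jsoli} leave both coordinates unchanged and $\roundterm=0$. When $\tiles$ is even, $(\tiles+1)/2$ is a half-integer and the opposite tie-breaking prescribed in the theorem produces $\isoli=(\tiles+2)/2$ and $\jsoli=\tiles/2$, giving $\roundterm=1/2$. Substituting into the neat form \eqref{eq: theo: funl, form; funl 2} (which with $a=b=1$ reads $\funl(\tiles)=\sqrt{(\tiles-3)^2/2+2\roundterm^2}$) immediately yields $(\tiles-3)/\sqrt{2}$ in the odd case and $\sqrt{(\tiles-3)^2/2+1/2}$ in the even case.

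Finally, I would perform the short algebraic check
\begin{equation*}
(\tiles-4)^2 + (\tiles-2)^2 = 2(\tiles-3)^2 + 2,
\end{equation*}
which shows that $\sqrt{(\tiles-4)^2+(\tiles-2)^2}/2$ coincides with $\sqrt{(\tiles-3)^2/2 + 1/2}$, establishing the first (piecewise) form. For the unified form \eqref{eq: theo: funt, sq, form; funl}, note that $(\tiles-3)^2$ has the same parity as $\tiles-3$, so it is even for odd $\tiles$ and odd for even $\tiles$. Hence $\lceil(\tiles-3)^2/2\rceil$ equals $(\tiles-3)^2/2$ in the odd case and $((\tiles-3)^2+1)/2$ in the even case, exactly matching the radicands computed above.

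The argument is entirely routine given Theorem~\ref{theo: funl, form}; the only point that requires care is the tie-breaking in the even case, which drives $\roundterm$ to $1/2$ rather than $0$ and is therefore the sole source of the two-branch structure in the corollary.
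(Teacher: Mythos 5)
Your proof is correct and follows exactly the route the paper intends: the corollary is stated as an immediate specialization of Theorem~\ref{theo: funl, form} to $a=b=1$, and your parity analysis (with $\roundterm=0$ for odd $\tiles$ and $\roundterm=1/2$ for even $\tiles$ via the tie-breaking) together with the identity $(\tiles-4)^2+(\tiles-2)^2=2(\tiles-3)^2+2$ supplies precisely the routine computation the paper leaves implicit.
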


Furthermore, an even simpler formula can be obtained for $\funt$, as the next theorem shows.

\begin{theorem}
\label{theo: funt, sq, form}
For a unit square grid with $\len \in \mathbb R^+$,
\begin{equation}
\label{eq: theo: funt, sq, form; funt, with i, j}
\funt(\len) = \isoli+\jsoli-1
\end{equation}
with
\begin{align}
\label{eq: theo: funt, sq, form; i}
\isoli &= \left\lceil \frac{\len}{\sqrt{2}} \right\rceil + 1, \\
\label{eq: theo: funt, sq, form; j}
\jsoli &= \left\lceil \sqrt{\len^2-(\isoli-2)^2} \right\rceil + 1,
\end{align}
which gives either $\jsoli = \isoli$ or $\jsoli = \isoli+1$. Equivalently,
\begin{equation}
\label{eq: theo: funt, sq, form; funt, simpler}
\funt(\len) = \left\lfloor \sqrt{2 \left\lceil \len^2 \right\rceil - 2} \right\rfloor + 3.
\end{equation}
\end{theorem}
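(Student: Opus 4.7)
The plan is to derive both characterizations from Corollary~\ref{theo: funl, sq, form}, invoking the inversion identity $\funt(\len) = \max\{\tiles \in \mathbb N \st \funl(\tiles) < \len\}$ from \eqref{eq: funt funl}.

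\textbf{Compact form \eqref{eq: theo: funt, sq, form; funt, simpler}.} Corollary~\ref{theo: funl, sq, form} gives $\funl(\tiles)^2 = \lceil (\tiles-3)^2/2 \rceil$ for $\tiles \geq 3$, a non-negative integer. The strict inequality $\funl(\tiles) < \len$ is therefore equivalent to $\lceil (\tiles-3)^2/2 \rceil \leq \lceil \len^2\rceil - 1$, using the elementary fact that for an integer $n$ and a positive real $x$, $n < x$ iff $n \leq \lceil x\rceil - 1$. Since $\lceil u\rceil \leq m$ iff $u \leq m$ whenever $m$ is an integer, this reduces further to $(\tiles-3)^2 \leq 2\lceil \len^2\rceil - 2$. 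The largest non-negative integer value of $\tiles - 3$ satisfying this is $\lfloor\sqrt{2\lceil \len^2\rceil - 2}\rfloor$, which yields \eqref{eq: theo: funt, sq, form; funt, simpler}. The cases $\tiles \in \{1, 2\}$ are automatically subsumed, since $\funl(1) = \funl(2) = 0 < \len$ and the derived expression is at least $3$ for every $\len > 0$.

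\textbf{Two-step form \eqref{eq: theo: funt, sq, form; i}--\eqref{eq: theo: funt, sq, form; j}.} By Proposition~\ref{prop: len ineq i j}.\ref{prop: len ineq i j: ineq, exist} with $a = b = 1$, a pair $(i, j)$ with $i, j \geq 2$ is the discrete bounding rectangle of some segment of length at most $\len$ iff $(i-2)^2 + (j-2)^2 < \len^2$; configurations with $i = 1$ or $j = 1$ are dominated by $(2, j)$ or $(i, 2)$ respectively under the same length constraint and may be ignored. Formula \eqref{eq: theo: funt, sq, form; i} then selects $\isoli$ as the largest integer with $(\isoli, \isoli)$ achievable (from $(\isoli - 2)\sqrt{2} < \len$), and \eqref{eq: theo: funt, sq, form; j} as the largest integer with $(\isoli, \jsoli)$ achievable. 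Among integer pairs on a fixed antidiagonal $i + j = s$, the quantity $(i-2)^2 + (j-2)^2$ is minimised at the integer split closest to $i = j = s/2$. For $s = 2\isoli + 2$ the minimiser is $(\isoli+1, \isoli+1)$, which is infeasible by maximality of $\isoli$; hence no pair with $i + j \geq 2\isoli + 2$ is achievable, and $\funt(\len) = \isoli + \jsoli - 1$. The same observation gives $\jsoli \leq \isoli + 1$, because $(\isoli, \isoli + 2)$ would require $(\isoli-2)^2 + \isoli^2 < \len^2$, which is strictly stronger than feasibility of $(\isoli + 1, \isoli + 1)$ (one has $2\isoli^2 - 4\isoli + 4 > 2\isoli^2 - 4\isoli + 2$).

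The main subtlety is the bookkeeping of strict versus non-strict inequalities when moving between the real quantity $\len^2$ and the integer-valued $\funl(\tiles)^2$; the identity $n < x \Leftrightarrow n \leq \lceil x\rceil - 1$ for integer $n$ is the key packaging. Everything else is routine integer algebra combined with the earlier propositions.
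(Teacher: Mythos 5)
Your derivation of the compact formula \eqref{eq: theo: funt, sq, form; funt, simpler} from Corollary~\ref{theo: funl, sq, form} via \eqref{eq: funt funl} is exactly the paper's argument, down to the integer-versus-real bookkeeping through $\lceil \len^2\rceil - 1$. For the two-step form \eqref{eq: theo: funt, sq, form; i}--\eqref{eq: theo: funt, sq, form; j}, however, you take a genuinely different route: the paper replaces the optimal set $\mss$ by a symmetrized set $\mss'$ whose lower bounding line is $j=i$ and then reruns the circle--line intersection machinery from the proof of Theorem~\ref{theo: funt, form}, whereas you argue directly that $\isoli$ is the largest $n$ with $(n,n)$ achievable, $\jsoli$ is the largest $m$ with $(\isoli,m)$ achievable, and optimality follows because on each antidiagonal $i+j=s$ the quantity $(i-2)^2+(j-2)^2$ is minimized at the balanced integer split, so infeasibility of $(\isoli+1,\isoli+1)$ kills every antidiagonal from $2\isoli+2$ upward. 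Your version is more self-contained (it needs neither Proposition~\ref{prop: min suff set, form} nor Theorem~\ref{theo: funt, form} for this part), at the cost of redoing the feasibility bookkeeping by hand; the paper buys brevity by reusing earlier machinery.

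One step needs a sentence more. From ``no pair with $i+j\geq 2\isoli+2$ is achievable'' you conclude $\funt(\len)=\isoli+\jsoli-1$, but when $\jsoli=\isoli$ this also requires that nothing on the antidiagonal $i+j=2\isoli+1$ is achievable, which your stated argument does not cover. It follows from your own antidiagonal-minimization observation: the minimizers on that antidiagonal are $(\isoli,\isoli+1)$ and $(\isoli+1,\isoli)$, and the former is infeasible precisely because $\jsoli=\isoli$ is maximal, so the whole antidiagonal is ruled out. With that sentence added, the argument is complete.
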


\begin{proof}
The proof of \eqref{eq: theo: funt, sq, form; funt, with i, j}--\eqref{eq: theo: funt, sq, form; j}
uses a variation of the set $\mss$ defined in Proposition~\ref{prop: min suff set, form} that is more suited to this situation.

For $a=b=1$, the set $\mss$ consists of points of the form $(i,i)$ and $(i,i-1)$, as is easily seen from Proposition~\ref{prop: min suff set, form}, and as illustrated in Figure~\ref{fig: pairs_bounds_1}. By symmetry, replacing each point $(i,i-1)$ by $(i-1,i)$ gives a set $\mss'$ that is also optimal. For this new set, the lower bounding line \eqref{eq: lower bound, line} can be replaced by the simpler $j=i$. The same approach followed in the proof of Theorem~\ref{theo: funt, form} can be applied here, but using this line. Thus $(\isolr, \jsolr)$ is obtained from
\begin{align}
(\isolr-2)^2 + (\jsolr-2)^2  &= \len^2, \\
\jsolr &= \isolr,
\end{align}
which gives
\begin{equation}
\label{eq: theo: funt, sq, form; proof 1}
\isolr = \jsolr = \len/\sqrt{2} + 2.
\end{equation}
As in Theorem~\ref{theo: funt, form}, $\isoli$ is obtained as $\lceil \isolr \rceil-1$; and then $\jsoli$ is chosen as the largest integer such that $(\isoli, \jsoli)$ is achievable, i.e.~its distance from $(2, 2)$ is less than $\len$. The resulting $\isoli$ and $\jsoli$ are given by \eqref{eq: theo: funt, sq, form; i} and \eqref{eq: theo: funt, sq, form; j}.

The above procedure for choosing $\jsoli$ given $\isoli = \lceil \isolr \rceil-1$ always results in $\jsoli$ being either $\isoli$ or $\isoli+1$. This can be seen as follows. If $\jsoli = \isoli$, the point $(\isoli, \jsoli)$ is closer to $(2,2)$ than $(\isolr, \jsolr)$ is, and is therefore achievable. This implies that values of $\jsoli$ smaller than $\isoli$ will never be chosen. On the other hand, $\jsoli = \isoli+2$ or larger values are not achievable, because they would produce a sum $i+j-1$ greater than $\isolr+\jsolr$, which is impossible.

The preceding analysis shows that the pair $(\isoli,\jsoli)$ is in $\mss'$ and maximizes $i+j-1$. Therefore \eqref{eq: theo: funt, sq, form; funt, with i, j} holds.

To show \eqref{eq: theo: funt, sq, form; funt, simpler}, it is first noted that for $\tiles \geq 3$ Corollary~\ref{theo: funl, sq, form} gives
\begin{equation}
\funl(\tiles) = \sqrt{\left\lceil \frac{(\tiles-3)^2} {2} \right\rceil}.
\end{equation}
According to \eqref{eq: funt funl}, $\funt(\len)$ is obtained as the largest positive integer $\tiles$ such that 
\begin{equation}
\label{eq: theo: funt, sq, form; proof 2}
\left\lceil \frac{(\tiles-3)^2} {2} \right\rceil < \len ^ 2.
\end{equation}
Since the left-hand side of \eqref{eq: theo: funt, sq, form; proof 2} is an integer, the condition of being strictly less than $\len^2$ is equivalent to
\begin{equation}
\left\lceil \frac{(\tiles-3)^2} {2} \right\rceil  \leq \left\lceil \len^2 \right\rceil  - 1,
\end{equation}
which in turn is the same as
\begin{equation}
\frac{(\tiles-3)^2} {2}\leq  \left\lceil \len^2 \right\rceil  - 1. 
\end{equation}
Solving for $\tiles$ gives
\begin{equation}
\label{eq: theo: funt, sq, form; proof 3}
\tiles \leq \sqrt{2 \left\lceil \len^2 \right\rceil -2 } + 3.
\end{equation}
The desired quantity $\funt(\len)$, that is the largest positive integer $\tiles$ satisfying \eqref{eq: theo: funt, sq, form; proof 3}, is thus the right-hand side rounded down, as given by \eqref{eq: theo: funt, sq, form; funt, simpler}.
\end{proof}

From Theorem~\ref{theo: funt, sq, form} it stems that odd values of $\funt(\len)$ correspond to $\isoli = \jsoli$, whereas even values are achieved with $\jsoli = \isoli+1$. In addition, noting that $\funt(\len) = \tiles$ is equivalent to $\funl(\tiles) < \len \leq \funl(\tiles+1)$ and using Corollary~\ref{theo: funl, sq, form} the following characterization of $\funt$ is obtained. For $\tiles \geq 3$ with $\tiles$ odd, $\funt(\len) = \tiles$ if and only if
\begin{equation}
\label{eq: square, odd max tiles, lengths}
\len \in \left( \frac{\tiles-3}{\sqrt{2}}, \frac{ \sqrt{(\tiles-3)^2+(\tiles-1)^2}}{2} \right].
\end{equation}
For $\tiles \geq 4$, $\tiles$ even, $\funt(\len) = \tiles$ if and only if
\begin{equation}
\label{eq: square, even max tiles, lengths}
\len \in \left(\frac{ \sqrt{(\tiles-4)^2+(\tiles-2)^2}}{2}, \frac{\tiles-2}{\sqrt{2}} \right].
\end{equation}

\subsection{Unit square grid with integer lengths}
\label{part: max: unit square grid, integer lengths}

A natural variation of the direct and inverse problems introduced in \S\ref{part: intro} is to consider $a=b=1$ with the additional restriction that the segment length can only be a positive integer (equivalently, the square grid has spacing $a$ and the segment lengths are restricted to integer multiples of $a$).

The direct problem in this setting corresponds to the restriction of $\funt$ to $\mathbb N$. This will be denoted as a function $\funti: \mathbb N \to \mathbb N$ for greater clarity, although obviously $\funti(\leni) = \funt(\leni)$ for all $\leni \in \mathbb N$. The sequence $\funti(\leni)$, $\leni \in \mathbb N$ takes values $3, 5, 7, 8, 9, \ldots$, and is depicted in Figure~\ref{fig: funti}. This is A346232 in the On-Line Encyclopedia of Integer Sequences \cite{OEIS_unitsq_int_dir}. For this sequence, the expression \eqref{eq: theo: funt, sq, form; funt, simpler} in Theorem~\ref{theo: funt, sq, form} simplifies in the obvious way, and the following properties hold.

\begin{figure}%
\centering%
\subfigure[Sequence $\funti$]{%
\label{fig: funti}%
\includegraphics[width=.49\textwidth]{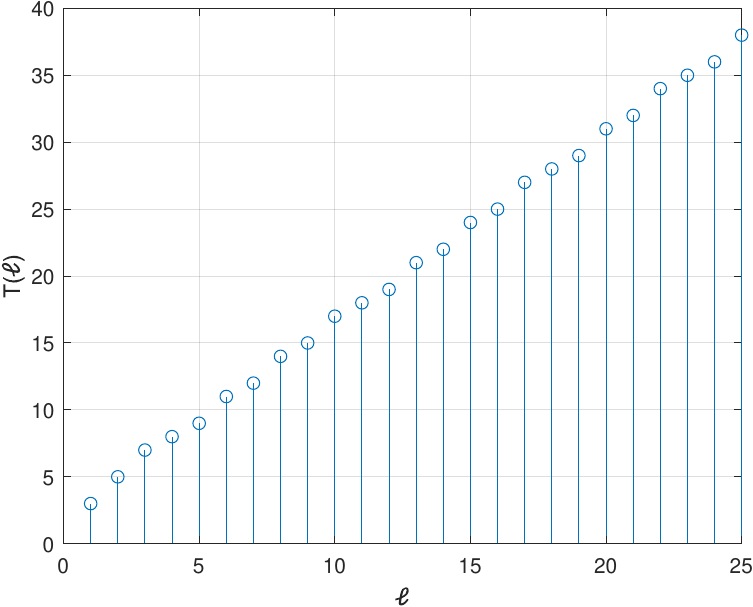}%
}\hfill%
\subfigure[Sequence $\funli$]{%
\label{fig: funli}%
\includegraphics[width=.49\textwidth]{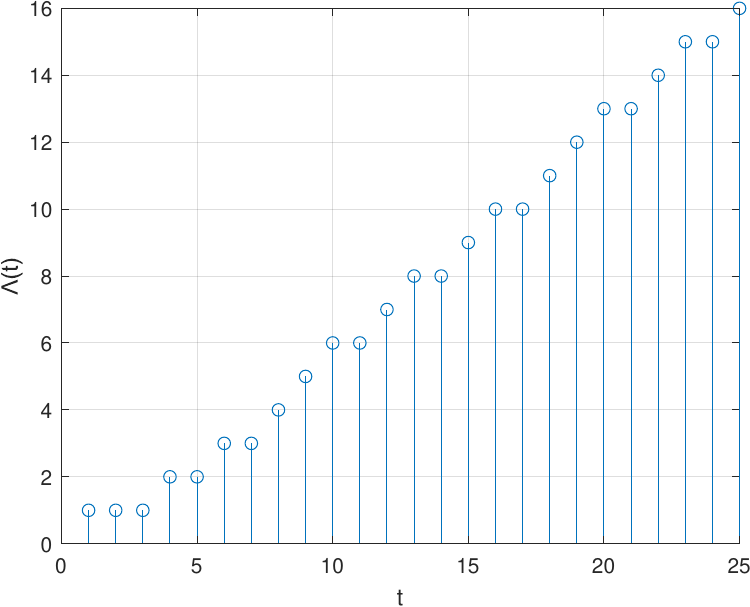}%
}%
\caption{Sequences $\funti$ and $\funli$
}%
\label{fig: funti funli}%
\end{figure}%

\begin{theorem}
\label{theo: funti}
For $\leni \in \mathbb N$,
\begin{equation}
\label{eq: funti leni}
\funti(\leni) = \left\lfloor \sqrt{2\leni^2-2} \right\rfloor + 3.
\end{equation}
In addition,
\begin{enumerate}
\item
This sequence is increasing, with $\funti(\leni+1)-\funti(\leni) \in \{1, 2\}$.
\item
There can be no more than $2$ consecutive increments equal to $1$.
\item
Increments equal to $2$ always appear isolated, except at the initial sequence terms $3, 5, 7$.
\end{enumerate}
\end{theorem}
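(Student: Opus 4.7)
The formula \eqref{eq: funti leni} will follow immediately from Theorem~\ref{theo: funt, sq, form}: substituting an integer $\len = \leni$ into \eqref{eq: theo: funt, sq, form; funt, simpler} gives $\lceil \leni^2 \rceil = \leni^2$ and yields the claimed expression. The three increment properties I would then treat together by reinterpreting the increment $\Delta(\leni) := \funti(\leni+1) - \funti(\leni) = \lfloor \sqrt{2(\leni+1)^2-2} \rfloor - \lfloor \sqrt{2\leni^2-2} \rfloor$ as a counting problem: $\Delta(\leni)$ equals the number of integers $k$ satisfying $2\leni^2 - 1 \leq k^2 \leq 2\leni^2 + 4\leni$, that is, the number of perfect squares in the interval $I_\leni := [2\leni^2 - 1, 2\leni^2 + 4\leni]$, which has integer length $4\leni + 1$.

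For property (i), to rule out three squares in $I_\leni$, I would argue that three consecutive squares $k^2, (k+1)^2, (k+2)^2$ span $(k+2)^2 - k^2 = 4k+4$, which exceeds $4\leni+1$ as soon as $k \geq \leni$; and $k^2 \geq 2\leni^2 - 1 > \leni^2$ forces $k \geq \leni + 1$ for $\leni \geq 2$, so three squares cannot fit. The case $\leni = 1$ (with $I_1 = [1, 6]$ containing only $1$ and $4$) is handled by inspection. For the lower bound, the inequality $\sqrt{2\leni^2+4\leni} - \sqrt{2\leni^2-1} > 1$ reduces after squaring to $2\leni^2 > -1$, so the interval $[\sqrt{2\leni^2-1}, \sqrt{2\leni^2+4\leni}]$ has length greater than $1$ and therefore contains some integer $k$, yielding a square $k^2 \in I_\leni$.

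Property (ii) I would obtain from the estimate $g(\leni+3) - g(\leni) \geq 4$, where $g(\leni) := \lfloor \sqrt{2\leni^2-2}\rfloor = \funti(\leni) - 3$. Rationalising gives $\sqrt{2(\leni+3)^2-2} - \sqrt{2\leni^2-2} = (12\leni+18)/(\sqrt{2(\leni+3)^2-2}+\sqrt{2\leni^2-2})$; bounding the denominator above by $\sqrt{2}(2\leni+3)$ makes this difference strictly exceed $3\sqrt{2}$, and subtracting at most $1$ for the two floor operations still leaves the integer $g(\leni+3) - g(\leni)$ strictly above $3$, hence at least $4$. So three consecutive unit increments are impossible.

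For (iii), I would suppose $\Delta(\leni) = 2$ for some $\leni \geq 2$, with $k^2$ and $(k+1)^2$ the two squares in $I_\leni$. For $\Delta(\leni+1) = 2$ the next interval $I_{\leni+1} = [2\leni^2 + 4\leni + 1, 2\leni^2 + 8\leni + 6]$ would also have to contain two squares, which can only be $(k+2)^2$ and $(k+3)^2$ since $(k+1)^2 \leq 2\leni^2 + 4\leni$. Using $k^2 \geq 2\leni^2 - 1$ to write $(k+3)^2 \geq 2\leni^2 + 6k + 8$, the requirement $(k+3)^2 \leq 2\leni^2 + 8\leni + 6$ would force $k \leq (4\leni-1)/3$, which contradicts $k \geq \sqrt{2\leni^2-1}$ since $9(2\leni^2-1) - (4\leni-1)^2 = 2(\leni-1)(\leni+5) > 0$ for $\leni \geq 2$. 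Hence $\Delta(\leni+1) = 1$, and the exceptional pair $\Delta(1) = \Delta(2) = 2$ producing $3, 5, 7$ is verified by direct computation of $g(1), g(2), g(3) = 0, 2, 4$. The most delicate step is (iii): a purely continuous estimate of the form $\sqrt{2(\leni+2)^2-2}-\sqrt{2\leni^2-2} < 4$ is not tight enough to rule out $\Delta(\leni)+\Delta(\leni+1) = 4$ after the floor is applied, so the discrete square-counting bound above is what makes the argument go through.
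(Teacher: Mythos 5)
Your proof is correct, and for the three increment properties it takes a genuinely different route from the paper. The paper works analytically: it applies the mean value theorem to $\genfun(\genvar)=\sqrt{2\genvar^2-2}$, bounds the derivative by $\sqrt{2}<\genfun'(\genvar)<3/2$ for $\genvar\geq 3$, and deduces all three properties from the resulting two-sided bound on $\sqrt{2(\leni+1)^2-2}-\sqrt{2\leni^2-2}$. You instead recast $\funti(\leni+1)-\funti(\leni)$ as the number of perfect squares in the integer interval $[2\leni^2-1,\,2\leni^2+4\leni]$ and argue combinatorially. Your approach is entirely elementary (no calculus), handles the small cases $\leni\in\{1,2\}$ more explicitly than the paper does, and for property (iii) yields the sharper statement that $\Delta(\leni)=2$ forces $\Delta(\leni+1)=1$ for all $\leni\geq 2$, whereas the paper's derivative bound only covers $\leni\geq 3$ and leaves $\leni=2$ to inspection. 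The paper's argument is shorter and reuses one estimate for everything. One remark: your closing claim that a continuous estimate cannot settle (iii) is not accurate. The paper's bound gives $\sqrt{2(\leni+2)^2-2}-\sqrt{2\leni^2-2}<3$ (not merely $<4$) for $\leni\geq 3$, since the difference decreases to its limit $2\sqrt{2}$; and $x-y<3$ already implies $\lfloor x\rfloor-\lfloor y\rfloor\leq 3$, which rules out two consecutive increments of $2$. So the continuous route does go through; your discrete square-counting bound is an alternative, not a necessity. This does not affect the validity of your proof.
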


\begin{proof}
The equality \eqref{eq: funti leni} stems from \eqref{eq: theo: funt, sq, form; funt, simpler} noting that $\len$ is an integer.

In order to prove that $\funti(\leni+1)-\funti(\leni) \in \{1,2\}$, consider the function $\genfun(\genvar) = \sqrt{2\genvar^2-2}$ for $\genvar \in \mathbb R$, $\genvar>1$. Its first derivative is
\begin{equation}
\genfun'(\genvar) = \frac {\sqrt{2} \, \genvar} {\sqrt{\genvar^2-1}},
\end{equation}
and its second derivative is easily seen to be negative. Therefore $\genfun'(\genvar)$ can be bounded for $\genvar > 3$ as
\begin{equation}
\label{eq: der bounds}
\lim_{\genvar \rightarrow \infty} \genfun'(\genvar) = \sqrt{2} < \genfun'(\genvar) < \genfun'(3) = 3/2.
\end{equation}
For $\leni \geq 3$,
by the mean value theorem \cite[section~5.3]{Abbott15}, when $\leni$ is increased to $\leni+1$ the term $\sqrt{2\leni^2-2}$ in \eqref{eq: funti leni} has an increment that equals $\genfun'(\genvar)$ for some $\leni < \genvar < \leni+1$. Therefore
\begin{equation}
\label{eq: der bounds 2}
\sqrt{2} < \sqrt{2(\leni+1)^2-2} - \sqrt{2\leni^2-2} < 3/2.
\end{equation}
Since $1 < \sqrt{2}$ and $3/2 < 2$, \eqref{eq: der bounds 2} implies that $\funti(\leni+1)-\funti(\leni)$ can only take the values $1$ or $2$ for $\leni \geq 3$. In addition, $\funti(2)-\funti(1) = \funti(3)-\funti(2) = 2$, and thus the result holds for all $\leni \in \mathbb N$.

Using the first bound in \eqref{eq: der bounds 2} three times,
\begin{equation}
3\sqrt{2} < \sqrt{2(\leni+3)^2-2} - \sqrt{2\leni^2-2}.
\end{equation}
Considering that $4 < 3\sqrt{2}$, this implies that $\funti(\leni+3)-\funti(\leni) \geq 4$ for $\leni \geq 3$. Therefore at least one of the three increments from $\funti(\leni)$ to $\funti(\leni+3)$ is $2$. Since $\funti(2)-\funti(1) = \funti(3)-\funti(2) = 2$, this result holds for all $\leni \in \mathbb N$.

Similarly, using the second bound in \eqref{eq: der bounds 2} twice,
\begin{equation}
\sqrt{2(\leni+2)^2-2} - \sqrt{2\leni^2-2} < 3,
\end{equation}
which implies that $\funti(\leni+2)-\funti(\leni) \leq 3$ for $\leni \geq 3$. Therefore the two increments $\funti(\leni+1)-\funti(\leni)$ and $\funti(\leni+2)-\funti(\leni+1)$ cannot both be $2$ for $\leni \geq 3$.
\end{proof}

The inverse problem with integer-length segments can be formulated as follows: given $\tiles \in \mathbb N$, find the \emph{minimum} integer length that allows visiting at least $\tiles$ tiles. Observe that in this case, unlike with real-valued lengths, there is indeed a minimum length, as every subset of $\mathbb N$ has a minimum. This can be expressed as a function $\funli: \mathbb N \to \mathbb N$:
\begin{equation}
\label{eq: funli funti}
\funli(\tiles) = \min\{\leni \in \mathbb N \st \funti(\leni) \geq \tiles\},
\end{equation}
which is related to the function $\funl$ for real-valued lengths by
\begin{equation}
\label{eq: funli funl}
\funli(\tiles) = \lfloor\funl(\tiles)\rfloor + 1.
\end{equation}
The converse to \eqref{eq: funli funti} is (compare to \eqref{eq: funt funl}):
\begin{equation}
\label{eq: funti funli}
\funti(\leni) = \max \{\tiles \in \mathbb N \st \funli(\tiles) \leq \leni\}.
\end{equation}
In view of \eqref{eq: funli funti} and \eqref{eq: funti funli}, $\funti$ and $\funli$ can be considered as ``pseudo-inverse'' sequences of each other.

The sequence $\funli(\tiles)$, $\tiles \in \mathbb N$ can be computed using \eqref{eq: theo: funt, sq, form; funl} and \eqref{eq: funli funl}. It has initial values $1, 1, 1, 2, 2, 3, 3, 4, 5 \ldots$, as seen in Figure~\ref{fig: funli}. This is A346693 in the On-Line Encyclopedia of Integer Sequences \cite{OEIS_unitsq_int_inv}. Moreover, a slightly simpler expression can be obtained from \eqref{eq: funti leni} and \eqref{eq: funli funti}. This is established by the next theorem, which also states some properties of $\funli$, parallel to those of $\funti$.

\begin{theorem}
\label{theo: funli}
For $\tiles \in \mathbb N$,
\begin{equation}
\label{eq: funli}
\funli(\tiles) = \begin{cases}
\displaystyle
1 & \text{for } \tiles \leq 3 \\[1.3mm] 
\displaystyle
\left \lceil \sqrt{\frac{(\tiles-3)^2} 2 + 1} \ \right \rceil & \text{for } \tiles \geq 4.
\end{cases}
\end{equation}
In addition,
\begin{enumerate}
\item
This sequence is non-decreasing. Except for the initial run of $3$ equal values, it is formed by runs of $1$ or $2$ equal values, with an increment of $1$ between consecutive runs.
\item
There can be no more than $3$ consecutive terms that are different.
\item
A run of $2$ equal values always has $2$ different terms before and $2$ different terms after the run, except for the initial terms $1, 1, 1, 2, 2, 3, 3$.
\end{enumerate}
\end{theorem}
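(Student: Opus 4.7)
The plan is to derive the closed-form formula \eqref{eq: funli} by composing the defining relation \eqref{eq: funli funti} with the explicit expression \eqref{eq: funti leni} for $\funti$, and then to translate each of the three properties of $\funti$ established in Theorem~\ref{theo: funti} into the corresponding property of $\funli$ via the pseudo-inverse relationship.

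For the formula itself, the case $\tiles \leq 3$ is immediate, since $\funti(1)=3 \geq \tiles$ and hence \eqref{eq: funli funti} gives $\funli(\tiles)=1$. For $\tiles \geq 4$ I would substitute \eqref{eq: funti leni} into the condition $\funti(\leni) \geq \tiles$ to obtain $\lfloor\sqrt{2\leni^2-2}\rfloor \geq \tiles-3$. Since $\tiles-3$ is a positive integer, this is equivalent to $\sqrt{2\leni^2-2}\geq \tiles-3$, and squaring and rearranging yields $\leni \geq \sqrt{(\tiles-3)^2/2+1}$. The smallest positive integer $\leni$ satisfying this inequality is the ceiling appearing on the right-hand side of \eqref{eq: funli}.

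For the structural properties, the key observation is that the level set $\{\tiles \in \mathbb N : \funli(\tiles)=\leni\}$ coincides with the integer interval $\{\funti(\leni-1)+1,\ldots,\funti(\leni)\}$, using the convention $\funti(0):=0$. Hence each run of $\funli$ has length exactly $\funti(\leni)-\funti(\leni-1)$, and the values of consecutive runs differ by $1$. Non-decreasingness is then immediate, and property (i) follows because the initial run at $\leni=1$ has length $\funti(1)=3$, while for $\leni\geq 2$ the increment is in $\{1,2\}$ by Theorem~\ref{theo: funti}, yielding runs of length $1$ or $2$.

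Properties (ii) and (iii) then follow by translating the analogous facts about $\funti$. Reading ``$k$ consecutive terms that are different'' as ``$k$ consecutive positions at which $\funli$ strictly exceeds its predecessor'', any such maximal block in $\funli$ is flanked by two doubles and corresponds to $k-1$ consecutive singleton runs of $\funli$, hence to $k-1$ consecutive $1$-increments of $\funti$; Theorem~\ref{theo: funti} bounds the latter by $2$, so $k\leq 3$. For property (iii), a run of length $2$ in $\funli$ at value $\leni$ arises from a $2$-increment of $\funti$ between indices $\leni-1$ and $\leni$, and by the isolation part of Theorem~\ref{theo: funti}, outside the initial exception at $\funti(1)=3$, $\funti(2)=5$, $\funti(3)=7$, such an increment is both preceded and followed by a $1$-increment; the adjacent runs of $\funli$ are then singletons, so the two positions immediately before and the two positions immediately after the run of $2$ carry different values. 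The two non-isolated initial $2$-increments of $\funti$ correspond precisely to the runs $(2,2)$ and $(3,3)$ of $\funli$ that flank the initial triple $(1,1,1)$, producing the excluded initial block $1,1,1,2,2,3,3$. The hard part will be the careful bookkeeping to match the off-by-one between singleton runs of $\funli$ and strict-increase positions, and to align the two initial exceptions correctly.
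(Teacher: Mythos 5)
Your proposal is correct and follows essentially the same route as the paper: the closed form is obtained exactly as in the paper's proof (substitute \eqref{eq: funti leni} into \eqref{eq: funli funti}, use integrality of $\tiles-3$ to drop the floor, square, and take the ceiling), and the structural properties are transferred from Theorem~\ref{theo: funti}, which the paper simply asserts ``follow directly.'' Your explicit identification of the level sets $\{\tiles : \funli(\tiles)=\leni\}$ with the intervals $\{\funti(\leni-1)+1,\ldots,\funti(\leni)\}$ supplies the bookkeeping the paper omits, and it is sound.
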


\begin{proof}
Using \eqref{eq: funti leni}, the inequality $\funti(\leni) \geq \tiles$ in \eqref{eq: funli funti} is written as
\begin{equation}
\left\lfloor \sqrt{2\leni^2-2} \right\rfloor + 3 \geq \tiles.
\end{equation}
Since the right-hand side is an integer, this is equivalent to
\begin{equation}
\label{eq: theo funli ineq}
\sqrt{2\leni^2-2} \geq \tiles-3.
\end{equation}
Assuming $\tiles \geq 4$, taking squares and rearranging gives
\begin{equation}
\leni \geq \sqrt{\frac{(\tiles-3)^2} 2 + 1},
\end{equation}
which combined with \eqref{eq: funli funti} yields the second part of \eqref{eq: funli}. The first part results from noting that for $\tiles \leq 3$ the value $\leni=1$ satisfies \eqref{eq: theo funli ineq}.

The stated properties for $\funli$ follow directly from those of $\funti$ established by Theorem~\ref{theo: funti}.
\end{proof}

\section{Random segment: probabilistic characterization}
\label{part: rand}

Given $\len \in \mathbb R^+$, consider a segment of length $\len$ with uniformly random position and orientation. Specifically, the coordinates $x_1, y_1$ of the first endpoint are independent random variables uniformly distributed on $[0,a)$ and $[0,b)$ respectively, where $a$, $b$ are the grid parameters. The orientation $\theta$ of the segment is uniformly distributed on $[0,2\pi)$. The variables $x_1$, $y_1$ and $\theta$ determine the coordinates $x_2, y_2$ of the second endpoint.

Each realization of the random segment gives rise to a discrete bounding rectangle, whose normalized dimensions $i$ and $j$ are thus random variables, as is the number $\tiles$ of visited tiles. Except for a set of realizations with probability $0$, $i$ and $j$ are at least $1$, and $\tiles = i+j-1$. Note that $i$ and $j$ are not statistically independent.

This section deals with the two problems stated in \S\ref{part: intro} for random segments, namely obtaining the average number of visited tiles and the probability of visiting the maximum possible number of tiles. Segment lengths will be assumed to be real-valued. The results to be obtained are directly applicable for integer lengths as a particular case.

Arbitrary grids are considered in \S\ref{part: probmax: arbitrary grid, real lengths}. The main result is the average number of visited tiles. The square grid is addressed in \S\ref{part: probmax: unit square grid, real lengths}. This more specialized setting allows computation of the probability of visiting the maximum number of tiles, which would be difficult in the general case.

\subsection{Arbitrary grid with real-valued lengths}
\label{part: probmax: arbitrary grid, real lengths}

Let $\funta: \mathbb R^+ \to \mathbb R^+$ be defined such that $\funta(\len)$ is the average number of tiles visited by a random segment of length $\len$, with the distributions specified in the preceding.

\begin{theorem}
\label{theo: funta, form}
Given $a, b, \len \in \mathbb R^+$, consider a grid with parameters $a, b$ and a uniformly random segment of length $\len$, as defined above. The average number of tiles visited by the segment is 
\begin{equation}
\label{eq: funta, form}
\funta(\len) = \frac{2\len}{\pi}\left(\frac 1 a + \frac 1 b\right) + 1.
\end{equation}
\end{theorem}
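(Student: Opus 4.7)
The plan is to reduce the problem to counting grid line crossings and invoke the classical Buffon-Ramaley expected crossing count. The key observation, already noted in the Introduction, is that almost surely the number of visited tiles equals one plus the number of grid line crossings. Indeed, each time the segment crosses a vertical or horizontal grid line the tile index changes by one, with the exception of crossings at an interior grid point, where two crossings occur simultaneously but only one tile is added. Since the random endpoint $(x_1,y_1)$ is uniform on the reference tile and the angle $\theta$ is independent and uniform on $[0,2\pi)$, the event that the segment passes through any fixed grid point has probability zero, and there are only countably many grid points, so almost surely no grid point lies on the segment. Letting $V$ and $H$ denote the number of vertical and horizontal grid line crossings respectively, this gives $\tiles = 1 + V + H$ almost surely, so by linearity of expectation it suffices to show $\E[V] = 2\len/(\pi a)$ and $\E[H] = 2\len/(\pi b)$.

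By the symmetry between the two coordinate directions it is enough to compute $\E[V]$. First I would condition on the orientation $\theta$. For fixed $\theta$, the horizontal extent of the segment is $\len|\cos\theta|$, and the number of vertical grid lines $x = ka$ crossed is determined by whether the interval $[x_1, x_1 + \len\cos\theta]$ (or its reversal when $\cos\theta<0$) contains integer multiples of $a$. Averaging over $x_1$ uniform on $[0,a)$ and using the elementary identity that the expected number of integer multiples of $a$ in an interval of length $L$ with uniformly positioned left endpoint equals $L/a$, one obtains $\E[V \mid \theta] = \len|\cos\theta|/a$.

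Next I would average over $\theta$ uniform on $[0,2\pi)$:
\begin{equation*}
\E[V] = \frac{1}{2\pi}\int_0^{2\pi} \frac{\len|\cos\theta|}{a}\diff\theta = \frac{\len}{\pi a}\int_0^{\pi}|\cos\theta|\diff\theta \cdot \frac{1}{1} = \frac{2\len}{\pi a},
\end{equation*}
recovering the Ramaley generalization of Buffon's needle for expected crossings. The same argument with $\sin\theta$ in place of $\cos\theta$ yields $\E[H] = 2\len/(\pi b)$.

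Combining, $\funta(\len) = \E[\tiles] = 1 + \E[V] + \E[H] = 1 + \frac{2\len}{\pi}\bigl(\frac{1}{a} + \frac{1}{b}\bigr)$, which is \eqref{eq: funta, form}. No real obstacle arises; the only subtle point is the almost-sure reduction $\tiles = 1 + V + H$, which rests on the fact that the joint distribution of $(x_1,y_1,\theta)$ is absolutely continuous while the exceptional set (segments passing through some grid point) is a countable union of lower-dimensional manifolds in $(x_1,y_1,\theta)$-space and hence has measure zero. Everything else reduces to a direct computation of two one-dimensional integrals.
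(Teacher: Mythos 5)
Your proof is correct and follows essentially the same route as the paper: reduce the tile count to $1$ plus the number of grid line crossings (discarding the measure-zero event of passing through a grid point), split the crossings by direction, and apply linearity of expectation. The only difference is that you derive the expected crossing count $2\len/(\pi a)$ directly by conditioning on $\theta$ and averaging over $x_1$, whereas the paper simply cites Ramaley's result; your inline derivation is valid.
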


\begin{proof}
Suppose first that the grid is simplified to only vertical lines with spacing $a$. This matches the set-up of Buffon's original needle problem \cite[section~1.1]{Mathai99}, except that here the length $\len$ of the needle may exceed the spacing $a$, allowing it to cross multiple lines. As shown in \cite{Ramaley69}, the expected number of lines crossed equals $2\len/(\pi a)$.

Consider again a grid with horizontal spacing $a$ and vertical spacing $b$. The grid crossings decompose into crossings of horizontal and vertical grid lines. By linearity of expectation, the expected number of crossings is the sum of the expectations for parallel lines with spacing $a$ and $b$ respectively, which gives
\[
\frac{2\len}{\pi}\left(\frac 1 a + \frac 1 b\right).
\]
As noted in Proposition~\ref{prop: i+j-1}, the number of tiles visited by a segment is the count of its grid line crossings plus $1$, unless it exactly passes through any grid points, but that occurs with probability $0$ and therefore does not affect the expected value. Thus $\funta(\len)$ is obtained by adding $1$ to the above expression, which gives \eqref{eq: funta, form}.
\end{proof}

In view of Theorem~\ref{theo: funta, form}, the average number of visited tiles as a function of the segment length has a very simple form, namely an affine function. Conversely, for any $\tiles>1$ it is immediate to compute the length of a random segment that visits $\tiles$ tiles on average, given as $\funta^{-1}(\tiles)$.

In spite of the dependence between the random variables $i$ and $j$, their marginal distributions have relatively simple analytic expressions, as established by the next proposition.

For $0 \leq \genvar \leq 1$, let
\begin{equation}
\label{eq: f}
f(\genvar) = \int_0^\genvar \arccos x \, \diff x = \genvar \arccos \genvar - \sqrt{1-\genvar^2} + 1.
\end{equation}

\begin{proposition}
\label{prop: Pr i}
Given $a, b, \len \in \mathbb R^+$, consider a grid with parameters $a, b$ and a uniformly random segment of length $\len$, as defined above. Let the random variables $i$, $j$ represent the normalized dimensions of the discrete bounding rectangle. For $n \in \mathbb N$,
\begin{equation}
\label{eq: Pr i geq n}
\Pr[i \geq n] = \begin{cases}
\displaystyle
1 &\text{if\ \ } \displaystyle n =1 \\[1 mm] 
\displaystyle
\frac{2\len}{\pi a} \left(f\left(\frac{a(n-1)}{\len}\right)-f\left(\frac{a(n-2)}{\len} \right)\right) &\text{if\ \ } \displaystyle 2 \leq n < \frac \len a + 1 \\[4 mm] 
\displaystyle
\frac{2\len}{\pi a}\,\, \left(1 - f\left(\frac{a(n-2)}{\len}\right)\right) &\text{if\ \ } \displaystyle \frac\len a + 1 \leq n < \frac\len a+2 \\[3 mm] 
\displaystyle
0 &\text{if\ \ } \displaystyle \frac \len a + 2 \leq  n;
\end{cases}
\end{equation}
and $\Pr[j \geq n]$ is given by the same expressions with $a$ replaced by $b$.
\end{proposition}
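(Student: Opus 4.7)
The plan is to condition on the orientation $\theta$, compute the conditional distribution of $i$, and then integrate $\theta$ out. The claim for $j$ will follow from the one for $i$ by symmetry: since $\theta$ is uniform on $[0,2\pi)$, the random variables $|\sin\theta|$ and $|\cos\theta|$ are identically distributed, so the derivation for $j$ is obtained from the one for $i$ by replacing $(a, x_1, \cos\theta)$ with $(b, y_1, \sin\theta)$. I therefore focus on $\Pr[i \geq n]$.

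First I would observe that since $x_1 \in [0,a)$ one has $\lfloor x_1/a \rfloor = 0$, so the normalized width of the discrete bounding rectangle is $i = \lceil (x_1 + u)/a \rceil$, where $u = \len|\cos\theta|$ is the horizontal extent of the segment. Using that $\lceil y \rceil \geq n$ iff $y > n-1$, the event $\{i \geq n\}$ coincides with $\{x_1 > a(n-1) - u\}$. Conditioning on $\theta$ and using the independence of $x_1$ (uniform on $[0,a)$) from $\theta$ yields the piecewise-linear conditional probability
\[
\Pr[i \geq n \mid \theta] = \begin{cases} 1, & u > a(n-1), \\ u/a - (n-2), & a(n-2) < u \leq a(n-1), \\ 0, & u \leq a(n-2). \end{cases}
\]
Exploiting the fourfold symmetry of $|\cos\theta|$ over $[0,2\pi)$, the unconditional probability becomes
\[
\Pr[i \geq n] = \frac{2}{\pi} \int_0^{\pi/2} \Pr[i \geq n \mid u = \len \cos\phi] \diff \phi.
\]

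Setting $p = a(n-1)/\len$, $q = a(n-2)/\len$, $\alpha = \arccos p$ (when $p \leq 1$), $\beta = \arccos q$ (when $q \leq 1$), the three nontrivial cases of the statement correspond to (i) $p < 1$, (ii) $q < 1 \leq p$, and (iii) $q \geq 1$; the case $n = 1$ is immediate since $i \geq 1$ almost surely. In case (i), the integral splits into $\int_0^\alpha 1 \diff \phi + \int_\alpha^\beta (\len\cos\phi/a - (n-2)) \diff \phi$ and evaluates to $\alpha + (\len/a)(\sin\beta - \sin\alpha) - (n-2)(\beta - \alpha)$. Using $\len p/a = n-1$, $\len q/a = n-2$, $\sin\alpha = \sqrt{1-p^2}$, $\sin\beta = \sqrt{1-q^2}$, and the key rearrangement $\alpha - (n-2)(\beta - \alpha) = (n-1)\alpha - (n-2)\beta = (\len/a)(p\arccos p - q\arccos q)$, the bracketed expression collapses to $(\len/a)[f(p) - f(q)]$ with $f$ as in \eqref{eq: f}, matching the first case of the claim. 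Case (ii) is the same computation with $\alpha$ set to $0$ and the first piece of the integral absent; the additive constant $1$ in the closed form of $f$ exactly accounts for the $1 - f(q)$ factor. The main obstacle is this algebraic matching, specifically recognizing how the bare $\alpha$ term combines with $-(n-2)(\beta - \alpha)$ to produce the $p\arccos p$ and $q\arccos q$ contributions of $f$; the remainder is bookkeeping.
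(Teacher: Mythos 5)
Your proof is correct, and it reaches the result by a genuinely different decomposition: you condition on the orientation $\theta$ first and integrate out the endpoint position, whereas the paper conditions on $x_1$ first, observes that $\Pr[i \geq n \mid x_1] $ reduces (after splitting $\{i\ge n\}$ into the two disjoint tail events $\{x_2 > a(n-1)\}$ and $\{x_2 < -a(n-2)\}$ and symmetrizing) to the arc fraction $\frac{1}{\pi}\arccos\frac{a(n-1)-x_1}{\len}$, and only then integrates over $x_1 \in [0,a)$. The two arguments are the same double integral evaluated in opposite orders; the payoff of the paper's order is that the function $f(\genvar)=\int_0^\genvar \arccos z \diff z$ appears immediately and the link to Buffon's needle (arc fraction crossing a line) is transparent, while the payoff of yours is that the inner step is an elementary piecewise-linear probability and the outer integral only involves $\int\cos\phi\diff\phi$ -- at the price of the algebraic regrouping $(n-1)\alpha-(n-2)\beta = (\len/a)(p\arccos p - q\arccos q)$ needed to recognize $f$, which you correctly identify as the crux and which does check out in all three nontrivial cases. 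One small point to make explicit: the identity $i=\lceil (x_1+u)/a\rceil$ holds literally only when the segment extends rightward ($\cos\theta\ge 0$); for $\cos\theta<0$ one has $i = 1 - \lfloor (x_1-u)/a\rfloor$, and the event $\{i\ge n\}$ becomes $\{(a-x_1)+u > a(n-1)\}$, which has the same conditional probability because $a-x_1$ is again uniform on $(0,a)$. This reflection is exactly what justifies folding the four quadrants of $\theta$ into $[0,\pi/2]$, and it plays the role of the paper's observation that the two tail events contribute equally; stating it explicitly would close the only loose seam in your write-up.
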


\begin{proof}
Clearly, $\Pr[i \geq 1]=1$. In the following it will be assumed that $n \geq 2$. The basic idea is to compute $\Pr[i \geq n]$ conditioned on $(x_1,y_1)$ (or, as will be seen, only on $x_1$), and then to average over $x_1$ and $y_1$ (actually only over $x_1$).

Given the coordinates $(x_1, y_1)$ of the first endpoint of the segment, with $0 \leq x_1 < a$, $0 \leq y_1 < b$, the second endpoint $(x_2, y_2)$ lies on a circle with radius $\len$ centered at $(x_1,y_1)$, as shown in Figure~\ref{fig: Pr_i}. The segment orientation is a random angle $\theta$ uniformly distributed on $[0,2\pi)$. It is clear from the figure that $i \geq n$ if and only if $x_2 > a(n-1)$ or $x_2 < -a(n-2)$; and for $n \geq 2$ these events are exclusive. Thus
\begin{equation}
\label{eq: Pr i cond}
\Pr[i \geq n \cond x_1, y_1] = \Pr[x_2 > a(n-1) \cond x_1, y_1] + \Pr[x_2 < -a(n-2) \cond x_1, y_1].
\end{equation}
The two conditional probabilities on the right-hand side of \eqref{eq: Pr i cond} are different in general. However, averaging over $x_1, y_1$ gives, by symmetry, $\Pr[x_2 > a(n-1)] = \Pr[x_2 < -a(n-2)]$. In addition, the coordinate $y_1$ does not have any influence on these events, and therefore conditioning on $x_1, y_1$ is the same as conditioning on $x_1$. This implies that, for $n \geq 2$,
\begin{equation}
\label{eq: Pr i = 2 Pr}
\Pr[i \geq n] = 2 \Pr[x_2 > a(n-1)].
\end{equation}

\begin{figure}
\centering%
\subfigure[$x_2$ can exceed $a(n-1)$ for all $x_1$, $0 \leq x_1 \leq a$]{%
\label{fig: Pr_i_full}%
\includegraphics[width=.58\textwidth]{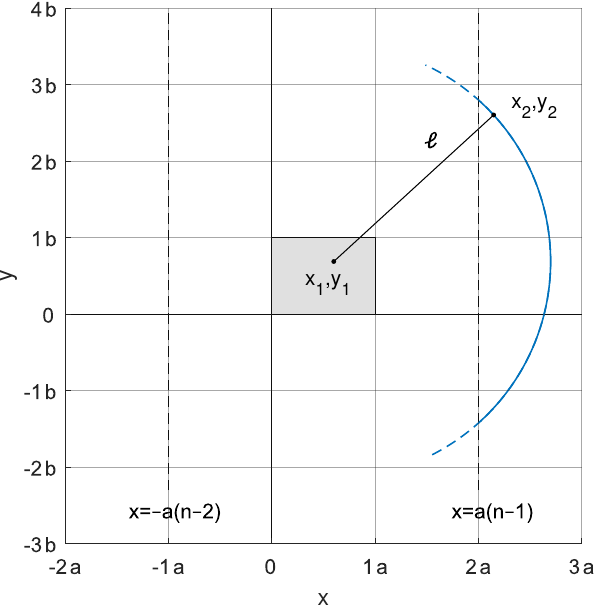}%
}\\
\subfigure[$x_2$ can exceed $a(n-1)$ only if $a(n-1)-\len \leq x_1 \leq a$]{%
\label{fig: Pr_i_notfull}%
\includegraphics[width=.58\textwidth]{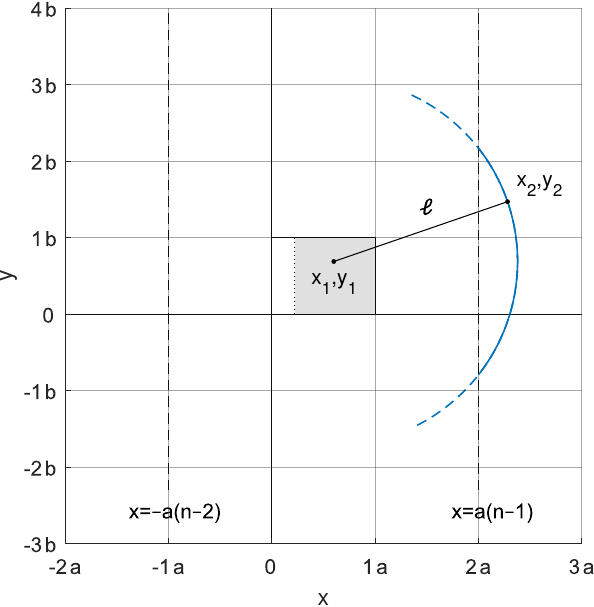}%
}%
\caption{Conditions for the normalized width of the discrete bounding rectangle, $i$, to be equal or greater than a given $n$. Example with $a=1.35$, $b=1$, $n=3$
}%
\label{fig: Pr_i}
\end{figure}%

Consider the event $x_2 > a(n-1)$ conditioned on $x_1$, with $n \geq 2$. There are three possibilities depending on $x_1$, $n$ and $\len$. If $a(n-1) < \len$, regardless of $x_1$ the length $\len$ is enough for $x_2$ to exceed $a(n-1)$ for some angles $\theta$. This is depicted in Figure~\ref{fig: Pr_i_full}, where the section of the arc with solid line represents, for a given $x_1$, those angles for which $x_2 > a(n-1)$. If $a(n-2) < \len \leq  a(n-1)$, the length will be enough provided that $x_1 > a(n-1)-\len$, and then only for certain angles. This restriction on $x_1$ corresponds to the shaded region in Figure~\ref{fig: Pr_i_notfull}. Lastly, if $\len \leq a(n-2)$ it is not possible for $x_2$ to exceed $a(n-1)$, regardless of $x_1$ or $\theta$. The figure makes it clear that the coordinate $y_1$ is irrelevant to this.

In the first two cases above, the probability that $x_2 > a(n-1)$, conditioned on $x_1$, is the length of the arc to the right of the line $x=a(n-1)$ divided by $2\pi\len$, that is,
\begin{equation}
\label{eq: Pr x_2 > a(n-1) cond}
\Pr[x_2 > a(n-1) \cond x_1] = \frac 1 \pi \arccos \frac{a(n-1)-x_1}{\len}.
\end{equation}
In the first case $x_1$ has a uniform distribution on $[0,a)$, and $\Pr[x_2 > a(n-1)]$ is easily obtained from \eqref{eq: Pr x_2 > a(n-1) cond} as
\begin{equation}
\label{eq: Pr x_2 > a(n-1), first case}
\begin{split}
\Pr[x_2 > a(n-1)] &= \E[\Pr[x_2 > a(n-1) \cond x_1]] \\
&= \frac 1 {\pi a} \int_0^a\arccos \frac{a(n-1)-x_1}{\len} \, \diff x_1 \\
&= \frac{\len}{\pi a}\,\, \left( f\left( \frac{a(n-1)}{\len} \right) - f\left( \frac{a(n-2)}{\len} \right) \right),
\end{split}
\end{equation}
where the function $f$ is defined in \eqref{eq: f}. Substituting into \eqref{eq: Pr i = 2 Pr} yields the result in \eqref{eq: Pr i geq n}, second line.

The second case is similar, but the integration over $x_1$ is from $a(n-1)-\len$ to $a$. Noting that $f(1)=1$, this gives
\begin{equation}
\label{eq: Pr x_2 > a(n-1), second case}
\Pr[x_2 > a(n-1)] = \frac{\len}{\pi a}\,\, \left( 1 - f\left( \frac{a(n-2)}{\len} \right) \right),
\end{equation}
which combined with \eqref{eq: Pr i = 2 Pr} yields the expression in \eqref{eq: Pr i geq n}, third line.

The third case obviously gives $\Pr[i \geq n] = 0$, as in \eqref{eq: Pr i geq n}, fourth line.

The above arguments can be applied to $\Pr[j \geq n]$ if the $x$ and $y$ axes are interchanged. Thus the formulas are the same with $a$ replaced by $b$.
\end{proof}

The results in Theorem~\ref{theo: funta, form} and Proposition~\ref{prop: Pr i} make clear the relationship between the problem considered here and Buffon's needle experiment, which deals with a random segment of fixed length in a regular structure of parallel strips \cite[section~1.1]{Mathai99}. Firstly, since the number $\pi$ is involved in \eqref{eq: funta, form}, it is possible to design a simple probabilistic experiment to estimate the value of $\pi$, as in Buffon's original problem. For example, choosing $a=b=\len=1$ gives $\funta(\len) = 4/\pi+1$.

Secondly, a grid with $b \rightarrow \infty$ corresponds to Buffon's arrangement of parallel strips with spacing $a$. Thus
\begin{equation}
\label{eq: number of lines crossed, Buffon}
\lim_{b \rightarrow \infty} \funta(\len)-1 = \frac{2\len}{\pi a}
\end{equation}
gives the average number of lines crossed in Buffon's experiment. For $\len \leq a$ the segment can cross at most one line, and \eqref{eq: number of lines crossed, Buffon} coincides with the probability of crossing \cite[theorem 1.1.1]{Mathai99}.

Lastly, $\Pr[i \geq 2]$ as computed in Proposition~\ref{prop: Pr i} can be interpreted as the probability of crossing at least one line in Buffon's experiment with arbitrary needle length. Indeed, for $n=2$ the third expression in \eqref{eq: Pr i geq n} reduces again to $2\len/(\pi a)$ for $\len \leq a$, whereas the second reduces to $2\len f(a/\len)/(\pi a)$ for $\len > a$. Using \eqref{eq: f} this is expressed as
\[
\frac 2 \pi \arccos\frac a \len + \frac{2\len}{\pi a} \left(1 - \sqrt{1-\frac{a^2}{\len^2}} \right),
\]
in agreement with the known result for Buffon's needle problem with $\len > a$ \cite[theorem 1.1.2]{Mathai99}. On the other hand, $\Pr[i \geq n]$ for $n \geq 3$ can be seen as the probability of crossing at least one line in a modified version of Buffon's set-up in which a needle endpoint is only allowed to move in a region of width $a$ located in the middle of a strip of width $(2n-3)a$. 

For $\len \rightarrow \infty$, the average number of visited tiles $\funta(\len)$ has the following asymptotic slope, as stems from \eqref{eq: funta, form}:
\begin{equation}
\label{eq: asympt slope ave}
\lim_{\len \rightarrow \infty} \frac{\funta(\len)}{\len} = \frac 2 \pi \left(\frac 1 a + \frac 1 b \right).
\end{equation}
On the other hand, the maximum number of visited tiles $\funt(\len)$ has an asymptotic slope given by \eqref{eq: asympt slope}. It is interesting to consider the ratio of these values, which also gives the asymptotic ratio between the average and the maximum numbers of visited tiles. This depends only on $a/b$, and is thus expressed as a function $\ras: \mathbb R^+ \to (0,1)$:
\begin{equation}
\label{eq: ras}
\ras(a/b) = \lim_{\len \rightarrow \infty}
\frac { \funta(\len) } { \funt(\len) }
= \frac
{ \lim_{\len \rightarrow \infty} (\funta(\len)/\len) }
{ \lim_{\len \rightarrow \infty} (\funt(\len)/\len) }
= \frac{2\left(1 + a/b\right)}{\pi\sqrt{1 + \left(a/b\right)^2}}.
\end{equation}
This function is represented in Figure~\ref{fig: ratio asympt slopes}. It satisfies $\ras(x) = \ras(1/x)$, and is therefore symmetric in logarithmic scale, as seen in the figure. The function $\ras$ is unimodal with maximum value $2\sqrt 2/\pi \approx 0.9003$ for $a/b=1$, and its limit when $a/b$ tends to $0$ or $\infty$ is $2/\pi \approx 0.6366$. This implies that for $\len$ large the average number of tiles $\funta(\len)$ cannot be very small compared with the maximum number of tiles $\funt(\len)$. For example, $\funta(\len)$ exceeds $0.8\funt(\len)$ asymptotically when $\len \rightarrow \infty$ for $1/3 \leq a/b \leq 3$.

\begin{figure}
\centering%
\includegraphics[width=.7\textwidth]{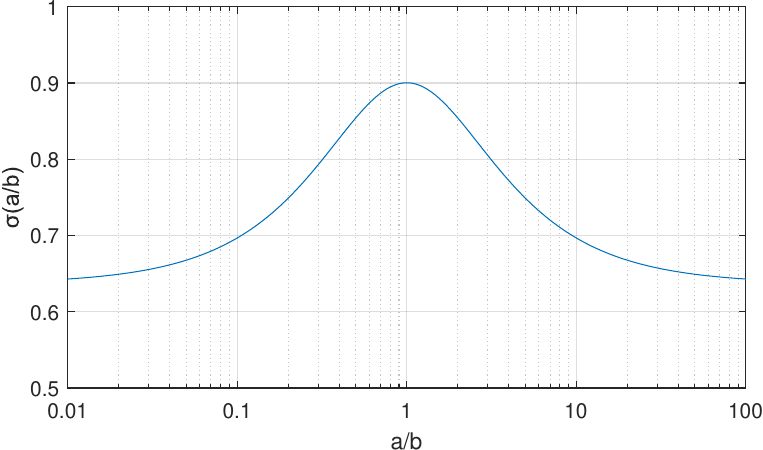}%
\caption{Asymptotic ratio of average and maximum numbers of visited tiles
}%
\label{fig: ratio asympt slopes}%
\end{figure}%

The probability of visiting the maximum number of tiles for a rectangular grid is difficult to compute. The reason is the irregularity of the relevant $(i,j)$ pairs for general $a$, $b$, analogous to that observed in \S\ref{part: fund results} for the pairs $(i,j)$ that maximize the number of visited tiles. In a square grid, however, the problem is more tractable.

\subsection{Unit square grid with real-valued lengths}
\label{part: probmax: unit square grid, real lengths}

Consider a square grid with unit spacing, $a=b=1$. Again, the results to follow can be applied to a square grid with spacing $a \neq 1$ if $\len$ is replaced by $\len/a$.

For a random segment with length $\len \in \mathbb R^+$ on a unit square grid, let the function $\probmax: \mathbb R^+ \to [0,1]$ be defined such that $\probmax(\len)$ gives the probability that the segment visits the maximum possible number of tiles, $\funt(\len)$. This function is characterized by the next theorem.

For $u, v \in \mathbb N$, $\genvar \in \mathbb R^+$, let
\begin{equation}
\label{eq: g def}
g(\genvar, u, v) = \frac 2 \pi \int_{\arcsin \frac v {\genvar}}^{\arccos \frac u {\genvar}} \left( \genvar \cos \theta - u \right) \left( \genvar \sin \theta - v \right) \, \diff \theta.
\end{equation}
Computing this integral and using the identities $\cos^2 z = 1-\sin^2 z$ and $\cos(2z) = 2\cos^2z-1 = 1 - 2\sin^2z$,
\begin{equation}
\label{eq: g fin}
\begin{split}
g(\genvar, u, v) &= \frac 1 {\pi} \biggl(
2\left(\arccos\frac{u}{\genvar}-\arcsin\frac{v}{\genvar}\right) u v + \genvar^2 + u^2 + v^2 \\
& \quad - 2 u \sqrt{\genvar^2-v^2} - 2 v \sqrt{\genvar^2-u^2} \biggr).
\end{split}
\end{equation}
Taking into account that $\arcsin x = \pi/2 - \arccos x$, it follows from \eqref{eq: g fin} that $g(\genvar, u, v) = g(\genvar, v, u)$.

\begin{theorem}
\label{theo: probmax, sq}
Given $\len \in \mathbb R^+$, consider a unit square grid and a uniformly random segment of length $\len$, as previously defined. The probability $\probmax(\len)$ that the segment visits the maximum number of tiles $\tiles= \funt(\len)$ is
\begin{equation}
\label{eq: probmax impl}
\probmax(\len) = \sum_{\substack{i,j \geq 2,\ %
i+j-1=\tiles,\\
(i-2)^2+(j-2)^2<\len^2
}} g\left(\len, i-2, j-2 \right),
\end{equation}
or equivalently, defining $w = \sqrt{\len^2-(\tiles-3)^2/2}$,
\begin{equation}
\label{eq: probmax expl}
\probmax(\len) =
\begin{cases}
\displaystyle
g\left(\len, \frac{\tiles-3} 2, \frac{\tiles-3} 2 \right) +
2 \sum_{k=1}^{\left\lceil \frac w {\sqrt{2}} - 1 \right\rceil} g\left(\len, \frac{\tiles-3} 2 - k, \frac{\tiles-3} 2 + k \right) &\text{for $\tiles$ odd} \\[6mm] 
\displaystyle
2 g\left(\len, \frac{\tiles-4} 2, \frac{\tiles-2} 2 \right) +
2 \sum_{k=1}^{\left\lceil \frac w {\sqrt{2}} - \frac 3 2 \right\rceil} g\left(\len, \frac{\tiles-4} 2 - k, \frac{\tiles-2} 2 + k \right) &\text{for $\tiles$ even}.
\end{cases}
\end{equation}
\end{theorem}

\begin{proof}
The approach is analogous to that used in the proof of Proposition~\ref{prop: Pr i}, but conditioning on the segment orientation $\theta$ instead of on the location of its first endpoint.


Without loss of generality, the first endpoint of the segment can be assumed to be contained in the tile with lower-left corner $(0,0)$, which will be called the reference tile.

Consider $\len \in \mathbb R^+$, and let $\tiles = \funt(\len)$. By Proposition~\ref{prop: i+j-1}, the segment visits $\tiles$ tiles almost surely if the dimensions $i$, $j$ of its discrete bounding rectangle are such that $i+j-1=\tiles$. In addition, a segment that visits the maximum number of tiles always has $i, j \geq 2$. Namely, if $i=1$ the segment can always be shifted and possibly tilted to cross a vertical grid line, thus increasing $i$ to $2$, without reducing $j$. The reasoning for $j=1$ is analogous. Thus for $i, j \geq 2$, $i+j-1=\tiles$ let $\probmax_{i,j}(\len)$ denote the probability that the segment has a discrete bounding rectangle with dimensions $i, j$. Then
\begin{equation}
\label{eq: probmax len probmax i,j len}
\probmax(\len) =  \sum_{i,j \geq 2,\ i+j-1=\tiles} \probmax_{i,j}(\len).
\end{equation}

For any $i, j \geq 2$ with $i+j-1=\tiles$, the segment has a discrete bounding rectangle with dimensions $i, j$ if and only if its second endpoint is in the tile with lower-left corner $(i-1,j-1)$ or in one of the other three symmetrical tiles with respect to the lines $x=1/2$ and $y=1/2$, i.e.~those with lower-left corners $(i-1,1-j)$, $(1-i,j-1)$ and $(1-i,1-j)$. An example is shown in Figure~\ref{fig: probmax_symmetric_tiles}. Consequently, $\probmax_{i,j}(\len)$ can be obtained by computing the probability that the second endpoint is in the tile with lower-left corner $(i-1,j-1)$ and multiplying by $4$.

\begin{figure}
\centering%
\includegraphics[width=.53\textwidth]{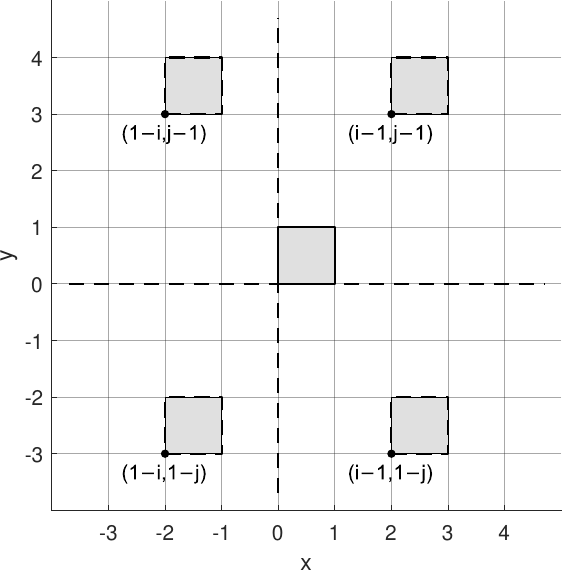}%
\caption{Symmetrical tiles for the computation of $\probmax_{i,j}(\len)$ on a unit square grid. Example for $i=3$, $j=4$%
}%
\label{fig: probmax_symmetric_tiles}%
\end{figure}%

By Proposition~\ref{prop: len ineq i j}, the tile with lower-left corner $(i-1,j-1)$ can be visited only if
\begin{equation}
\label{eq: i-2 j-2 len}
(i-2)^2+(j-2)^2<\len^2.
\end{equation}
Thus $\probmax_{i,j}(\len)=0$ if this restriction does not hold. In the following it will be assumed that it does.

For a fixed orientation $\theta$, the segment goes from the reference tile to that with lower-left corner $(i-1,j-1)$ if and only if shifting the segment up and to the right so that its first endpoint is at $(1,1)$ results in the second endpoint still being in the tile with lower-left corner $(i-1,j-1)$. This is illustrated in Figure~\ref{fig: probmax_funt_odd}. It cannot be the case that the segment ``overshoots'' past this tile, because that would yield a number of visited tiles exceeding $\tiles$, which is the maximum. In other words, only one corner of the tile can be contained within the arc in the figure, namely its lower-left corner $(i-1,j-1)$.

\begin{figure}
\centering
\includegraphics[width=.56\textwidth]{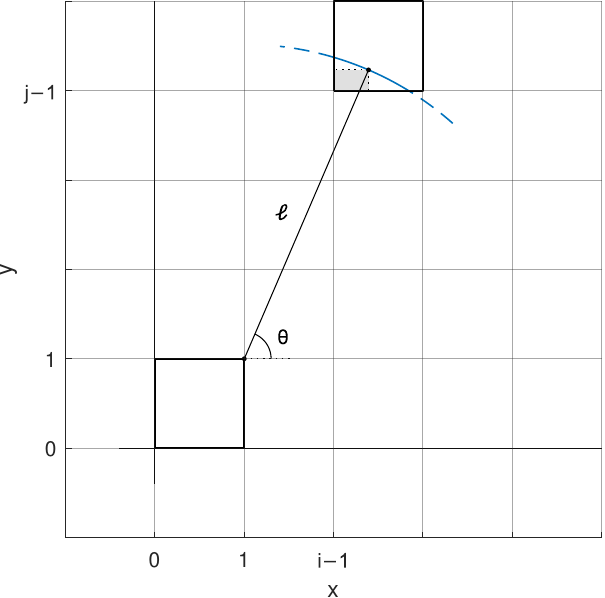}%
\caption{Segment orientations and positions for a given $\tiles = \funt(\len)$ on a unit square grid. Example for $\tiles = 7$
}%
\label{fig: probmax_funt_odd}%
\end{figure}%

There is a range of values of $\theta$ for which the above condition is satisfied. This corresponds to the part of the arc with solid line in Figure~\ref{fig: probmax_funt_odd}. For each $\theta$ in this range, the valid positions for the second endpoint of the segment are in a rectangle contained in the tile with lower-left corner $(i-1,j-1)$. These are the positions that cause the first endpoint to be in the reference tile. The rectangle associated with an example $\theta$ is shown in Figure~\ref{fig: probmax_funt_odd}, shaded. The area of this rectangle is the probability $\probmax_{i, j}(\len,\theta)$ that the segment goes from the reference tile to that with lower-left corner $(i-1,j-1)$ conditioned on $\theta$. As seen in the figure, this area is
\begin{equation}
\label{eq: probmax: probmax theta odd}
\probmax_{i, j}(\len, \theta) = \left(\len\cos\theta - i+2\right) \left(\len\sin\theta - j + 2\right).
\end{equation}
The range of allowed values for $\theta$, as can also be deduced from the figure, is $(\theta_0, \theta_1)$ with
\begin{align}
\label{eq: odd theta 0}
\theta_0 & = \arcsin\frac{j-2}{\len}, \\
\label{eq: odd theta 1}
\theta_1 & = \arccos\frac{i-2}{\len}.
\end{align}
For $\theta$ outside of this range, the conditional probability $\probmax_{i, j}(\len, \theta)$ is $0$. Averaging $\probmax_{i, j}(\len, \theta)$ over $\theta$ gives the probability that the segment visits this tile, and then multiplying by $4$ yields $\probmax_{i,j}(\len)$:
\begin{equation}
\begin{split}
\label{eq: probmax: cond on theta}
\probmax_{i,j}(\len) &= 4 \int_{\theta_0}^{\theta_1} \frac{\probmax_{i, j}(\len, \theta)}{2\pi} \, \diff \theta \\
&= \frac 2 {\pi} \int_{\arcsin\frac{j-2}{\len}}^{\arccos\frac{i-2}{\len}} \left( \len\cos\theta - i + 2 \right)  \left( \len\sin\theta - j + 2 \right) \, \diff \theta
= g( \len, i-2, j-2).
\end{split}
\end{equation}

The above arguments show that $\probmax_{i,j}(\len)$ is given by \eqref{eq: probmax: cond on theta} if the condition \eqref{eq: i-2 j-2 len} holds, and equals $0$ otherwise. Using this into \eqref{eq: probmax len probmax i,j len} yields \eqref{eq: probmax impl}.

Given $\tiles$, the range of lengths $\len$ such that $\tiles = \funt(\len)$ is either \eqref{eq: square, odd max tiles, lengths} or \eqref{eq: square, even max tiles, lengths}. For $\len$ in this range, the number of reachable tiles with lower-left corners $(i-1,j-1)$, $i,j \geq 2$, $i+j-1=\tiles$, i.e.~the number of terms in \eqref{eq: probmax impl}, depends on $\len$. If $\tiles$ is odd, the minimum number is $1$, corresponding to a tile with its lower-left corner on the line $x=y$, which gives the first term in the first expression in \eqref{eq: probmax expl}. For $\tiles$ even, the minimum number is $2$, corresponding to two symmetrical tiles whose lower-left $x$ and $y$ coordinates differ by $1$, which give the first term in the second expression. Either for $\tiles$ odd or even, additional tiles may become reachable as $\len$ grows (within the range allowed by $\tiles$), and these always occur in symmetrical pairs. These tiles (if any) correspond to the terms in the (possibly empty) sum indexed by $k$ in either expression in \eqref{eq: probmax expl}. 

Figure~\ref{fig: probmax_terms} contains examples for several values of $\tiles$, with a few values of $\len$ for each $\tiles$. Each length is represented by an arc with radius $\len$ centered at $(1,1)$. For a given $\tiles$, the filled circular markers are the lower-left corners of the tiles that are always reachable, whereas the empty circles correspond to tiles whose reachability depends on $\len$.

\begin{figure}
\centering
\includegraphics[width=.7\textwidth]{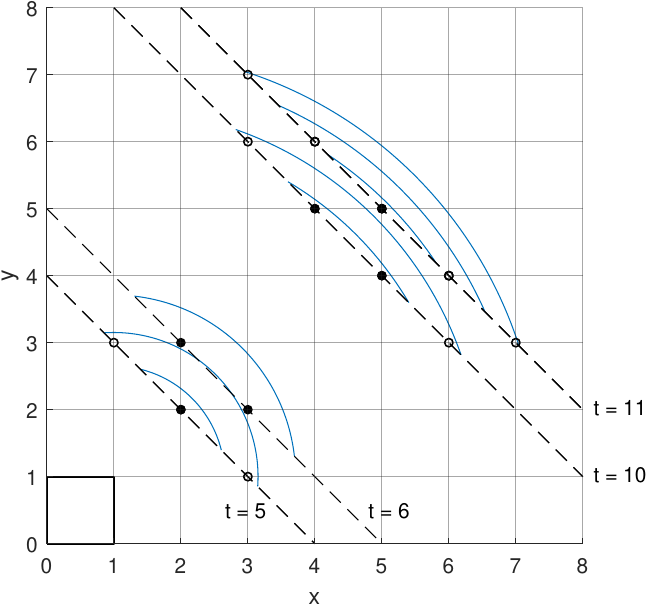}%
\caption{Tiles with lower-left corners $(i-1,j-1)$, $i+j-1=\tiles=\funt(\len)$ that can be reached from the reference tile on a unit square grid. Examples for several values of $\tiles$ and $\len$.
}%
\label{fig: probmax_terms}%
\end{figure}%

As can be seen in Figure~\ref{fig: probmax_terms}, for each $\tiles$ there is a maximum number of tiles that can be reached, beyond which incrementing $\len$ only results in $\tiles$ increasing by $1$. For a given $\len$, the number of reachable tiles can be determined by counting how many markers are covered by the stretch $w$ defined in Figure~\ref{fig: probmax_nterms}. This satisfies the relationship $w^2 = \len^2 - (\tiles-3)^2/2$. From the figure, the number of terms in the sum over $k$ (empty circular markers) is $\lceil w/\sqrt 2 - 1 \rceil$ for $\tiles$ odd and $\lceil w/\sqrt 2 - 3/2 \rceil$ for $\tiles$ even. This establishes \eqref{eq: probmax expl}.

\begin{figure}
\centering%
\subfigure[Odd $\tiles$]{%
\label{fig: probmax_nterms_odd}%
\includegraphics[width=.65\textwidth]{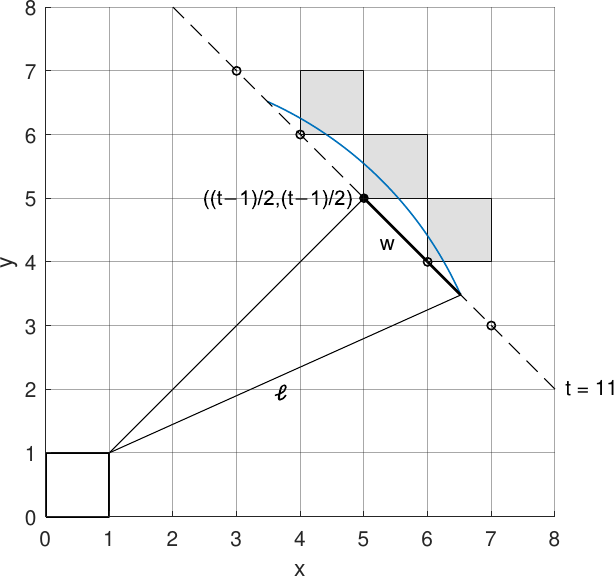}%
}\\%
\subfigure[Even $\tiles$]{%
\label{fig: probmax_nterms_even}%
\includegraphics[width=.65\textwidth]{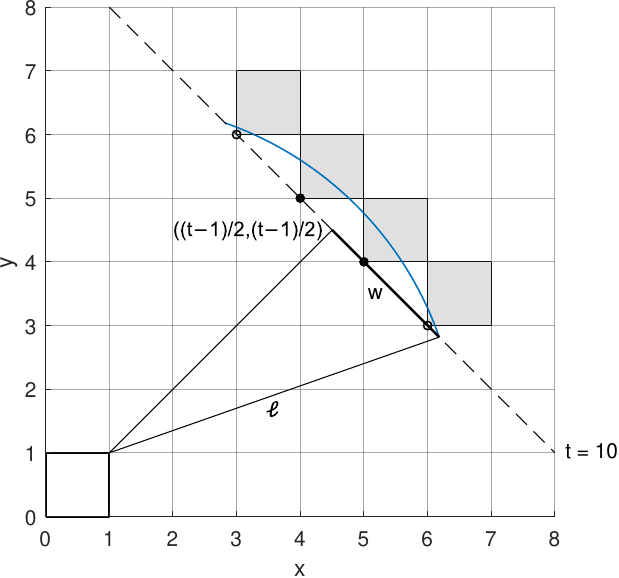}%
}%
\caption{Computation of the number of reachable tiles with lower-left corners $(i-1,j-1)$, $i+j-1=\tiles=\funt(\len)$ on a unit square grid}%
\label{fig: probmax_nterms}%
\end{figure}%

It should be noted that, in principle, the technique used in this proof could be employed for obtaining the probability that the number of tiles visited by the segment equals or exceeds any given value $\tiles < \funt(\len)$. However, the process is more cumbersome. Specifically, with reference to Figure~\ref{fig: probmax_funt_odd}, the arc in this more general setting can enclose any number of corners of the considered tile, not necessarily $1$, which makes the characterization of the shaded rectangles more complicated.
\end{proof}

The probability $\probmax(\len)$ of visiting the maximum number of tiles on a unit square grid, computed from Theorem~\ref{theo: probmax, sq}, is shown in Figure~\ref{fig: probmax_len}. As $\len$ grows, $\probmax(\len)$ has a jump when $\funt(\len)$ increases by $1$. This happens when $\len$ equals the right endpoint of the interval \eqref{eq: square, odd max tiles, lengths} for $\tiles$ odd or of the interval \eqref{eq: square, even max tiles, lengths} for $\tiles$ even. These length values will be denoted as $\len_\tiles$:
\begin{equation}
\label{eq: len tiles}
\len_\tiles = \begin{cases}
\displaystyle
\frac{\sqrt{(\tiles-3)^2 + (\tiles-1)^2}} {2} = \sqrt{\frac{(\tiles-2)^2+1}{2}} & \text{ for $\tiles$ odd, $\tiles \geq 3$} \\[2 mm] 
\displaystyle
\frac{\tiles-2}{\sqrt{2}} & \text{ for $\tiles$ even, $\tiles \geq 4$}.
\end{cases}
\end{equation}
Clearly, $\funt(\len_\tiles) = \tiles$ and $\lim_{\len \rightarrow \len_\tiles^- } \probmax(\len) = \probmax(\len_\tiles)$.

\begin{figure}%
\centering%
\includegraphics[width=.74\textwidth]{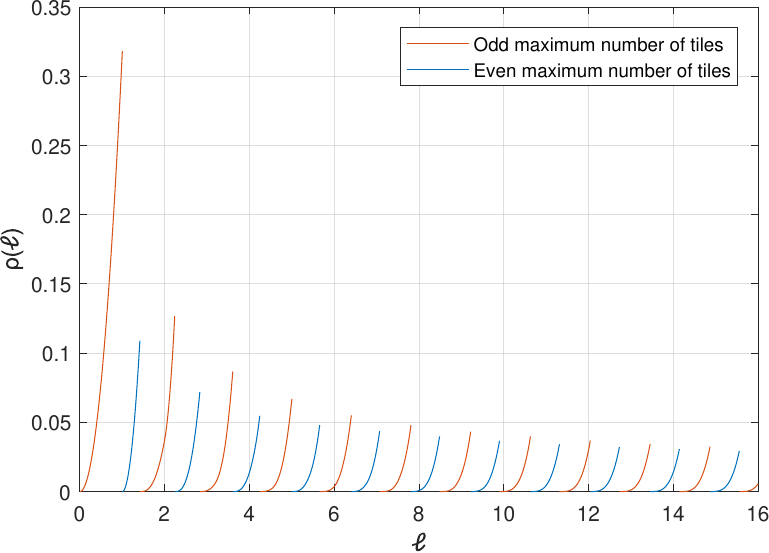}%
\caption{Probability $\probmax(\len)$ that a random segment of length $\len$ visits the maximum number of tiles on a unit square grid%
}%
\label{fig: probmax_len}%
\end{figure}%

The first continuous section seen in Figure~\ref{fig: probmax_len} corresponds to a maximum number of visited tiles $\tiles=3$, for lengths in the interval $(0, \len_3]$, where $\len_3= 1$. The second corresponds to $\tiles=4$, for lengths in $(\len_3, \len_4]$, where $\len_4= \sqrt{2}$. Within each continuous section the probability monotonically increases from $0$ to a maximum value. The heights of the maxima are asymptotically proportional to $1/\sqrt{\len_\tiles}$, as established by the next result.

\begin{proposition}
The probability $\probmax(\len)$ of visiting the maximum number of tiles on a unit square grid has the following asymptotic behaviour:
\begin{align}
\label{eq: probmax liminf}
\liminf_{\len \rightarrow \infty}\, \probmax(\len) & = 0, \\ 
\label{eq: probmax lim sqrt tiles}
\lim_{\tiles \rightarrow \infty}\, \sqrt{\tiles} \probmax(\len_\tiles) & = \frac {32\sqrt{2}}{105\pi}, \\
\label{eq: probmax limsup sqrt len}
\limsup_{\len \rightarrow \infty}\, \sqrt{\len} \probmax(\len) & = \frac {32\sqrt[4]{2}}{105\pi}.
\end{align}
\end{proposition}

\begin{proof}
The probability $\probmax(\len)$ can be made arbitrarily close to $0$ by choosing $\len$ close enough to the left endpoint of either \eqref{eq: square, odd max tiles, lengths} or \eqref{eq: square, even max tiles, lengths}. This proves \eqref{eq: probmax liminf}.

As argued in the proof of Theorem~\ref{theo: probmax, sq}, $\probmax(\len_\tiles)$ is the sum of several terms $\probmax_{i,j}(\len_\tiles) = g(\len_\tiles, i-2, j-2)$, where the sum runs over $i,j$ as given by \eqref{eq: probmax impl}. Each term is associated with one of the tiles shown in Figure~\ref{fig: probmax_nterms}, and includes a factor of $4$ to account for the symmetrical tiles in the other quadrants. For the purpose of establishing \eqref{eq: probmax lim sqrt tiles}, consider the graph in Figure~\ref{fig: probmax_nterms} with $\len= \len_\tiles$. It is convenient to rotate this graph by $45^\circ$ and shift it so that the original $x+y+1 = \tiles$ line (shown dashed in Figure~\ref{fig: probmax_nterms}) becomes coincident with the $x$ axis, and the arc center is at $(0,-(\tiles-3)/\sqrt{2})$. Figure~\ref{fig: probmax_asint_turned} shows the result, using $\tiles=9$ as an example. The circular markers in this figure correspond, in the graph before rotating, to the lower-left corners $(i-1,j-1)$ of the tiles with $i+j-1=\tiles$, $(i-2)^2+(j-1)^2<\len_\tiles$. Each such tile can be (partially) seen above its marker in Figure~\ref{fig: probmax_asint_turned}, its sides forming $45^\circ$ degrees with respect to the horizontal direction.

Given $\tiles$, let $N_\tiles$ be defined as the number of summands in \eqref{eq: probmax impl} for $\len=\len_\tiles$, and $i_\tiles^- = \min\{i \geq 2 \st (i-2)^2 + (\tiles-i-1)^2 < \len_\tiles^2\}$. For $n=1,\ldots,N_\tiles$, let
\begin{align}
\label{eq: i equiv n}
i(\tiles,n) &= i_\tiles^- + n -1, \\
\label{eq: j equiv n}
j(\tiles,n) &= \tiles+1-i(\tiles,n) = \tiles + 2 - i_\tiles^- - n.
\end{align}
Then the indices $i,j$ in \eqref{eq: probmax impl} for $\len=\len_\tiles$ can be replaced by the single index $n$:
\begin{equation}
\label{eq: probmax impl n}
\probmax(\len_\tiles) = \sum_{n=1}^{N_\tiles} \probmax_{i(\tiles,n), j(\tiles,n)}(\len_\tiles) = \sum_{n=1}^{N_\tiles} g\left(\len_\tiles, i(\tiles,n)-2, j(\tiles,n)-2 \right).
\end{equation}
The index $n = 1,\ldots,N_\tiles$  increases from left to right in Figure~\ref{fig: probmax_asint_turned}. Let $x_{\tiles, n}$ denote the $x$ coordinate of the $n$-th tile corner (circular marker) in this figure, and define
\begin{equation}
P_{\tiles,n} = \tiles \probmax_{i(\tiles,n),j(\tiles,n)}(\len_\tiles).
\end{equation}
Then, from \eqref{eq: probmax impl n},
\begin{equation}
\label{eq: sum p t n}
\tiles \probmax(\len_\tiles) = \sum_{n=1}^{N_\tiles} P_{\tiles,n}.
\end{equation}

\begin{figure}%
\centering
\includegraphics[width=.73\textwidth]{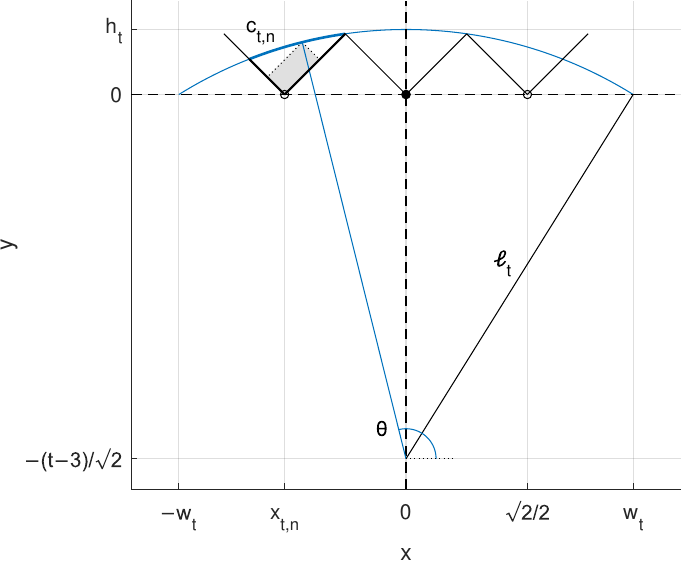}%
\caption{Reachable tiles that maximize the number of visited tiles on a unit square grid; rotated view. Example for $\tiles=9$}
\label{fig: probmax_asint_turned}%
\end{figure}%

Using the coordinate axes defined in Figure~\ref{fig: probmax_asint_turned}, the circular arc satisfies the equation $y = \funcirc(x)$ with
\begin{equation}
\label{eq: funcirc}
\funcirc(x) = h_\tiles - \len_\tiles + \sqrt{\len_\tiles^2-x^2},
\end{equation}
where the arc height $h_\tiles$ is 
\begin{equation}
\label{eq: h tiles}
h_\tiles = \len_\tiles - \frac{\tiles-3}{\sqrt{2}}.
\end{equation}
The $y$ coordinate of the arc for $x=x_{\tiles,n}$ will be denoted as $y_{\tiles,n} = \funcirc(x_{\tiles,n})$. The half-width of the arc, $w_\tiles$, is computed in the proof of Theorem~\ref{theo: probmax, sq} (or from the right triangle in Figure~\ref{fig: probmax_asint_turned}), and using \eqref{eq: len tiles} it can be expressed as
\begin{equation}
\label{eq: w}
w_\tiles = \sqrt{\len_\tiles^2-\frac{(\tiles-3)^2}{2}} =
\begin{cases}
\displaystyle
\sqrt{\tiles-2} & \text{ for $\tiles$ odd} \\
\displaystyle
\sqrt{\tiles-5/2} & \text{ for $\tiles$ even}.
\end{cases}
\end{equation}
The inequality
\begin{equation}
\label{eq: sqrt asympt}
\sqrt{1+\genvar} < 1 + \frac \genvar 2 \quad\text{for } s \geq -1, \ s \neq 0
\end{equation}
implies that
\begin{equation}
\label{eq: asympt w}
w_\tiles \leq \sqrt{\tiles-2} < \sqrt{\tiles} - 1/\sqrt\tiles.
\end{equation}
Similarly, \eqref{eq: len tiles} and \eqref{eq: sqrt asympt} yield, considering that $\tiles \geq 3$,
\begin{equation}
\label{eq: asympt len tiles}
\frac{\tiles-2}{\sqrt{2}} \leq \len_\tiles < \frac{\tiles-2}{\sqrt{2}} + \frac{1}{2\sqrt{2}(\tiles-2)}
\leq \frac{\tiles-3/2}{\sqrt{2}}.
\end{equation}
The arc height $h_\tiles$ is bounded from \eqref{eq: h tiles} and \eqref{eq: asympt len tiles} as
\begin{equation}
\label{eq: h bound}
\frac {\sqrt{2}} 2 \leq h_\tiles <
\frac{3\sqrt{2}}{4}.
\end{equation}
The number $N_\tiles$ of reachable tiles shown in the figure (circular markers) is easily obtained from $w_\tiles$ as
\begin{equation}
\label{eq: N tiles}
N_\tiles = 
\begin{cases}
\displaystyle
2\left\lceil \frac{w_\tiles}{\sqrt{2}} \right\rceil - 1 = 2\left\lceil \sqrt{\frac{\tiles-2}{2}} \right\rceil - 1 & \text{ for $\tiles$ odd} \\[5mm] 
\displaystyle
2\left\lceil \frac{w_\tiles}{\sqrt{2}} - \frac 1 2 \right\rceil = 2\left\lceil \sqrt{\frac{\tiles-5/2}{2}} - \frac 1 2 \right\rceil & \text{ for $\tiles$ even}.
\end{cases}
\end{equation}

The asymptotic analysis to be presented is based on the following observations. The half-width of the arc $w_\tiles$ grows asymptotically as $\sqrt{\tiles}$ for $\tiles \rightarrow \infty$, whereas the radius $\len_\tiles$ is asymptotically proportional to  $\tiles$, and the height $h_\tiles$ converges to $\sqrt{2}/2$. As a result, the slope $\diff \funcirc(x)/\diff x$ tends to $0$.

Consider the tile depicted with thick line in Figure~\ref{fig: probmax_asint_turned}, for a generic value of the index $n$. For this tile, $P_{\tiles,n}/t = \probmax_{i(\tiles,n),j(\tiles,n)}(\len_\tiles)$ can be obtained as the area of the shaded rectangle in the figure averaged over $\theta$ uniformly distributed on $[0,2\pi)$, and then multiplied by $4$, in the same way as in the proof of Theorem~\ref{theo: probmax, sq}. Equivalently, the average can be restricted to the range of values of $\theta$ for which the shaded area is non-zero, corresponding to the arc with thick line in the figure, of length $c_{\tiles,n}$, and the result multiplied by $c_{\tiles,n}/(2\pi\len_\tiles)$. Denoting the average over this restricted range by $A_{\tiles,n}$,
\begin{equation}
\label{eq: P i j asympt 1}
P_{\tiles,n} = \frac{2 \tiles A_{\tiles,n} c_{\tiles,n}}{\pi\len_\tiles}.
\end{equation}
Taking into account that $A_{\tiles,n}$ and $c_{\tiles,n}$ are upper-bounded by the area of a tile and one quarter of the length of a circle of radius $1$, respectively,
\begin{align}
\label{eq: A tiles n bound}
0 &< A_{\tiles,n} < 1, \\
\label{eq: c tiles n bound}
0 &< c_{\tiles,n} \leq \frac \pi 2,
\end{align}
and using \eqref{eq: asympt len tiles},
\begin{align}
\label{eq: P i j asympt 2 1}
P_{\tiles,n}
&\leq \left(1 + \frac 2 {\tiles-2} \right) \frac{2 \sqrt{2} A_{\tiles,n} c_{\tiles,n}}{\pi} < \frac{2\sqrt{2} A_{\tiles,n} c_{\tiles,n}}{\pi} + \frac{2\sqrt{2}}{\tiles-2}, \\
\label{eq: P i j asympt 2 2}
P_{\tiles,n}
&\geq \left(1 + \frac {3/2} {\tiles-3/2} \right) \frac{2 \sqrt{2} A_{\tiles,n} c_{\tiles,n}}{\pi} > \frac{2 \sqrt{2} A_{\tiles,n} c_{\tiles,n}}{\pi}.
\end{align}

Instead of calculating  $A_{\tiles,n}$ and $c_{\tiles,n}$ directly, it is easier to compute approximate versions of these, based on replacing the arc by its chord, as depicted in Figure~\ref{fig: probmax_asint_detail}. The difference between the approximate and exact values for each variable will be bounded from below and from above by functions of $\tiles$ that tend to $0$ as $\tiles \rightarrow \infty$. Regarding $c_{\tiles,n}$, the chord length, denoted as $\tilde c_{\tiles,n}$, is obtained from Figure~\ref{fig: probmax_asint_turned} as
\begin{equation}
\tilde c_{\tiles,n} =  2\len_\tiles \sin\frac{c_{\tiles,n}}{2\len_\tiles}. 
\end{equation}
Taking into account that $\sin\genvar > \genvar - {\genvar^3}/6$ for $s > 0$ and using \eqref{eq: asympt len tiles},
\begin{equation}
\label{eq: tilde c bound diff}
0 > \tilde c_{\tiles,n} - c_{\tiles,n} > -\frac{c_{\tiles,n}^3}{24\len_\tiles^2} \geq -\frac{\pi^3}{192\len_\tiles^2} \geq -\frac {\pi^3} {96(\tiles-2)^2}.
\end{equation}

\begin{figure}
\centering
\includegraphics[width=.82\textwidth]{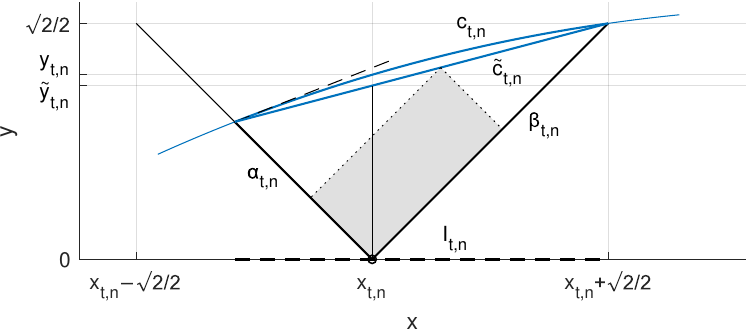}%
\caption{Computation of $P_{\tiles,n}$}%
\label{fig: probmax_asint_detail}%
\end{figure}%

As for $A_{\tiles,n}$, let $\tilde A_{\tiles,n}$ denote its approximate version where the upper corner of the shaded rectangle in Figure~\ref{fig: probmax_asint_detail} is uniformly distributed on the chord rather than on the arc. This approximation entails two types of errors: the distribution of the angle $\theta$ is no longer uniform; and the shaded rectangle becomes smaller because its corner is on the chord, not on the arc. It is convenient to describe the two sources of error in terms of, respectively, the horizontal and vertical coordinates of the upper corner of the shaded rectangle. More specifically, let $x$ and $y$ denote the coordinates of the corner along the arc, and let $\intervx_{\tiles,n}$ denote the interval of values of $x$ corresponding to the projection of the arc (or of the chord) onto the horizontal axis. This interval is shown in Figure~\ref{fig: probmax_asint_detail} with dashed, thick line. Firstly, in the exact case the distribution of $x$ is non-uniform on $\intervx_{\tiles,n}$ (corresponding to a uniform distribution of $\theta$), whereas the approximation implies that $x$ is uniformly distributed on $\intervx_{\tiles,n}$. The corresponding probability density functions will be respectively denoted as $\fdp_{\tiles,n}(x)$ and $\tilde \fdp_{\tiles,n}(x)$. Secondly, for a given $x$, the vertical coordinate in the exact case, $y$ (determined by the arc), is replaced by a smaller value $\tilde y$ in the approximation (chord). Let $R_{\tiles,n}(x)$ and $\tilde R_{\tiles,n}(x)$ denote the area of the shaded rectangle that has its upper corner on the arc and on the chord respectively, as a function of $x$. Then
\begin{align}
\label{eq: A int R fdp x}
A_{\tiles,n} &= \int_{\intervx_{\tiles,n}} R_{\tiles,n}(x) \fdp_{\tiles,n}(x) \, \diff x, \\
\label{eq: tilde A int R fdp x}
\tilde A_{\tiles,n} &= \int_{\intervx_{\tiles,n}} \tilde R_{\tiles,n}(x) \tilde \fdp_{\tiles,n}(x) \, \diff x.
\end{align}

Consider the first type of error in the approximation of $A_{\tiles,n}$, which arises from using $\tilde \fdp_{\tiles,n}(x)$ instead of $\fdp_{\tiles,n}(x)$. In the following it will be assumed that $\tiles \geq 4$. It stems from Figure~\ref{fig: probmax_asint_turned} that any point $(x,y)$ on the full arc defined by \eqref{eq: funcirc} satisfies the equality $\cot \theta = \sqrt{2} x /(\tiles-3)$. Therefore
\begin{equation}
\label{eq: d x d theta}
\frac{\diff x}{\diff \theta} = -\frac{\tiles-3}{\sqrt{2} \sin^2 \theta}.
\end{equation}
Along this arc, with $x \in [-w_\tiles, w_\tiles]$, the minimum of $|\sin \theta|$ is attained when $x$ equals $w_\tiles$ or $-w_\tiles$. Thus, using \eqref{eq: asympt len tiles},
\begin{equation}
\label{eq: sin theta bound}
1 \geq |\sin \theta| \geq \frac{\tiles-3}{\sqrt{2}\len_\tiles} > \frac{\tiles-3}{\tiles-3/2}.
\end{equation}
In the exact case $\theta$ is uniformly distributed along the arc shown with thick line in Figure~\ref{fig: probmax_asint_turned}, and the probability density function of $x$ can be obtained using the transformation theorem for continuous random variables \cite[section~5.2]{Papoulis91}:
\begin{equation}
\label{eq: fdp x}
\fdp_{\tiles,n}(x) = \frac {\len_\tiles} {c_{\tiles,n} |\diff x / \diff \theta|} \quad\text{for } x \in \intervx_{\tiles,n}.
\end{equation}
It follows from \eqref{eq: d x d theta}--\eqref{eq: fdp x} that
\begin{equation}
\label{eq: max min tilde pdf}
\frac {\max_{x \in \intervx_{\tiles,n}} \fdp_{\tiles,n}(x)} {\min_{x \in \intervx_{\tiles,n}} \fdp_{\tiles,n}(x)} < \left(\frac{\tiles-3/2}{\tiles-3}\right)^2.
\end{equation}
Since $\fdp_{\tiles,n}(x)$ and $\tilde\fdp_{\tiles,n}(x)$ are probability density functions, it cannot be the case that one of them is greater than the other for all values of $x$. In addition, both functions have the same support, namely $\intervx_{\tiles,n}$, on which $\tilde \fdp_{\tiles,n}(x)$ is constant. This implies that, for $x \in \intervx_{\tiles,n}$ and $\tiles \geq 4$,
\begin{align}
\label{eq: tilde pdf pdf bound >}
\frac {\tilde \fdp_{\tiles,n}(x)} {\fdp_{\tiles,n}(x)} &> \left(\frac{\tiles-3}{\tiles-3/2}\right)^2 = \left(1-\frac{3/2}{\tiles-3/2}\right)^2 > 1 - \frac 3 {\tiles-3/2}, \\
\label{eq: tilde pdf pdf bound <}
\frac {\tilde \fdp_{\tiles,n}(x)} {\fdp_{\tiles,n}(x)} &< \left(\frac{\tiles-3/2}{\tiles-3}\right)^2 = \left(1+\frac{3/2}{\tiles-3}\right)^2 \leq 1 + \frac {21/4} {\tiles-3}.
\end{align}

To analyze the second type of error in the approximation of $A_{\tiles,n}$, caused by using $\tilde R_{\tiles,n}(x)$ instead of $R_{\tiles,n}(x)$, consider the tangent line displayed in Figure~\ref{fig: probmax_asint_detail} (dashed line). This makes it easy to see that for a given $x$ the difference between the vertical coordinate of the arc, $y$, and that of the chord, $\tilde y$, is less than the chord length, $\tilde c_{\tiles,n}$, multiplied by the maximum of the absolute value of the arc slope. The former is at most $\sqrt{2}$. The latter is upper-bounded by the value of $|\diff \funcirc(x)/\diff x|$ at $x=w_\tiles$ or $x=-w_\tiles$. From \eqref{eq: funcirc}, \eqref{eq: w} and \eqref{eq: sqrt asympt}, making use of the assumption that $\tiles \geq 4$,
\begin{equation}
\label{eq: asympt slope bound}
\begin{split}
\left| \frac{\diff \funcirc(x)}{\diff x} \right| &= \frac{|x|}{\sqrt{\len_\tiles^2-x^2}} \leq \frac{w_\tiles}{\sqrt{\len_\tiles^2-w_\tiles^2}} \leq \frac{\sqrt{2(\tiles-2)}}{\tiles-3} \\
& < \sqrt{\frac 2 {\tiles-3}} \left(1 + \frac 1 {2(\tiles-3)} \right) \leq \frac 3 2 \sqrt{\frac 2 {\tiles-3}},
\end{split}
\end{equation}
and thus
\begin{equation}
\label{eq: y: arc - chord}
0 \leq y - \tilde y < \frac 3 {\sqrt{\tiles-3}}.
\end{equation}
This reduction from $y$ to $\tilde y$ translates into a difference smaller than $3 / \sqrt{2(\tiles-3)}$ in each dimension of the shaded rectangle; or, since those dimensions are at most $1$, a difference in area smaller than twice that value:.
\begin{equation}
\label{eq: R: arc - chord}
0 \leq R_{\tiles,n}(x) - \tilde R_{\tiles,n}(x) < \frac{3\sqrt{2}} {\sqrt{\tiles-3}}.
\end{equation}

Combining \eqref{eq: tilde pdf pdf bound >} and \eqref{eq: R: arc - chord} with \eqref{eq: tilde A int R fdp x}, and using \eqref{eq: A tiles n bound} and \eqref{eq: A int R fdp x}, $\tilde A_{\tiles,n} - A_{\tiles,n}$ can be bounded for $\tiles \geq 4$ as
\begin{equation}
\label{eq: tilde A bound diff 1}
\begin{split}
\tilde A_{\tiles,n} - A_{\tiles,n} &= \int_{\intervx_{\tiles,n}} \tilde R_{\tiles,n}(x) \tilde \fdp_{\tiles,n}(x) \, \diff x - A_{\tiles,n} \\
&> \left( 1 - \frac 3 {\tiles-3/2} \right) \bigintssss_{\intervx_{\tiles,n}} \biggl( R_{\tiles,n}(x) - \frac{3\sqrt{2}} {\sqrt{\tiles-3}} \biggr) \fdp_{\tiles,n}(x) \, \diff x - A_{\tiles,n} \\
&> \left( 1 - \frac 3 {\tiles-3/2} \right) \biggl( A_{\tiles,n} - \frac{3\sqrt{2}}{\sqrt{\tiles-3}} \biggr) - A_{\tiles,n} > -\frac 3 {\tiles-3/2} - \frac{3\sqrt{2}}{\sqrt{\tiles-3}}.
\end{split}
\end{equation}
Likewise, with \eqref{eq: tilde pdf pdf bound <} instead of \eqref{eq: tilde pdf pdf bound >},
\begin{equation}
\label{eq: tilde A bound diff 2}
\tilde A_{\tiles,n} - A_{\tiles,n} \leq \left( 1 + \frac {21/4} {\tiles-3} \right) \int_{\intervx_{\tiles,n}} R_{\tiles,n}(x) \fdp_{\tiles,n}(x) \, \diff x - A_{\tiles,n} = \frac {21/4} {\tiles-3}\, A_{\tiles,n} < \frac {21/4} {\tiles-3}.
\end{equation}

The difference between $\tilde A_{\tiles,n} \tilde c_{\tiles,n}$ and $A_{\tiles,n} c_{\tiles,n}$ is bounded as follows:
\begin{equation}
\begin{split}
\label{eq: tilde A c bound diff 1}
|\tilde A_{\tiles,n} \tilde c_{\tiles,n} - A_{\tiles,n} c_{\tiles,n}|
&= |A_{\tiles,n} (\tilde c_{\tiles,n} - c_{\tiles,n}) + (\tilde A_{\tiles,n} - A_{\tiles,n}) \tilde c_{\tiles,n}| \\
&\leq A_{\tiles,n} \left|\tilde c_{\tiles,n} - c_{\tiles,n} \right| + \tilde c_{\tiles,n} |\tilde A_{\tiles,n} - A_{\tiles,n}|.
\end{split}
\end{equation}
Using \eqref{eq: A tiles n bound}, \eqref{eq: c tiles n bound}, \eqref{eq: tilde c bound diff}, \eqref{eq: tilde A bound diff 1} and \eqref{eq: tilde A bound diff 2} into \eqref{eq: tilde A c bound diff 1},
\begin{equation}
\begin{split}
\label{eq: tilde A c bound diff 2}
|\tilde A_{\tiles,n} \tilde c_{\tiles,n} - A_{\tiles,n} c_{\tiles,n}|
&< \frac {\pi^3} {96(\tiles-2)^2} + \frac{\pi}{2} \max\left\{
\frac 3 {\tiles-3/2} + \frac{3\sqrt{2}}{\sqrt{\tiles-3}}, \frac {21/4} {\tiles-3}
\right\}.
\end{split}
\end{equation}
From \eqref{eq: P i j asympt 2 1}, \eqref{eq: P i j asympt 2 2} and \eqref{eq: tilde A c bound diff 2} it stems that
\begin{equation}
\label{eq: P i j asympt 3 bis}
P_{\tiles,n} = \frac{2\sqrt{2} \tilde A_{\tiles,n} \tilde c_{\tiles,n}}{\pi} + E_{\tiles, n}
\end{equation}
with $|E_{\tiles, n}| < e(\tiles)$, where the function $e$ depends only on $\tiles$ and tends to $0$ as $\tiles \rightarrow 0$.

Let $\tilde y_{\tiles,n}$ denote the $y$ coordinate of the chord for $x = x_{\tiles,n}$, as shown in Figure~\ref{fig: probmax_asint_detail}, and let $m_{\tiles,n}$ denote the slope of the chord. By construction $|m_{\tiles,n}|<1$. Consider the lengths $\lra_{\tiles,n}$ and $\lrb_{\tiles,n}$ defined in the figure. Then
\begin{equation}
\frac{\tilde y_{\tiles,n}-\lra_{\tiles,n}/\sqrt{2}}{\lra_{\tiles,n}/\sqrt{2}} = m_{\tiles,n},
\end{equation}
which gives $\lra_{\tiles,n} = \sqrt{2}\tilde y_{\tiles,n}/(1+m_{\tiles,n})$. Likewise, $\lrb_{\tiles,n} = \sqrt{2}\tilde y_{\tiles,n}/(1-m_{\tiles,n})$. With the upper corner of the shaded rectangle uniformly distributed along the chord, its average area $\tilde A_{\tiles,n}$ is easily computed in terms of $\lra_{\tiles,n}$ and $\lrb_{\tiles,n}$, and therefore of $\tilde y_{\tiles,n}$ and $m_{\tiles,n}$:
\begin{equation}
\label{eq: tilde A y}
\tilde A_{\tiles,n} = \lra_{\tiles,n}\lrb_{\tiles,n} \int_0^1 \eta(1-\eta)\, \diff \eta = \frac{\lra_{\tiles,n}\lrb_{\tiles,n}} 6 = \frac{\tilde y_{\tiles,n}^2}{3(1-m_{\tiles,n}^2)}.
\end{equation}
Similarly,
\begin{equation}
\label{eq: tilde c y}
\tilde c_{\tiles,n} = \sqrt{\lra_{\tiles,n}^2+\lrb_{\tiles,n}^2} = \frac {2 \tilde y_{\tiles,n} \sqrt{1+m_{\tiles,n}^2}} {1-m_{\tiles,n}^2}.
\end{equation}
Substituting \eqref{eq: tilde A y} and \eqref{eq: tilde c y} into \eqref{eq: P i j asympt 3 bis},
\begin{equation}
\label{eq: P i j asympt 4}
P_{\tiles,n}
= \frac{4\sqrt{2} \sqrt{1+m_{\tiles,n}^2}} {3\pi(1-m_{\tiles,n}^2)^2}\, \tilde y_{\tiles,n}^3 + E_{\tiles, n}.
\end{equation}
The inequality \eqref{eq: asympt slope bound} applies in particular to $|m_{\tiles,n}|$, and gives $m_{\tiles,n}^2 < 9/(2(\tiles-3))$. For simplicity, assume $\tiles \geq 8$ so that this bound is less than $1$. Then, making use of \eqref{eq: sqrt asympt}, the term depending on $m_{\tiles,n}^2$ in \eqref{eq: P i j asympt 4} satisfies
\begin{equation}
\label{eq: P i j asympt 4 bis}
\frac{\sqrt{1+m_{\tiles, n}^2}} {(1-m_{\tiles, n}^2)^2} \leq \frac{(\tiles-3)^2}{(\tiles-15/2)^2} \left(1+\frac {9/4} {\tiles-3}\right) = 1 + \frac{9(5\tiles - 24)}{(2\tiles-15)^2}.
\end{equation}
From \eqref{eq: h bound},
\begin{equation}
\label{eq: P i j asympt 4 ter}
\tilde y_{\tiles,n} < y_{\tiles,n} \leq h_\tiles \leq \frac{3\sqrt{2}} 4.
\end{equation}
Using \eqref{eq: P i j asympt 4 bis} into \eqref{eq: P i j asympt 4} and taking into account \eqref{eq: P i j asympt 4 ter},
\begin{equation}
\label{eq: P i j asympt 5}
P_{\tiles,n} = \frac{4\sqrt{2}} {3\pi} \tilde y_{\tiles,n}^3 + E_{\tiles,n} + E'_{\tiles,n}
\end{equation}
with $|E'_{\tiles,n}| < e'(t)$, where $e'(t)$ tends to $0$ as $t \rightarrow \infty$.

Let $\funcircesc_\tiles$ be defined as a horizontally scaled version of $\funcirc$, where the variable $x \in [-w_\tiles, w_\tiles]$ is replaced by $\zeta = x/w_\tiles \in [-1,1]$:
\begin{equation}
\label{eq: funcircest tiles 1}
\funcircesc_\tiles(\zeta) = \funcirc\left(w_\tiles \zeta\right) = h_\tiles - \len_\tiles + \sqrt{\len_\tiles^2-w^2_\tiles\zeta^2}.
\end{equation}
As $\tiles \rightarrow \infty$, the sequence $\funcircesc_\tiles$ converges pointwise to the function
\begin{equation}
\label{eq: funcircesc}
\funcircesc(\zeta) = \lim_{\tiles \rightarrow \infty} \funcircesc_\tiles(\zeta) = \frac{1 -\zeta^2}{\sqrt{2}}.
\end{equation}
Furthermore, the convergence is uniform. To show this, it is easier to analyze the subsequences with $\tiles$ odd and even separately. For $\tiles$ odd, substituting $\len_\tiles$, $h_\tiles$  and $w_\tiles$ given by \eqref{eq: len tiles}, \eqref{eq: h tiles} and \eqref{eq: w} into \eqref{eq: funcircest tiles 1},
\begin{equation}
\label{eq: funcircesc, odd 1}
\funcircesc_\tiles(\zeta) =
-\frac{\tiles-3}{\sqrt{2}} + \sqrt{\frac{(\tiles-2)^2 - 2(\tiles-2)\zeta^2 +1 }{2}}.
\end{equation}
Computing $\partial \funcircesc_\tiles(\zeta)/\partial\tiles$ as if $\tiles$ were a continuous variable, and taking into account that $\zeta \in [-1,1]$,
\begin{equation}
\label{eq: der funcircesc, odd 1}
\frac{\partial \funcircesc_\tiles(\zeta)}{\partial \tiles} = \frac 1 {\sqrt{2}} \left(
-1 + \frac{\tiles-2-\zeta^2}{\sqrt{(\tiles-2)^2 - 2(\tiles-2)\zeta^2 + 1}} \right) \leq 0.
\end{equation}
This implies that the subsequence $\funcircesc_\tiles$ with $\tiles$ odd is monotone non-increasing. Since the limit function $\funcircesc$ is continuous, by Dini's theorem \cite[section~1.1]{Hirsch99} the convergence of this subsequence is uniform. An analogous argument establishes the uniform convergence of the subsequence for $\tiles$ even.
Therefore, the full sequence $\funcircesc_\tiles$ converges to $\funcircesc$ uniformly on $[-1,1]$.

The inequality \eqref{eq: y: arc - chord} applies, for $\tiles \geq 4$, to $y_{\tiles,n} = \funcircesc_\tiles(x_{\tiles,n}/w_\tiles)$ and $\tilde y_{\tiles,n}$. Together with \eqref{eq: P i j asympt 4 ter}, and making use of the uniform convergence of $\funcircesc_\tiles$ to $\funcircesc$, 
this implies that
\begin{equation}
\label{eq: funcircest tiles 5}
\tilde y_{\tiles,n}^3 = \funcircesc^3\left( \frac{x_{\tiles,n}} {w_\tiles} \right) + E''_{\tiles,n}
\end{equation}
with $|E''_{\tiles,n}| < e''(t)$, where $e''(\tiles)$ tends to $0$ as $\tiles \rightarrow \infty$.

From \eqref{eq: sum p t n}, \eqref{eq: P i j asympt 5} and \eqref{eq: funcircest tiles 5},
\begin{equation}
\label{eq: sum p t n aprox 1}
\tiles \probmax(\len_\tiles) = \frac{4 \sqrt{2}} {3\pi} \sum_{n=1}^{N_\tiles} \funcircesc^3 \left( \frac{x_{\tiles,n}}{w_\tiles} \right) + \sum_{n=1}^{N_\tiles} \left(E_{\tiles,n} + E'_{\tiles,n} + \frac{4 \sqrt{2}} {3\pi} E''_{\tiles,n} \right),
\end{equation}
which can be rewritten as
\begin{equation}
\label{eq: sum p t n aprox 2}
\sqrt{\tiles} \probmax(\len_\tiles) = \frac{4 w_\tiles} {3\pi\sqrt{\tiles}} \frac {\sqrt{2}} {w_\tiles} \sum_{n=1}^{N_\tiles} \funcircesc^3 \left( \frac{x_{\tiles,n}}{w_\tiles} \right)
+ \frac{{N_\tiles}} {\sqrt{t}} \frac 1 {N_\tiles} \sum_{n=1}^{N_\tiles} \left(E_{\tiles,n} + E'_{\tiles,n} + \frac{4 \sqrt{2}} {3\pi} E''_{\tiles,n} \right).
\end{equation}
It is clear from Figure~\ref{fig: probmax_asint_turned} that
\begin{alignat}{2}
-w_\tiles &< x_{\tiles,1} && \leq -w_\tiles + \sqrt{2}, \\
w_\tiles - \sqrt{2} &\leq x_{\tiles,N_\tiles} && < w_\tiles.
\end{alignat}
Thus the values $x_{\tiles,n}/w_\tiles$, $n=1,\ldots,N_\tiles$ are equispaced with step $\sqrt{2}/w_\tiles$. In addition, $x_{\tiles,1} / w_\tiles$ and $x_{\tiles,N_\tiles} / w_\tiles$ tend to $-1$ and $1$ respectively as $\tiles \rightarrow \infty$. In consequence, the term
\[
\frac{\sqrt{2}}{w_\tiles} \sum_{n=1}^{N_\tiles} \funcircesc^3 \left( \frac{x_{\tiles,n}}{w_\tiles} \right)
\]
in \eqref{eq: sum p t n aprox 2} can be interpreted as a Riemann sum that approximates the integral of $\funcircesc^3(\zeta)$ over $[-1,1]$. Since this function is continuous the sum indeed converges to the integral \cite[section~7.2]{Abbott15}. On the other hand,
\begin{equation}
\left| \frac 1 {N_\tiles} \sum_{n=1}^{N_\tiles} \left(E_{\tiles,n} + E'_{\tiles,n} + \frac{4 \sqrt{2}} {3\pi} E''_{\tiles,n} \right) \right| < e(\tiles)+e'(\tiles) + \frac{4 \sqrt{2}} {3\pi} e''(\tiles)
\end{equation}
and the right-hand side tends to $0$ as $\tiles \rightarrow \infty$. The terms $w_\tiles/\sqrt{\tiles}$ and $N_\tiles/\sqrt{\tiles}$ in \eqref{eq: sum p t n aprox 2} tend to $1$ and $\sqrt{2}$ respectively, according to \eqref{eq: w} and \eqref{eq: N tiles}. Thus, taking limits and substituting \eqref{eq: funcircesc},
\begin{equation}
\label{eq: sum p t n lim}
\lim_{\tiles \rightarrow \infty} \sqrt{\tiles} \probmax(\len_\tiles) =
\frac{4} {3\pi} \int_{-1}^1 \funcircesc^3(\zeta) \,\diff \zeta = \frac{2\sqrt{2}} {3\pi} \int_{0}^1 (1-\zeta^2)^3 \,\diff \zeta = \frac{32\sqrt{2}} {105\pi},
\end{equation}
which establishes \eqref{eq: probmax lim sqrt tiles}.

Lastly, \eqref{eq: probmax limsup sqrt len} is obtained from \eqref{eq: probmax lim sqrt tiles} by observing that $\limsup_{\len \rightarrow \infty} \sqrt{\len} \probmax(\len) = \lim_{\tiles \rightarrow \infty} \sqrt{\len_\tiles} \probmax(\len_\tiles)$ and that $ \lim_{\tiles \rightarrow \infty} \len_\tiles/\tiles = 1 / \sqrt{2}$.
\end{proof}


\providecommand{\noopsort}[1]{}

\end{document}